\documentclass[12pt]{amsart}
\newcommand{\deleted}[1]{}
\newcommand{\delete}[1]{}
\newcommand{\mynotes}[1]{}
\newcommand\notes[1]{}
\usepackage{txfonts}
\usepackage{amssymb}
\usepackage{eucal}
\usepackage{graphicx}
\usepackage{amsmath}
\usepackage{amscd}
\usepackage[all]{xy}           

\usepackage{amsfonts,latexsym}
\usepackage{xspace}
\usepackage{epsfig}
\usepackage{float}
\usepackage{color}
\usepackage{fancybox}
\usepackage{colordvi}
\usepackage{multicol}
\usepackage{tikz}

\usepackage{wasysym}
\usepackage{xspace}
\usepackage[colorlinks,final,backref=page,hyperindex,hypertex]{hyperref}

\usepackage[active]{srcltx} 

\topmargin -.8cm \textheight 22.8cm \oddsidemargin 0cm \evensidemargin -0cm \textwidth 16.3cm

\newcommand\changed[1]{#1}
\changed{}

\newtheorem{theorem}{Theorem}[section]
\newtheorem{lemma}[theorem]{Lemma}
\newtheorem{coro}[theorem]{Corollary}
\newtheorem{problem}[theorem]{Problem}

\newtheorem{prop}[theorem]{Proposition}
\theoremstyle{definition}
\newtheorem{defn}[theorem]{Definition}
\newtheorem{remark}[theorem]{Remark}
\newtheorem{exam}[theorem]{Example}

\deleted{\newtheorem{theorem}{Theorem}[section]
\newtheorem{prop-def}{Proposition-Definition}[section]
\newtheorem{coro-def}{Corollary-Definition}[section]

}


\newcommand{\nc}{\newcommand}
\nc{\tred}[1]{\textcolor{red}{#1}} \nc{\tblue}[1]{\textcolor{blue}{#1}} \nc{\tgreen}[1]{\textcolor{green}{#1}} \nc{\tpurple}[1]{\textcolor{purple}{#1}} \nc{\btred}[1]{\textcolor{red}{\bf #1}} \nc{\btblue}[1]{\textcolor{blue}{\bf #1}} \nc{\btgreen}[1]{\textcolor{green}{\bf #1}} \nc{\btpurple}[1]{\textcolor{purple}{\bf #1}}

\renewcommand{\Bbb}{\mathbb}


\newcommand{\efootnote}[1]{}

\newcommand\wyscco[1]{}
\renewcommand{\textbf}[1]{}

\nc{\mlabel}[1]{\label{#1}}  
\nc{\mcite}[1]{\cite{#1}}  
\nc{\mref}[1]{\ref{#1}}  
\nc{\mbibitem}[1]{\bibitem{#1}} 

\delete{
\nc{\mlabel}[1]{\label{#1}{\hfill \hspace{1cm}{\bf{{\ }\hfill(#1)}}}}
\nc{\mcite}[1]{\cite{#1}{{\bf{{\ }(#1)}}}}  
\nc{\mref}[1]{\ref{#1}{{\bf{{\ }(#1)}}}}  
\nc{\mbibitem}[1]{\bibitem[\bf #1]{#1}} 
}

\renewcommand\geq{\geqslant}
\renewcommand\leq{\leqslant}

\renewcommand\bar[1]{\overline{#1}}
\renewcommand\tilde[1]{\widetilde{#1}}

\nc\kdot{\bfk}
\nc\simple{simple\xspace}


\nc{\rbw}{\mathfrak{R}} \nc{\brp}{\mathrm{brp}} \nc{\lead}{\mathrm{Lead}} \nc{\Id}{\mathrm{Id}} \nc{\Irr}{\mathrm{Irr}} \nc{\vx}{\sigma} \nc{\vy}{\tau} \nc{\dvx}{\sigma^{(1)}} \nc{\dvy}{\tau^{(1)}} \nc{\done}{\vep} \nc{\citep}[1]{\cite{#1}} \nc{\wt}{\mathrm{wt}} \nc{\bre}[1]{|#1|} \nc{\mapmonoid}{\frakM} \nc{\disjoint}{\frakM'}
\nc{\ncpoly}[1]{\langle #1\rangle}  
\nc{\mapm}[1]{\frakM(#1)}
\nc{\diff}[1]{{}^\NC\{ #1 \}} \nc{\disj}[1]{\{{#1}\}'} \nc{\mdisj}[1]{\frakM'(#1)} \nc{\brho}{\bar{\rho}} \nc{\om}{\bar{\frakm}} \nc{\frakn}{\mathfrak n} \nc{\ddeg}[1]{^{(#1)}} \nc{\opset}{X} \nc{\genset}{{Z}} \nc{\NC}{\mathrm{{NC}}} \nc{\leaf}{\mathrm{leaf}} \nc{\twig}{\mathrm{twig}} \nc{\fe}{\mathrm{fl}} \nc{\munderline}[1]{#1} \nc{\bo}{o} \nc{\dep}{\mathrm{dep}} \nc{\ofe}{\mathrm{ofl}} \nc{\dfe}{\mathrm{dfe}} \nc{\fex}{\mathrm{fex}} \nc{\dl}{\mathrm{dlex}} \nc{\db}{\mathrm{db}} \nc{\lex}{\mathrm{lex}} \nc{\clex}{\mathrm{clex}} \nc{\dgp}{\mathrm{dgp}} \nc{\dgx}{\mathrm{dgx}} \nc{\br}{\mathrm{br}} \nc{\obd}{\mathrm{odb}} \nc{\ob}{\mathrm{ob}}
\nc{\loc}{location\xspace}
\nc{\occ}{occurrence\xspace}
\nc{\occs}{occurrences\xspace}
\nc{\pla}{placement\xspace}
\nc{\plas}{placements\xspace}

\nc{\bin}[2]{ (_{\stackrel{\scs{#1}}{\scs{#2}}})}  
\nc{\binc}[2]{ \left (\!\! \begin{array}{c} \scs{#1}\\
    \scs{#2} \end{array}\!\! \right )}  
\nc{\bincc}[2]{  \left ( {\scs{#1} \atop
    \vspace{-1cm}\scs{#2}} \right )}  
\nc{\bs}{\bar{S}} \nc{\cosum}{\sqsubset} \nc{\la}{\longrightarrow} \nc{\rar}{\rightarrow} \nc{\dar}{\downarrow} \nc{\dprod}{**} \nc{\dap}[1]{\downarrow \rlap{$\scriptstyle{#1}$}} \nc{\md}{\mathrm{dth}} \nc{\uap}[1]{\uparrow \rlap{$\scriptstyle{#1}$}} \nc{\defeq}{\stackrel{\rm def}{=}} \nc{\disp}[1]{\displaystyle{#1}} \nc{\dotcup}{\ \displaystyle{\bigcup^\bullet}\ } \nc{\gzeta}{\bar{\zeta}} \nc{\hcm}{\ \hat{,}\ } \nc{\hts}{\hat{\otimes}} \nc{\barot}{{\otimes}} \nc{\free}[1]{\widetilde{#1}} \nc{\uni}[1]{\tilde{#1}} \nc{\hcirc}{\hat{\circ}} \nc{\leng}{\ell} \nc{\lleft}{[} \nc{\lright}{]} \nc{\lc}{\lfloor} \nc{\rc}{\rfloor}
\nc{\lb}{[} 
\nc{\rb}{]} 
\nc{\curlyl}{\left \{ \begin{array}{c} {} \\ {} \end{array}
    \right.  \!\!\!\!\!\!\!}
\nc{\curlyr}{ \!\!\!\!\!\!\!
    \left. \begin{array}{c} {} \\ {} \end{array}
    \right \} }
\nc{\longmid}{\left | \begin{array}{c} {} \\ {} \end{array}
    \right. \!\!\!\!\!\!\!}
\nc{\onetree}{\bullet} \nc{\ora}[1]{\stackrel{#1}{\rar}}
\nc{\ola}[1]{\stackrel{#1}{\la}}
\nc{\ot}{\otimes} \nc{\mot}{{{\boxtimes\,}}} \nc{\otm}{\overline{\boxtimes}} \nc{\sprod}{\bullet} \nc{\scs}[1]{\scriptstyle{#1}} \nc{\mrm}[1]{{\rm #1}} \nc{\msum}{\sum\limits}
\nc{\margin}[1]{\marginpar{\rm #1}}   
\nc{\dirlim}{\displaystyle{\lim_{\longrightarrow}}\,} \nc{\invlim}{\displaystyle{\lim_{\longleftarrow}}\,} \nc{\mvp}{\vspace{0.3cm}} \nc{\tk}{^{(k)}} \nc{\tp}{^\prime} \nc{\ttp}{^{\prime\prime}} \nc{\svp}{\vspace{2cm}} \nc{\vp}{\vspace{8cm}} \nc{\proofbegin}{\noindent{\bf Proof: }}
\nc{\proofend}{$\blacksquare$ \vspace{0.3cm}}
\nc{\modg}[1]{\!<\!\!{#1}\!\!>}
\nc{\intg}[1]{F_C(#1)} \nc{\lmodg}{\!<\!\!} \nc{\rmodg}{\!\!>\!} \nc{\cpi}{\widehat{\Pi}}
\nc{\sha}{{\mbox{\cyr X}}}  
\nc{\shap}{{\mbox{\cyrs X}}} 
\nc{\shpr}{\diamond}    
\nc{\shp}{\ast} \nc{\shplus}{\shpr^+}
\nc{\shprc}{\shpr_c}    
\nc{\msh}{\ast} \nc{\zprod}{m_0} \nc{\oprod}{m_1} \nc{\vep}{\varepsilon} \nc{\labs}{\mid\!} \nc{\rabs}{\!\mid}
\nc{\astarrow}{\overset{\raisebox{-2pt}{{\scriptsize $\ast$}}}{\rightarrow}}
\nc{\astlarrow}{\overset{\raisebox{-2pt}{{\scriptsize $\ast$}}}{\longrightarrow}}
\nc{\lastarrow}{\overset{\raisebox{-2pt}{{\scriptsize $\ast$}}}{\leftarrow}}
\nc{\mastarrow}[1]{\overset{\raisebox{-2pt}{{\scriptsize $#1$}}}{\rightarrow}}
\nc{\quvarrow}[3]{#1 \overset{q,u,v}{\longrightarrow}_{#3} #2}
\nc{\quvkto}[1]{f_{#1} \overset{q_{#1}, u_{#1}, v_{#1}}{\longrightarrow}_\phi g_{#1}}
\nc{\tvarrow}[3]{#1 \overset{(t,v)}{\longrightarrow}_{#3} #2}
\nc{\Supp}{{\rm Supp}}
\nc{\mpu}{u^{\ast}}
\nc{\mpv}{v^{\ast}}
\nc{\mpw}{w^{\ast}}
\nc{\mpx}{x^{\ast}}
\nc{\dps}{\dotplus}


\nc{\dth}{d} \nc{\mmbox}[1]{\mbox{\ #1\ }} \nc{\fp}{\mrm{FP}} \nc{\rchar}{\mrm{char}} \nc{\Fil}{\mrm{Fil}} \nc{\Mor}{Mor\xspace} \nc{\gmzvs}{gMZV\xspace} \nc{\gmzv}{gMZV\xspace} \nc{\mzv}{MZV\xspace} \nc{\mzvs}{MZVs\xspace} \nc{\Hom}{\mrm{Hom}} \nc{\id}{\mrm{id}} \nc{\im}{\mrm{im}} \nc{\incl}{\mrm{incl}} \nc{\map}{\mrm{Map}} \nc{\mchar}{\rm char} \nc{\nz}{\rm NZ} \nc{\supp}{\mathrm Supp}
\nc{\mo}{\mathbf o}
\nc{\pl}{\mathfrak{p}}

\nc{\Alg}{\mathbf{Alg}} \nc{\Bax}{\mathbf{Bax}} \nc{\bff}{\mathbf f} \nc{\bfk}{{\bf k}} \nc{\bfone}{{\bf 1}} \nc{\bfx}{\mathbf x} \nc{\bfy}{\mathbf y}
\nc{\base}[1]{\bfone^{\otimes ({#1}+1)}} 
\nc{\Cat}{\mathbf{Cat}} \delete{}
\nc{\detail}{\marginpar{\bf More detail}
    \noindent{\bf Need more detail!}
    \svp}
\nc{\Int}{\mathbf{Int}} \nc{\Mon}{\mathbf{Mon}}
\nc{\rbtm}{{shuffle }} \nc{\rbto}{{Rota-Baxter }} \nc{\remarks}{\noindent{\bf Remarks: }} \nc{\Rings}{\mathbf{Rings}} \nc{\Sets}{\mathbf{Sets}}
\nc{\vwpt}{{Let $V$ be a free $\bfk$-module with a $\bfk$-basis $W$ and let $\Pi$ be a \simple term-rewriting system on $V$ with respect to $W$.}\xspace}

\nc{\BA}{{\Bbb A}} \nc{\CC}{{\Bbb C}} \nc{\DD}{{\Bbb D}} \nc{\EE}{{\Bbb E}} \nc{\FF}{{\Bbb F}} \nc{\GG}{{\Bbb G}} \nc{\HH}{{\Bbb H}} \nc{\LL}{{\Bbb L}} \nc{\NN}{{\Bbb N}} \nc{\KK}{{\Bbb K}} \nc{\QQ}{{\Bbb Q}} \nc{\RR}{{\Bbb R}} \nc{\TT}{{\Bbb T}} \nc{\VV}{{\Bbb V}} \nc{\ZZ}{{\Bbb Z}}


\nc{\cala}{{\mathcal A}} \nc{\calc}{{\mathcal C}} \nc{\cald}{{\mathcal D}} \nc{\cale}{{\mathcal E}} \nc{\calf}{{\mathcal F}} \nc{\calg}{{\mathcal G}} \nc{\calh}{{\mathcal H}} \nc{\cali}{{\mathcal I}} \nc{\call}{{\mathcal L}} \nc{\calm}{{\mathcal M}} \nc{\caln}{{\mathcal N}} \nc{\calo}{{\mathcal O}} \nc{\calp}{{\mathcal P}} \nc{\calr}{{\mathcal R}} \nc{\cals}{{\mathcal S}} \nc{\calt}{{\mathcal T}} \nc{\calw}{{\mathcal W}}
\nc{\calv}{{\mathcal V}}
\nc{\calk}{{\mathcal K}} \nc{\calx}{{\mathcal X}} \nc{\CA}{\mathcal{A}}

\nc{\fraka}{{\mathfrak a}} \nc{\frakA}{{\mathfrak A}} \nc{\frakb}{{\mathfrak b}} \nc{\frakB}{{\mathfrak B}} \nc{\frakC}{{\mathfrak C}}
\nc{\frakD}{{\mathfrak D}} \nc{\frakH}{{\mathfrak H}} \nc{\frakM}{{\mathfrak M}} \nc{\bfrakM}{\overline{\frakM}} \nc{\frakm}{{\mathfrak m}} \nc{\frkP}{{\mathfrak P}}
\nc{\frakN}{{\mathfrak N}} \nc{\frakp}{{\mathfrak p}} \nc{\fraku}{{\mathfrak u}} \nc{\frakv}{{\mathfrak v}}
\nc{\frakQ}{{\mathfrak Q}}\nc{\frakR}{{\mathfrak R}} \nc{\frakS}{{\mathfrak S}}
\nc{\frakx}{{\mathfrak x}} \nc{\ox}{\bar{\frakx}} \nc{\frakX}{{\mathfrak X}} \nc{\fraky}{{\mathfrak y}}
\nc\dop{\delta}
\nc{\Reduce}{{\rm Red}}

\font\cyr=wncyr10 \font\cyrs=wncyr7
\nc{\redt}[1]{\textcolor{red}{#1}}

\nc{\li}[1]{\textcolor{red}{Li:#1}} 
\nc{\lio}[1]{}
\nc{\sz}[1]{\textcolor{green}{sz:#1}}
\nc{\szo}[1]{}
\nc{\xing}[1]{\textcolor{purple}{Xing:#1}}
\nc{\ws}[1]{\textcolor{blue}{{#1}}} 
\nc{\wsc}[1]{\textcolor{blue}{ws:#1}} 
\nc{\wsco}[1]{}
\nc{\wsn}[1]{\textcolor{magenta}{#1}} 


\nc{\medmid}{{\,~{\tiny \longmid}~\,}}
 \nc{\lbar}[1]{\overline{#1}}
\nc{\anf}{\bf $\phi$-NF} \nc{\lto}{\longrightarrow}
\nc{\gs}{Gr\"{o}bner-Shirshov\xspace}
\nc{\Gs}{Gr\"{o}bner-Shirshov\xspace}
\nc{\pgs}{potentially Gr\"{o}bner-Shirshov\xspace}
\nc{\Pgs}{Potentially Gr\"{o}bner-Shirshov\xspace}
\nc{\egs}{essentially Gr\"obner-Shirshov\xspace}
\nc{\cs}{convergent\xspace}
\nc{\pcs}{potentially convergent\xspace}
\nc{\ecs}{essentially convergent\xspace}

\nc{\brw}{\frakM(Z)} \nc{\irr}{{\rm Irr}} \nc{\pis}{\Pi_S}
\nc{\term}{term\xspace} \nc{\Term}{Term\xspace}
\nc{\re}[1]{R(#1)} \nc{\sumre}[2]{R^{#1}_{#2}}
\nc{\stars}[2]{#1|_{#2}}
\nc{\nbfk}{\bfk^{\times}} \nc{\revise}[1]{\textcolor{blue}{#1}}
\nc{\ord}{{\rm ord}} \nc{\tpi}{\to_{\Pi}}
\nc{\fix}[1]{\tilde{#1}}

\begin{document}
\title[Rota's Classification Problem, rewriting systems and Gr\"obner-Shirshov bases]{Rota's Classification Problem, rewriting systems and
Gr\"obner-Shirshov bases}

\author{Xing Gao} \address{School of Mathematics and Statistics,
Key Laboratory of Applied Mathematics and Complex Systems,
Lanzhou University, Lanzhou, 730000, P.R. China}
\email{gaoxing@lzu.edu.cn}

\author{Li Guo}\address{Department of Mathematics and Computer Science,
Rutgers University, Newark, NJ 07102, USA}
\email{liguo@rutgers.edu}

\hyphenpenalty=8000
\date{\today}

\begin{abstract}
In this paper we revisit Rota's Classification Problem on classifying algebraic identities for linear operator. We reformulate Rota's Classification Problem in the contexts of rewriting systems and Gr\"{o}bner-Shirshov bases,
through which Rota's Classification Problem amounts to the classification of operators, given by their defining operator identities, that give convergent rewriting systems or Gr\"obner-Shirshov bases.
Relationship is established between the reformulations in terms of rewriting systems and that of Gr\"{o}bner-Shirshov bases.
We provide an effective condition that gives \gs  operators and obtain a new class of \gs  operators.
\end{abstract}

\subjclass[2010]{16W99, 13P10, 16S15, 12H05,
08A70 
16S20 
16R99 
}

\keywords{
Rota's Classification Problem, linear operators, operator identities, Gr\"ober-Shirshov bases, term rewriting systems, normal forms, free objects.
}

\maketitle
\vspace{-1.2cm}

\tableofcontents

\vspace{-1.3cm}

\allowdisplaybreaks


\section{Introduction}

\subsection{Motivation}
Motivated by the important roles played by various linear operators in the study of mathematics through their actions on objects,  Rota~\cite{Ro2} posed the problem of

\begin{quote}
finding all possible \underline{algebraic identities} that can be satisfied by a linear operator on an \underline{algebra},
\label{qu:rota}
\end{quote}
henceforth called {\bf Rota's Classification Problem}.

Operator identities that were interested to Rota included
\begin{eqnarray}
 \text{Endomorphism operator} &\quad&
d(xy)=d(x)d(y), \nonumber\\
 \text{Differential operator} &\quad&
d(xy)=d(x)y+xd(y), \nonumber\\
 \text{Average operator} &\quad&
P(x)P(y)=P(xP(y)),  \nonumber\\
\text{Inverse average operator} &\quad& P(x)P(y)=P(P(x)y),
\nonumber\\
 \text{(Rota-)Baxter operator of weight\ $\lambda$} &\quad&
 P(x)P(y)=P(xP(y)+P(x)y+\lambda xy),\nonumber\\
 &\quad &\quad \text{ where } \lambda \text{ is a fixed constant},\nonumber\\
\text{Reynolds operator}
  &\quad& P(x)P(y)=P(xP(y)+P(x)y-P(x)P(y)). \nonumber
\end{eqnarray}

After Rota posed his problem, more operators have appeared, such as \begin{eqnarray}
\text{Differential operator of weight} ~ \lambda &\quad&
 d(xy)=d(x)y+x d(y)+\lambda d(x)d(y), \nonumber\\
&\quad& \quad\text{ where } \lambda \text{ is a fixed constant},\nonumber\\
\text{Nijenhuis operator} &\quad& P(x)P(y)=P(xP(y)+P(x)y-P(xy)),\nonumber\\
\text{Leroux's TD operator} &\quad&
P(x)P(y)=P(xP(y)+P(x)y-xP(1)y).\nonumber
\end{eqnarray}

The pivotal roles played by the endomorphisms (such as in Galois theory) and derivations (such as in calculus) are well-known.
Their abstractions have led to the concepts of difference algebra and differential algebra respectively.
The other operators also found applications in a broad range of pure and applied mathematics, including combinatorics, probability and mathematical physics~\cite{BBGN,Ba,CGM,CK1,Gub,Kol,Mill,Nij,PS,Re,Ro2}. See~\cite{GGSZ,GSZ} for further references.

These sustained interests in linear operators that satisfy special operator identities warrant a systematic study of Rota's Classification Problem, leading to the articles~\cite{GGSZ,GSZ}. There are multiple benefits in such study, on the one hand to find a uniform approach to these various existing operators and on other other hand to understand the nature of these operators, namely what distinguish them from a randomly taken operator identity. The latter also sheds light on possible new operator identities that might arise in mathematics and its applications.

\subsection{Rota's Classification Problem in special cases}
There are two stages in the recent approach to Rota's Classification Problem. The first stage is to establish an algebraic framework in which to consider \underline{algebraic identities} satisfied by a linear operator in Rota's Classification Problem. As a prototype, we recall that an algebraic identity satisfied by an algebra is an element in a noncommutative polynomial algebra, as a realization of a free (associative) algebra, leading to the extensive study of polynomial identity (PI) rings~\cite{DF, Pr, Ro}. Since there is an operator involved in an algebraic identity in Rota's Classification Problem, we take an algebraic identity satisfied by an operator to be an element in a free object in the category of algebras with an operator, or operated algebras, whose origin can be tracked back to Kurosh~\cite{Ku}. In~\cite{Gop}, such a free object is realized in the form of polynomials in variables together with their formal derivations, amendable to serve as operated polynomial identities (OPIs) for an algebra with operators.

In this sense, all the operators list above are defined by OPIs. This naturally leads to the second stage in our understanding of Rota's Classification Problem: what distinguishes the OPIs satisfied by these operators from the OPIs defined by arbitrary elements from the operated polynomial algebras? This is a key difference between PI algebras and OPI algebras. In the study of the former, not much difference is made among the elements in the polynomial algebras. This is apparent not the case for elements from the operated polynomial algebras, hence Rota's Classification Problem. In other words, Rota apparently asked to identify special OPIs that are worth of further study, as in the case of the OPIs in the above lists. As a hint for what to look for in these ``good" OPIs, we pay special attention that Rota's Classification Problem asks for linear operators defined on an \underline{algebra}, which in his context means an associative algebra. Therefore, such a ``good" OPIs should satisfy certain compatibility condition with the associativity of the algebra that the operator acts on.

In order to make sense of this compatibility for arbitrary OPIs, we first tested two classes of OPIs which, despite their special forms, are general enough to cover all the operators considered above, except the Reynolds operator. The two classes of operators are called the differential type operators and Rota-Baxter type operators, for their resemblance to the differential operator and the Rota-Baxter operator respectively.

As the initial step, differential type operators, the easier of the two classes of operators, were studied in~\cite{GSZ}, revealing that, the seemingly vague and specialized problem of Rota can be casted in completely general setups. First of all, it was showed that, the somehow ad hoc properties defining differential type operators turn out to be equivalence to the convergence of the rewriting systems defined by these operators. Second, these properties are also equivalent to the existence of a generalization of the Gr\"obner basis, called the Gr\"obner-Shirshov basis, for the ideals defined by these OPIs, giving rise to an explicit construction of the free objects in the category of the algebras satisfying the OPIs. These equivalences suggest intimate connection from Rota's Classification Problem to rewriting systems and Gr\"obner bases.

To obtain more evidence for this speculation, the class of Rota-Baxter type operators was studied in~\cite{GGSZ}. It is encouraging to see that, despite the much more challenging nature of Rota-Baxter operators, the same connections can be established from them to convergent rewriting systems on the one hand and to Gr\"obner-Shirshov bases on the other.

\subsection{Rota's Classification Problem in the general case}
The success in characterizing these two important classes of operators in terms of general properties in rewriting systems and ideal generators motivates us to understand Rota's Classification Problem in the context of these general properties for OPIs, rather than by certain special forms such as being of the differential type or Rota-Baxter type. We carry out this approach in this paper.

We give, in Section~\ref{sec:sgood}, two formulations of Rota's Classification Problem for desirable systems of operator identities, one in terms of convergent rewriting systems and one in terms of Gr\"obner-Shirshov bases. When one monomial in an operated identity is chosen as the leading term, the identity gives a rewriting rule. Our first formulation of Rota's Classification Problem is to find OPIs for which one rewriting system obtained this way is convergent (Problem~\mref{pr:rotare}).

An important and effective way to determine the convergency of a rewriting system is the method of Gr\"obner bases in the case of commutative algebras, or Gr\"obner-Shirshov bases in general. Thus our second formulation of Rota's Classification Problem is to find systems of OPIs that are Gr\"obner-Shirshov bases of the operated ideals that these systems generate, leading to the concepts of \gs and \pgs systems of OPIs, and the corresponding \gs and \pgs operators (Problem~\mref{pr:rota}).

In Section~\mref{sec:sgoodrew}, we establish the relationship between the two reformulations of Rota's Classification Problem, by showing that a \gs system of OPIs gives a convergent system (Theorem~\ref{thm:sgoodtr}). The interplay between the two systems proves to be fruitful. For example, it is not hard to show that the OPIs for the two-sided averaging operator is not
\cs and hence not \gs (Corollary~\ref{coro:avera}); while from showing that it is \pgs we conclude that it is \pcs (Remark~\ref{re:apconv}).

This conceptual approach allows us to obtain an effective criterion to obtain \gs operators (Theorem~\mref{thm:sgoodtype}), including not only the two previously known differential type and Rota-Baxter type operators, but also the modified Rota-Baxter operator~\mcite{Fard} with motivation from modified classical Yang-Baxter equation on Lie algebras~\cite{Sha}. As an application, using the composition-diamond lemma, we obtain the free objects in the category of modified Rota-Baxter algebras.

Putting Rota's Classification Problem in the contexts of rewriting systems and Gr\"{o}bner-Shirshov bases reveals the broad implication of Rota's Classification Problem and provides a framework that the problem might be further investigated and eventually resolved. The connection with Gr\"obner-Shirshov bases in operated algebras is comparable in spirit to Burchburger's Gr\"obner basis theory for commutative algebras and Bergman's analogue for algebras~\cite{Ber}.

\smallskip
\noindent
{\bf Notations}. Throughout this paper, we fix a field \bfk. Denote by $\nbfk:= \bfk\setminus \{0\}$ the subset of nonzero elements of \bfk. We denote the \bfk-span of a set $Y$ by $\bfk Y$. By an algebra, we mean an associative unitary \bfk-algebra. For any set $Y$, let $M(Y)$ denote the free monoid on $Y$ with identity $1$ and $S(Y)$ the free semigroup on $Y$.

\section{Reformulations of Rota's Classification Problem}\mlabel{sec:sgood}
\mlabel{sec:back}
In this section, we first recall some background on operated polynomial identities. We then introduce the concepts of \cs and \pcs systems of OPIs, and \gs and \pgs systems of OPIs, our main objects of study in this paper. We then reformulate Rota's Classification Problem in terms of these concepts.

\subsection{Operated polynomial identities}
\label{sec:freemonoid}

The concept of algebras with linear operators was first introduced by A.\,G. Kurosh~\cite{Ku} under the name of $\Omega$-algebras. It is called an operated algebra in~\cite{Gop} where the construction of free operated algebras was obtained. See also~\cite{BCQ,GSZ}. We briefly recall the construction and refer the reader to the above references for details.

\begin{defn}
{\rm An {\bf operated monoid} (resp. {\bf operated $\bfk$-algebra}) is a monoid (resp. $\bfk$-algebra) $U$ together with a map (resp. $\bfk$-linear map) $P_U: U\to U$.
A morphism from an operated monoid\, (resp. $\bfk$-algebra) $(U, P_U)$ to an operated monoid (resp. $\bfk$-algebra) $(V,P_V)$ is a monoid (resp. $\bfk$-algebra, resp. $\bfk$-module) homomorphism $f :U\to V$ such that $f \circ P_U= P_V \circ f$. } \label{de:mapset}
\end{defn}

Let $X$ be a given set. We will construct the free operated monoid over $X$.
The construction proceeds via the finite stages $\frakM_n(X)$ recursively defined as follows.
The initial stage is $\frakM_0(X) := M(X)$ and $\frakM_1(X) := M(X \cup \lc \frakM_0(X)\rc)$,
where $\lc \frakM_0(X)\rc:= \{ \lc u\rc \mid u\in \frakM_0(X)\}$ is a disjoint copy of $\frakM_0(X)$.
The inclusion $X\hookrightarrow X \cup \lc \frakM_0\rc $ induces a monomorphism
$$i_{0}:\mapmonoid_0(X) = M(X) \hookrightarrow \mapmonoid_1(X) = M(X \cup \lc \frakM_0\rc  )$$
of monoids through which we identify $\mapmonoid_0(X) $ with its image in $\mapmonoid_1(X)$.

For~$n\geq 2$, assume inductively that
$\frakM_{ n-1}(X)$ has been defined and the embedding
$$i_{n-2,n-1}\colon  \frakM_{ n-2}(X) \hookrightarrow \frakM_{ n-1}(X)$$
has been obtained. Then we define
\begin{equation*}
 \label{eq:frakn}
 \frakM_{ n}(X) := M \big( X\cup\lc\frakM_{n-1}(X) \rc\big).
\end{equation*}
Since~$\frakM_{ n-1}(X) = M \big(X\cup \lc\frakM_{ n-2}(X) \rc\big)$ is a free monoid,
the injection
$$  \lc\frakM_{ n-2}(X) \rc \hookrightarrow
    \lc \frakM_{ n-1}(X) \rc $$
induces a monoid embedding
\begin{equation*}
    \frakM_{ n-1}(X) = M \big( X\cup \lc\frakM_{n-2}(X) \rc \big)
 \hookrightarrow
    \frakM_{ n}(X) = M\big( X\cup\lc\frakM_{n-1}(X) \rc \big).
\end{equation*}
Finally we define the monoid
$$ \frakM (X):=\bigcup_{n\geq 0}\frakM_{ n}(X),$$
whose elements are called {\bf bracketed words} or
{\bf  bracketed monomials on $X$}.

Let $\kdot\frakM(X)$ be the free $\bfk$-module spanned by
$\frakM(X)$. The
multiplication on $\frakM(X)$ extends by linearity to turn the
$\bfk$-module $\kdot\frakM(X)$ into a $\bfk$-algebra.
Furthermore, we extend the operator
$\lc\ \rc: \frakM(X) \to \frakM(X), \ w\mapsto \lc w\rc$
to an operator $P$ on $\bfk\frakM(X)$ by linearity,
turning the $\bfk$-algebra $\bfk\frakM(X)$ into an operated
$\bfk$-algebra.

\begin{lemma}{\rm (\cite[Corollary~3.7]{Gop})}
Let $i_X:X \to \frakM(X)$ and $j_X: \frakM(X) \to \bfk\mapm{X}$ be the natural embeddings. Then, with the notations above,
\begin{enumerate}
\item
the triple $(\frakM(X),\lc\ \rc, i_X)$ is the free operated monoid on $X$; and
\label{it:mapsetm}
\item
the triple $(\bfk\mapm{X},P, j_X\circ i_X)$ is the free operated $\bfk$-algebra
on $X$. \label{it:mapalgsg}
\end{enumerate}
\label{pp:freetm}
\end{lemma}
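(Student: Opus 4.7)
The plan is to verify each part via the corresponding universal property, reducing (2) to (1) at the end.

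For part (1), given an operated monoid $(U,P_U)$ and a set map $f:X\to U$, I would construct the desired operated monoid morphism $\bar f:\frakM(X)\to U$ by induction on the stages $\frakM_n(X)$ used to build $\frakM(X)$. At the initial stage, the universal property of the free monoid applied to $M(X)=\frakM_0(X)$ produces a unique monoid morphism $\bar f_0:\frakM_0(X)\to U$ extending $f$. Assuming $\bar f_{n-1}:\frakM_{n-1}(X)\to U$ has been defined compatibly with the embedding $i_{n-2,n-1}$, I would define a set map on the generating set $X\cup\lc\frakM_{n-1}(X)\rc$ of $\frakM_n(X)$ by sending $x\mapsto f(x)$ and $\lc u\rc\mapsto P_U(\bar f_{n-1}(u))$, and extend this uniquely to a monoid morphism $\bar f_n:\frakM_n(X)\to U$ using the freeness of $\frakM_n(X)=M(X\cup\lc\frakM_{n-1}(X)\rc)$. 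Compatibility with $i_{n-1,n}$ must be checked, but it reduces to checking on the generating set, where it is immediate. The union $\bar f:=\bigcup_n\bar f_n$ is then a well-defined monoid morphism, and the identity $\bar f\circ\lc\ \rc=P_U\circ\bar f$ holds by construction on each stage.

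Uniqueness of $\bar f$ follows from the same inductive scheme: any operated monoid morphism extending $f$ must agree with $\bar f$ on $X$; operator compatibility then forces agreement on $\lc\frakM_0(X)\rc$; the monoid morphism property forces agreement on all products of these elements, i.e.\ on $\frakM_1(X)$; and the induction continues through all stages.

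For part (2), I would reduce to part (1) as follows. Given an operated $\bfk$-algebra $(V,P_V)$ and a set map $f:X\to V$, regard $(V,P_V)$ as an operated monoid via its multiplicative structure. By part (1), there is a unique operated monoid morphism $\bar f:\frakM(X)\to V$ extending $f$. Because $\frakM(X)$ is a $\bfk$-basis of $\bfk\frakM(X)$, $\bar f$ extends uniquely by $\bfk$-linearity to a $\bfk$-linear map $\bar f:\bfk\frakM(X)\to V$. Multiplicativity on basis elements upgrades to algebra morphism property by bilinearity, and $P$-compatibility extends from $\frakM(X)$ to $\bfk\frakM(X)$ by linearity of both $P$ and $P_V$. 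Uniqueness: any operated $\bfk$-algebra morphism extending $f$ restricts to an operated monoid morphism $\frakM(X)\to V$ extending $f$, which must coincide with $\bar f$ by part (1), and hence the two $\bfk$-linear extensions agree on the spanning set $\frakM(X)$.

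There is no real obstacle here; the argument is bookkeeping inside the inductive construction of $\frakM(X)$. The only point requiring mild care is verifying at each inductive step that the maps $\bar f_n$ are compatible with the chain of embeddings $i_{n-1,n}:\frakM_{n-1}(X)\hookrightarrow\frakM_n(X)$ used to define $\frakM(X)$ as a directed union, so that the limit $\bar f$ is unambiguously defined.
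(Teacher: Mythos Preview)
Your argument is correct and is the standard way to establish this universal property: induct on the filtration stages $\frakM_n(X)$, using freeness of $M(-)$ at each step, then linearize for part~(2). Note, however, that the paper does not actually prove this lemma; it is quoted verbatim from \cite[Corollary~3.7]{Gop} and used as background. So there is no ``paper's proof'' to compare against here---your write-up supplies exactly the argument one would give, and the only delicate point you flag (compatibility of the $\bar f_n$ along the embeddings $i_{n-1,n}$) is indeed the one thing that needs checking and is handled by your observation that it suffices to verify it on generators.
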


\begin{defn}
Let $\phi(x_1,\ldots,x_k)\in \bfk\mapm{X}$ with $k\geq 1$ and $x_1, \ldots, x_k\in X$. We call
$\phi(x_1,\ldots,x_k) = 0$ (or simply $\phi(x_1,\ldots,x_k)$) an
{\bf  operated polynomial identity (OPI)}.
\end{defn}
Let $\phi= \phi(x_1,\ldots,x_k)\in \bfk\mapm{X}$ be an OPI. For any operated algebra $(R,P)$ and any map $\theta: x_i \mapsto r_i, i=1, \ldots, k$, using the universal property of $\bfk\mapm{x_1, \ldots, x_k}$ as a free operated algebra on $\{x_1,\cdots,x_k\}$, there is a unique morphism $\free{\theta}:\bfk\mapm{x_1,\ldots,x_k}\to R$ of operated algebras that extends the map $\theta$. We use the notation
\begin{equation*}
\phi(r_1,\ldots,r_k):= \free{\theta}(\phi(x_1,\ldots,x_k))
\label{eq:phibar}
\end{equation*}
for the corresponding {\bf evaluation} or {\bf substitution} of $\phi(x_1,\ldots,x_k)$ at the point $(r_1,\ldots,r_k)$. Informally, this is the element of $R$ obtained from $\phi$ upon
replacing every $x_i$ by $r_i$, $1\leq i\leq k$ and the operator $\lc\ \rc$ by $P$.
\begin{defn}
With the above notations, we say that $\phi(x_1,\ldots,x_k)=0$ (or simply $\phi(x_1,\ldots,x_k)$) is an {\bf OPI satisfied by $(R,P)$} if
$$\phi(r_1,\ldots,r_k)=0 \quad \text{for all } r_1,\ldots,r_k\in R.$$
In this case, we call $(R,P)$ (or simply $R$) a {\bf  $\phi$-algebra} and $P$ a {\bf $\phi$-operator}.
More generally, For a subset $\Phi\subseteq \bfk\mapm{X}$, we call $R$ (resp. $P$) a {\bf $\Phi$-algebra} (resp. {\bf $\Phi$-operator}) if $R$ (resp. $P$) is a $\phi$-algebra (resp. $\phi$-operator) for each $\phi\in \Phi$.
\label{de:phialg}
\end{defn}

For example, when $\phi=\lc x_1 x_2\rc -\lc x_1\rc x_2 -x_1\lc x_2\rc$ (resp. $\phi=\lc x_1\rc \lc x_2\rc-\lc x_1\lc x_2\rc\rc -\lc \lc x_1\rc x_2\rc -\lambda \lc x_1x_2\rc$), a $\phi$-algebra is simply a differential algebra (resp. a Rota-Baxter algebra of weight $\lambda$). When $\phi = x_1x_2-x_2x_1$, a $\phi$-algebra is a commutative algebra.

For $S\subseteq R$, the {\bf operated ideal} $\Id(S)$ of~$R$ generated by $S$ is defined to be the smallest operated ideal of $R$ containing $S$.
For $\Phi\subseteq \bfk\mapm{X}$ and a set $Z$, let $S_\Phi(Z) \subseteq \bfk\mapm{Z}$ denote the substitution set
\begin{equation}
S_\Phi(Z) := \{\,{\phi}(u_1,\ldots,u_k) \mid u_1,\ldots,u_k\in \mapm{Z}, \phi(x_1,\ldots,x_k)\in \Phi\,\}.
\mlabel{eq:genphi}
\end{equation}
The following well-known result exhibits the existence of a free $\Phi$-algebra whose explicit construction will be explored in this paper.

\begin{prop} {\rm (\cite[Proposition~1.3.6]{Coh})} Let $X$ be a set and $\Phi\subseteq \bfk\frakM(X)$ a system of OPIs. Then for a set $Z$, the quotient operated algebra $\bfk\mapm{Z}/\Id(S_\Phi(Z))$ is the free $\Phi$-algebra on $Z$.
\mlabel{pp:frpio}
\end{prop}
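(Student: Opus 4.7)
The plan is to verify the two defining properties of a free object on $Z$ in the category of $\Phi$-algebras: (i) the quotient $F:=\bfk\mapm{Z}/\Id(S_\Phi(Z))$ is itself a $\Phi$-algebra, and (ii) $F$ satisfies the required universal property with respect to the natural map $Z\to F$ obtained by composing the inclusion $Z\hookrightarrow\bfk\mapm{Z}$ with the quotient $\pi$. Since $\Id(S_\Phi(Z))$ is by construction an operated ideal, the multiplication and the operator $P$ on $\bfk\mapm{Z}$ descend to $F$, endowing it with an operated $\bfk$-algebra structure.

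I will establish the universal property first, as it is the more formal of the two. Let $(R,P_R)$ be any $\Phi$-algebra and $\theta:Z\to R$ any set map. By Lemma~\mref{pp:freetm}, $\theta$ lifts uniquely to an operated $\bfk$-algebra morphism $\tilde\theta:\bfk\mapm{Z}\to R$. For every generator $\phi(u_1,\ldots,u_k)\in S_\Phi(Z)$, the hypothesis that $R$ is a $\Phi$-algebra forces $\tilde\theta(\phi(u_1,\ldots,u_k))=\phi(\tilde\theta(u_1),\ldots,\tilde\theta(u_k))=0$ in $R$. Thus $S_\Phi(Z)\subseteq\ker\tilde\theta$, and since the kernel of an operated algebra morphism is itself an operated ideal of $\bfk\mapm{Z}$, we obtain $\Id(S_\Phi(Z))\subseteq\ker\tilde\theta$. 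Consequently $\tilde\theta$ factors uniquely as $\tilde\theta=\bar\theta\circ\pi$ for a unique operated algebra morphism $\bar\theta:F\to R$ extending $\theta$, yielding the universal property.

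To show that $F$ itself is a $\Phi$-algebra, fix $\phi(x_1,\ldots,x_k)\in\Phi$ and elements $\bar r_1,\ldots,\bar r_k\in F$. Choosing lifts $r_i\in\bfk\mapm{Z}$, naturality of evaluation (the composite $\bfk\mapm{x_1,\ldots,x_k}\to\bfk\mapm{Z}\to F$ defined on generators by $x_i\mapsto r_i\mapsto\bar r_i$ agrees by uniqueness in Lemma~\mref{pp:freetm} with the direct evaluation at $\bar r_i$) gives $\phi(\bar r_1,\ldots,\bar r_k)=\pi(\phi(r_1,\ldots,r_k))$, so it suffices to show that $\phi(r_1,\ldots,r_k)\in\Id(S_\Phi(Z))$ for arbitrary $r_i\in\bfk\mapm{Z}$. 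Writing $r_i=\sum_j c_{ij}u_{ij}$ with $u_{ij}\in\mapm{Z}$ and $c_{ij}\in\bfk$, and invoking multilinearity of $\phi$ in each of its arguments (satisfied by every OPI of interest in this paper), $\phi(r_1,\ldots,r_k)$ expands as a $\bfk$-linear combination of substitutions $\phi(u_{1j_1},\ldots,u_{kj_k})\in S_\Phi(Z)\subseteq\Id(S_\Phi(Z))$, completing the verification. The main obstacle is precisely this last step, which relies on the multilinearity of the OPIs; for a general non-multilinear $\phi$ one must implicitly enlarge the generating set $S_\Phi(Z)$ to include substitutions by arbitrary elements of $\bfk\mapm{Z}$ before the same argument applies, which is the convention under which the proposition is cited from~\cite{Coh}.
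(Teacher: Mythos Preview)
The paper does not supply its own proof of this proposition; it is stated with a bare citation to \cite[Proposition~1.3.6]{Coh}. Your argument is the standard verification and is essentially what one would write out if asked to unpack that citation.

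Your universal-property half is clean and correct. Your second half, showing that $F$ is itself a $\Phi$-algebra, is also correct under the multilinearity hypothesis you invoke, and you are right to flag this as the delicate point: with $S_\Phi(Z)$ defined in Eq.~(\mref{eq:genphi}) by substitution of \emph{monomials} only, the quotient can genuinely fail to be a $\Phi$-algebra when $\phi$ is not multilinear (e.g.\ for $\phi(x)=\lc x\rc\lc x\rc$ one would need $\lc u\rc\lc v\rc+\lc v\rc\lc u\rc$ to lie in the ideal generated by the squares $\lc w\rc^2$, which it need not). Since every OPI actually analyzed in this paper is multilinear---indeed Theorem~\mref{thm:sgoodtype} assumes this explicitly---your proof covers all the cases the paper requires, and your closing caveat about the general non-multilinear case (enlarging $S_\Phi(Z)$ to allow substitution by arbitrary elements, as in Cohn's setting) is the appropriate disclaimer.
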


\subsection{Rota's Classification Problem via rewriting systems} \mlabel{ssec:pre}
As preparation, we recall concepts on \term-rewriting systems from \mcite{BN,GGSZ}.

\begin{defn}{\rm
Let $V$ be a $\bfk$-space with a given $\bfk$-basis $W$.
\begin{enumerate}
\item For $f=\sum\limits_{w\in W}c_w w \in V$ with $c_w\in \bfk$, the {\bf  support} $\Supp(f)$ of $f$ is the set $\{w\in W\,|\,c_w\neq 0\}$. As convention, we take $\Supp(0) = \emptyset$.
\item Let $f,g\in V$. We use $f \dps g$ to indicate the property that $\Supp(f) \cap \Supp(g) = \emptyset$. If this is the case, we say $f \dps g$ is a {\bf  direct sum} of $f$ and $g$, and use $f\dps g$ also for the sum $f+ g$.

\item For $f \in V$ and $w \in \Supp(f)$ with the coefficient $c_w$, write $R_w(f) := c_w w -f \in V$. So $f = c_w w \dps (-R_w(f))$.
\end{enumerate}
\mlabel{def:dps}
}\end{defn}

\begin{defn}{\rm Let $V$ be a $\bfk$-space with a $\bfk$-basis $W$.
\begin{enumerate}
\item  A {\bf  \term-rewriting system $\Pi$ on $V$ with respect to $W$} is a binary relation $\Pi \subseteq W \times V$. An element $(t,v)\in \Pi$ is called a (\term-) rewriting rule of $\Pi$, denoted by $t\to v$.

\item The \term-rewriting system $\Pi$ is called {\bf  \simple with respect to $W$} if $t \dps v$ for all $t\to v\in \Pi$.

\item If $f = c_t t\dps (-R_t(f))\in V$, using the rewriting rule $t\to v$, we get a new element $g:= c_t v - R_t(f) \in V$, called a {\bf one-step rewriting} of $f$ and denoted $f \to_\Pi g$ or $\tvarrow{f}{g}{\Pi}$.  \label{item:Trule}

\item The reflexive-transitive closure of $\rightarrow_\Pi$ (as a binary relation on $V$) is denoted by $\astarrow_\Pi$ and, if $f \astarrow_\Pi g$, we say {\bf  $f$ rewrites to $g$ with respect to $\Pi$}. \label{item:rtcl}

\item Two elements  $f, g \in V$ are {\bf  joinable} if there exists $h \in V$ such that $f \astarrow_\Pi h$ and $g \astarrow_\Pi h$; we denote this by $f \downarrow_\Pi g$.

\item An element $f\in V$ is {\bf a normal form} if no more rules from $\Pi$ can apply, more precisely, if $\Supp(f)\cap \mathrm{Dom}(\Pi)=\emptyset$ where $\mathrm{Dom}(\Pi)$ is the domain of $\Pi\subseteq W\times V$.
\end{enumerate}
\label{def:ARSbasics}}
\end{defn}

The crucial point of Item~(\mref{item:Trule}) in Definition~\mref{def:ARSbasics} is that, in order to apply a rewriting rule $t\to v$ to $f$, one
must firstly express $f$ as the direct sum $f=c_t t\dps (-R_t(f))$.
The following definitions are adapted from abstract rewriting systems \mcite{BN, Oh}.

\begin{defn}
A \term-rewriting system $\Pi$ on $V$ is called
 \begin{enumerate}
 \item {\bf  terminating} if there is no infinite chain of one-step rewriting \vspace{-3pt}$$f_0 \rightarrow_\Pi f_1 \rightarrow_\Pi f_2 \cdots \quad.\vspace{-3pt}$$
\item {\bf  confluent} (resp. {\bf  locally confluent}) if every fork (resp. local fork) is joinable.
\item {\bf  convergent} if it is both terminating and confluent.
\end{enumerate}
\label{def:ARS}
\end{defn}

Given a system of OPIs, we can associate it with a \term-rewriting system.
For this, we need the following concept.

\begin{defn}
Let $Z$ be a set, $\star$ a symbol not in $Z$ and $Z^\star = Z \cup \{\star\}$.
\begin{enumerate}
\item By a {\bf  $\star$-bracketed word} on $Z$, we mean any bracketed word in $\mapm{Z^\star}$ with exactly one occurrence of $\star$, counting multiplicities. The set of all $\star$-bracketed words on $Z$ is denoted by $\frakM^{\star}(Z)$.
\item For $q\in \frakM^\star(Z)$ and $u \in  \frakM({Z})$, we define $q|_{\star \mapsto u}$ to be the bracketed word on $Z$ obtained by replacing the symbol $\star$ in $q$ by $u$.

\item For $q\in \frakM^\star(Z)$ and $s=\sum_i c_i u_i\in \bfk\frakM{(Z)}$, where $c_i\in\bfk$ and $u_i\in \frakM{(Z)}$, we define
\begin{equation*}
 q|_{s}:=\sum_i c_i q|_{u_i}\,. \vspace{-5pt}
\end{equation*}

\item A bracketed word $u \in \frakM(Z)$ is a {\bf  subword} of another
bracketed word $w \in \frakM(Z)$ if $w  = q|_u$ for some $q \in
\frakM^\star(Z)$.
\end{enumerate}
More generally, let $\star_1,\ldots,\star_k$ be distinct symbols not in
$Z$ and set $Z^{\star k} := Z\cup \{\star_1, \dots, \star_k \}$, $k\geq 1$.
\begin{enumerate}
\setcounter{enumi}{4}
\item We define an {\bf $(\star_1, \dots,
    \star_k)$-bracket word on $Z$} to be an expression in
  $\frakM(Z^{\star k})$ with exactly one occurrence of each of
  $\star_i$, $1\leq i\leq k$. The set of all
  $(\star_1, \dots, \star_k)$-bracket words on $Z$
  is denoted by $\frakM^{\star k}(Z)$.

\item For $q\in \frakM^{\star k}(Z)$ and $u_1,\dots, u_k\in
  \bfk \frakM^{\star k}(Z)$, we define
$$\stars{q}{u_1,\dots, u_k} := \stars{q}{\star_1 \mapsto u_1, \ldots, \star_k \mapsto u_k}$$
to be the element of $\bfk\frakM(Z)$ obtained from $q$ when the letter $\star_i$, $1\leq i\leq k$, in $q$ is replaced by $u_i$.
\end{enumerate}
 \mlabel{def:starbw}
\end{defn}

\begin{defn}
Let $Z$ be a set and $S\subseteq \bfk \frakM(Z)$.
\begin{enumerate}
\item
Let $s\in \bfk\frakM(Z)$ and fix a monomial $\fix{s}$
of $s$, called an {\bf orientation} of $s$. The  {\bf monicization of $s$ with respect to} $\fix{s}$ is replacing $s$ by its quotient over the coefficient of $\fix{s}$, making $s$ monic if $\fix{s}$ is taken as the leading term. When this is done for each $s$ in a subset $S$ of $\bfk\frakM(Z)$, then we call $S$ {\bf monicized with respect to the orientation} $\fix{S}:=\{\fix{s}\mid s\in S\}$.
\item Let $S\subseteq \bfk\frakM(Z)$ with a given orientation $\fix{S}:=\{\fix{s}\mid s\in S\}$.
We can write $s = \fix{s} \dps (-R(s))$. Define a term-rewriting system on $\bfk\mapm{Z}$ by
\begin{equation*}
\Pi_{S}:=\Pi_{S,\fix{S}}(Z) := \{\, q|_{\fix{s}}\to  q|_{R(s)} \mid  s \in S, ~q \in \frakM^\star(Z)\,\} \subseteq \mapm{Z} \times \bfk\mapm{Z}.
\end{equation*}
We call $\Pi_S$ the {\bf \term-rewriting system} associated to $S$ with respect to $\fix{S}=\{\fix{s} \mid s\in S\}$.
\label{it:fix1}
\item Let $\Phi \subseteq \bfk \mapm{X}$ be a system of OPIs. For a set $Z$, let
$$\fix{S}_{{\Phi}}(Z):=\{\fix{\phi(\fraku)}\mid\phi(\fraku)\in S_\Phi(Z) \},$$
be an orientation of the set $S_\Phi(Z)$ in Eq.~(\mref{eq:genphi}).
We call the resulting rewriting system
\begin{equation*}
\Pi_{S_\Phi(Z)}:= \Pi_{S_\Phi(Z),\,\fix{S}_\Phi(Z)}:=\{\,q|_{\fix{\phi(\fraku)}}\to q|_{\re{\phi(\fraku)}} \mid q \in \frakM^\star(Z), \phi(\fraku)\in S_\Phi(Z) \} \subseteq \mapm{Z}\times \bfk\mapm{Z}
%
\end{equation*}
the {\bf term-rewriting system with respect to $\fix{S}_\Phi(Z)$}.
In particular, if $\Phi = \{\phi\}$, we get a \term-rewriting system associated to $\phi$ with respect to $\fix{S}_\phi(Z):= \{\fix{\phi(\fraku)} \mid \phi(\fraku)\in S_\phi(Z) \}$
\begin{equation*}
\Pi_{S_\phi(Z)}:= \Pi_{S_\phi(Z),\,\fix{S}_{{\phi}}(Z)}:=\{\,q|_{\fix{\phi(\fraku)}}\to q|_{\re{\phi(\fraku)}} \mid q \in \frakM^\star(Z), \phi(\fraku)\in S_\phi(Z) \} \subseteq \mapm{Z}\times \bfk\mapm{Z}.
%
\end{equation*}
\label{it:ori3}
\end{enumerate}
\label{def:redsys}
\end{defn}

For notational clarify, we will often abbreviate $\to_{\Pi_{S_\phi(Z)}}$ (resp. $\astarrow_{\Pi_{S_\phi(Z)}}$, resp. $\downarrow_{\Pi_{S_\phi(Z)}}$) to $\to_\phi$ (resp. $\astarrow_\phi$, resp. $\downarrow_{\phi}$).

\begin{defn}
Let $X$ be a set and $\Phi\subseteq \bfk \frakM(X)$ a system of OPIs. Let $Z$ be a set and $\Pi_{S_\Phi(Z)} = \Pi_{S_\Phi(Z),\, \fix{S}_\Phi(Z)}$ a \term-rewriting system with respect to an orientation $\fix{S}_\Phi(Z)$ of $S_\Phi(Z)$.
\begin{enumerate}
\item We call $\Phi$ {\bf convergent on $Z$ with respect to $\fix{S}_\Phi(Z)$} if $\Pi_{S_\Phi(Z)} = \Pi_{S_\Phi(Z),\, \fix{S}_\Phi(Z)}$ is convergent.
\mlabel{it:pwrtp}
\item We call $\Phi$ {\bf potentially convergent on $Z$ with respect to $\fix{S}_\Phi(Z)$} if,
there is a superset $\Phi'\subseteq \bfk\frakM(Z)$ of $\Phi$ with $\Id(S_\Phi(Z)) = \Id(S_{\Phi'}(Z))$
 and an orientation $\fix{S}_{\Phi'}(Z):=\{ \fix{\phi(\fraku)} \mid \phi(\fraku)\in S_{\Phi'}(Z) \}$
containing $\fix{S}_\Phi(Z)$, such that $\Pi_{S_{\Phi'}(Z),\, \fix{S}_{\Phi'}(Z)}$ is convergent.
\end{enumerate}
\mlabel{def:cwpp}
\end{defn}

\begin{defn}
Let $X$ be a set, and let $\Phi\subseteq \bfk \frakM(X)$ be a system of OPIs.
\begin{enumerate}
\item We call $\Phi$ {\bf convergent} (resp. {\bf potentially convergent}) if, for each set $Z$, there is an orientation $\fix{S}_\Phi(Z)$ such that $\Phi$ is convergent (resp.  potentially convergent) on $Z$ with respect to $\fix{S}_\Phi(Z)$.
\item A $\Phi$-operator $P$ is called {\bf convergent} (resp. {\bf potentially convergent}) if $\Phi$ is so.
\end{enumerate}
\end{defn}

We can now interpret Rota's Classification Problem in terms of rewriting systems.
\begin{problem}
{\bf (Rota's Classification Problem via rewriting systems)} Determine all convergent and potentially
convergent systems of OPIs.
\label{pr:rotare}
\end{problem}

The well-known (two-sided) averaging operator $P$~(see \mcite{GP} for example) satisfies
$$ P(x_1)P(x_2)=P(P(x_1)x_2)=P(x_1P(x_2))$$
and hence is defined by the system of OPIs
\begin{equation}
\begin{aligned}
\phi_1(x_1, x_2) &= \lc x_1\rc\lc x_2\rc - \lc \lc x_1\rc x_2\rc,\\
\phi_2(x_1, x_2) &= \lc x_1\lc x_2\rc\rc - \lc \lc x_1\rc x_2\rc.
\end{aligned}
\mlabel{eq:phi12}
\end{equation}

\begin{prop}
The system of OPIs for the (two-sided) averaging operator is not convergent.
\mlabel{pp:anotsg}
\end{prop}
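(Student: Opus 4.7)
The plan is to identify an irreducible critical overlap in $S_\Phi(Z)$ whose two reducts are distinct normal forms, violating confluence for every choice of orientation.

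First, I take $Z = \{x, y\}$ and single out the bracketed word $w := \lc\lc x\rc\lc y\rc\rc \in \frakM(Z)$. The crucial observation is that $w$ appears as a monomial of two different elements of $S_\Phi(Z)$: it is the first monomial of $q|_{\phi_1(x,y)} = \lc\lc x\rc\lc y\rc\rc - \lc\lc\lc x\rc y\rc\rc$ for $q = \lc\star\rc \in \frakM^\star(Z)$, and it is the first monomial of the substituted OPI $\phi_2(\lc x\rc, y) = \lc\lc x\rc\lc y\rc\rc - \lc\lc\lc x\rc\rc y\rc$. Hence any rewriting system built from the two OPIs forces the three bracketed words $\lc\lc x\rc\lc y\rc\rc$, $\lc\lc\lc x\rc y\rc\rc$, and $\lc\lc\lc x\rc\rc y\rc$ to be equivalent modulo the ideal.

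Second, under the natural orientation $\fix{\phi_1(x,y)} = \lc x\rc\lc y\rc$ and $\fix{\phi_2(\lc x\rc, y)} = \lc\lc x\rc\lc y\rc\rc$, the associated rewriting system contains the rules
\[\lc\lc x\rc\lc y\rc\rc \to \lc\lc\lc x\rc y\rc\rc \quad\text{and}\quad \lc\lc x\rc\lc y\rc\rc \to \lc\lc\lc x\rc\rc y\rc,\]
producing a critical fork at $w$. I verify that both reducts are normal forms by checking that no subword of either reduct matches any LHS pattern ($\lc u_1\rc\lc u_2\rc$, $\lc u_1\lc u_2\rc\rc$, or $\lc\lc u_1\rc u_2\rc$ — the last only if oriented as leading) with nontrivial $u_1, u_2$; in particular the monomials $\lc\lc\lc x\rc y\rc\rc$ and $\lc\lc\lc x\rc\rc y\rc$ contain no juxtaposition of two bracketed subwords and no occurrence of a bracketed word immediately to the right of another subword inside a bracket. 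Since these two normal forms are syntactically distinct, confluence fails.

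Third, to upgrade this to failure for \emph{every} orientation, I run through the four possible orientations of the pair $(\fix{\phi_1(x,y)}, \fix{\phi_2(\lc x\rc, y)})$. Reversing either orientation destroys one of the two rules above but, combined with the admissible orientation choices for $\phi_1(x,y)$ and $\phi_2(x,y)$, creates a new unresolved fork at one of the monomials $\lc\lc\lc x\rc y\rc\rc$ or $\lc\lc\lc x\rc\rc y\rc$, whose two reducts (among $\lc\lc x\rc\lc y\rc\rc$, $\lc\lc x\lc y\rc\rc\rc$, $\lc\lc x\rc\rc\lc y\rc$) are again distinct normal forms. The underlying reason is that the three monomials $\lc x\rc\lc y\rc$, $\lc\lc x\rc y\rc$, $\lc x\lc y\rc\rc$ at depth one are pairwise non-comparable under the bracket-structure, and likewise the three candidates at depth two; no coherent linear orientation of all these instances produces a terminating and confluent system.

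The main obstacle is the combinatorial case analysis over all orientations of the infinitely many instances in $S_\Phi(Z)$. The cleanest way to manage it is to introduce a simple invariant—such as the multiset recording, for each variable occurrence, the sequence of brackets containing it and its siblings inside each bracket—which distinguishes each pair of candidate normal forms identified above, regardless of how peripheral OPI instances are oriented. This prevents any auxiliary rule from joining the conflicting reducts, which completes the argument.
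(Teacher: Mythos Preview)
Your overall strategy---take $Z=\{x,y\}$, focus on $w=\lc\lc x\rc\lc y\rc\rc$, and run through the possible orientations to exhibit a non-joinable fork in each---is exactly the paper's approach. The paper does precisely this: it fixes $Z=\{z_1,z_2\}$, considers the four orientation choices for the pair $(\phi_1,\phi_2)$, and in each of the four cases writes down an explicit fork whose two reducts are distinct normal forms.

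Where your proposal falls short is in the execution. First, the four cases the paper analyzes are the four orientations of $\phi_1$ and $\phi_2$ \emph{as OPIs} (i.e., uniformly across all substitution instances), not the four orientations of the particular pair $(\phi_1(x,y),\,\phi_2(\lc x\rc,y))$ that you single out. The latter choice leaves the orientations of all other instances---including $\phi_2(x,y)$, $\phi_1(\lc x\rc,y)$, etc.---unconstrained, which is what forces you into the tangle you describe in your third and fourth paragraphs. Second, you only work out one case and hand-wave the remaining three (``creates a new unresolved fork at one of the monomials\ldots''); the paper writes out each case with the specific fork and the two explicit normal forms. Third, your proposed ``invariant'' to handle the putatively infinite orientation space is never defined, and the sketch you give is not enough to see that it actually separates the relevant reducts.

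The fix is simply to do what the paper does: treat the orientation as a choice at the level of $\phi_1$ and $\phi_2$ (two binary choices, hence four cases), and for each case write down the fork and verify by inspection that the two reducts are irreducible. No invariant is needed; the direct case analysis is short and complete.
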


As we will see in Remark~\ref{re:apconv}, this system of OPIs is \pcs.

\begin{proof}
Let $Z=\{z_1,z_2\}$, $w = \lc \lc z_1\rc \lc z_2\rc\rc \in \frakM(Z)$ and $\Phi = \{\phi_1, \phi_2\}$.
Write
$$ \phi_1 = \phi_1(z_1, z_2)\,\text{ and }\, \phi_2=\phi_2(z_1, z_2).$$
According to the choice of orientations $\fix{\phi_1}$ and $\fix{\phi_2}$ of $\phi_1$ and $\phi_2$, we have the following four cases.
\smallskip

\noindent
{\bf Case 1.} $\fix{\phi_1} = \lc z_1\rc\lc z_2\rc$ and $\fix{\phi_2} = \lc z_1\lc z_2\rc\rc$. Then Eq.~(\mref{eq:phi12}) induces two rewriting rules
\begin{equation*}
\begin{aligned}
\lc z_1\rc\lc z_2\rc \to_{\phi_1}  \lc \lc z_1\rc z_2\rc\,\text{ and }\,
\lc z_1\lc z_2\rc\rc \to_{\phi_2}  \lc \lc z_1\rc z_2\rc.
\end{aligned}
\end{equation*}
We have
$$w=\lc \lc z_1\rc \lc z_2\rc\rc \to_{\phi_1}  \lc \lc\lc z_1\rc z_2\rc\rc \,\text{ and }\,
w= \lc \lc z_1\rc \lc z_2\rc\rc \to_{\phi_2} \lc \lc \lc z_1\rc \rc z_2\rc.$$
Since $\lc \lc\lc z_1\rc z_2\rc\rc$ and  $\lc \lc \lc z_1\rc \rc z_2\rc$ are different normal forms, $\Pi_{S_\Phi(Z)}$ is not confluent.
\smallskip

\noindent
{\bf Case 2.} $\fix{\phi_1} = \lc z_1\rc\lc z_2\rc$ and
$\fix{\phi_2} = \lc \lc z_1\rc z_2\rc$. Then Eq.~(\mref{eq:phi12}) induces two rewriting rules
\begin{equation*}
\begin{aligned}
\lc z_1\rc\lc z_2\rc \to_{\phi_1}  \lc \lc z_1\rc z_2\rc\,\text{ and }\,
 \lc \lc z_1\rc z_2\rc \to_{\phi_2} \lc z_1\lc z_2\rc\rc .
\end{aligned}
\end{equation*}
We have
$$w=\lc \lc z_1\rc \lc z_2\rc\rc \to_{\phi_1}  \lc \lc\lc z_1\rc z_2\rc\rc \to_{\phi_2}
\lc \lc z_1\lc z_2\rc \rc\rc
\,\text{ and }\,
w=\lc \lc z_1\rc \lc z_2\rc\rc \to_{\phi_2} \lc z_1\lc \lc z_2\rc \rc\rc.$$
Again, since $\lc \lc z_1\lc z_2\rc \rc\rc$ and  $\lc z_1\lc \lc z_2\rc \rc\rc$ are different normal forms, $\Pi_{S_\Phi(Z)}$ is not confluent.
\smallskip

\noindent
{\bf Case 3.} $\fix{\phi_1} = \lc\lc z_1\rc z_2\rc$ and $\fix{\phi_2} = \lc z_1\lc z_2\rc\rc$. Then Eq.~(\mref{eq:phi12}) induces two rewriting rules
\begin{equation*}
\begin{aligned}
 \lc \lc z_1\rc z_2\rc \to_{\phi_1} \lc z_1\rc\lc z_2\rc\,\text{ and }\,
\lc z_1\lc z_2\rc\rc \to_{\phi_2}  \lc \lc z_1\rc z_2\rc.
\end{aligned}
\end{equation*}
We have
$$w=\lc \lc z_1\rc \lc z_2\rc\rc \to_{\phi_1}  \lc z_1\rc \lc \lc z_2\rc\rc \,\text{ and }\,
w=\lc \lc z_1\rc \lc z_2\rc\rc \to_{\phi_2} \lc \lc \lc z_1\rc \rc z_2\rc \to_{\phi_1}
 \lc \lc z_1\rc \rc \lc z_2\rc.$$
Since $ \lc z_1\rc \lc \lc z_2\rc\rc $ and $ \lc \lc z_1\rc \rc \lc z_2\rc$ are different normal forms, $\Pi_{S_\Phi(Z)}$ is not confluent.
\smallskip

\noindent
{\bf Case 4.} $\fix{\phi_1} = \lc \lc z_1\rc z_2\rc$ and
$\fix{\phi_2} = \lc \lc z_1\rc z_2\rc$. Then Eq.~(\mref{eq:phi12}) induces two rewriting rules
\begin{equation*}
\begin{aligned}
 \lc \lc z_1\rc z_2\rc \to_{\phi_1} \lc z_1\rc\lc z_2\rc\,\text{ and }\,
 \lc \lc z_1\rc z_2\rc \to_{\phi_2} \lc z_1\lc z_2\rc\rc .
\end{aligned}
\end{equation*}
We have
$$\lc \lc z_1\rc \lc z_2\rc\rc \to_{\phi_1}  \lc z_1\rc  \lc \lc z_2\rc\rc
\,\text{ and }\,
\lc \lc z_1\rc \lc z_2\rc\rc \to_{\phi_2} \lc z_1\lc \lc z_2\rc\rc\rc.$$
Again, since $\lc z_1\rc  \lc \lc z_2\rc\rc$ and  $\lc z_1\lc \lc z_2\rc\rc\rc$ are different normal forms, $\Pi_{S_\Phi(Z)}$ is not confluent.

In summary, for the set $Z=\{z_1, z_2\}$, there is no $\Pi_{S_\Phi(Z)}$ such that $\Phi$ is confluent
on $Z$ with respect to $\fix{S}_\Phi(Z)$. So $\Phi$ is not convergent.
\end{proof}

\subsection{Rota's Classification Problem via Gr\"{o}bner-Shirshov bases} \mlabel{ssec:pgs}
In this subsection, we give the definitions of \gs and \pgs systems of OPIs. Let us first recall some background on Gr\"obner-Shirshov bases. See~\cite{BCQ,GSZ} for further details.

\begin{defn}
Let $Z$ be a set, $\leq$ a linear order on $\frakM(Z)$ and $f \in \bfk\mapm{Z}$.
\begin{enumerate}
\item
Let $f \notin \bfk$. The {\bf leading monomial} of $f$, denoted by $\bar{f}$, is the largest monomial appearing in $f$. The {\bf  leading coefficient of $f$}, denoted by $c_f$, is the coefficient of $\bar{f}$ in $f$.
We call $f$ {\bf  monic} with respect to $\leq$ if $c_f=1$.
\label{item:monic}

\item If $f \in \bfk$ (including the case $f = 0$), we define the {\bf  leading monomial of $f$} to be $1$ and the {\bf  leading coefficient of $f$} to be $c_f= f$.\label{item:scalar}

\item A subset $S\subseteq \bfk\mapm{Z}$ is called {\bf monicized with respect to $\leq$} if each element of $S$ is replaced by its quotient over the coefficient of its leading monomial, and hence is monic.
\end{enumerate}
\label{def:irrS}
\end{defn}

\begin{defn} Let $Z$ be a set.
A {\bf  monomial order on $\frakM(Z)$} is a
well-order $\leq$ on $\frakM(Z)$ such that
\begin{equation}
u < v \Longrightarrow q|_u < q|_v \quad  \text{for all } u, v \in \frakM(Z) \text{ and } q \in \frakM^{\star}(Z).
\label{eq:morder}
\end{equation}
We denote $u < v$ if $u\leq v$ but $u\neq v$.
\label{de:morder}
\end{defn}

Since $\leq$ is a well-order, it follows from Eq.\,(\ref{eq:morder}) that $1 \leq u$ and $u < \lc u \rc$ for all $u \in \frakM(Z)$.

\begin{remark}
If there is a linear order $\leq$ on $\frakM(Z)$, then in Definition~\mref{def:redsys}, we can take $\fix{s}$ as the leading monomial $\lbar{s}$ of $s$ with respect to $\leq$. We call  $\lbar{S}:= \{\lbar{s} \mid s\in S\}$ the {\bf orientation from} $\leq$, and
\begin{equation*}
\Pi_{S} = \Pi_{S, \lbar{S}}=\{\, q|_{\lbar{s}}\to  q|_{R(s)} \mid  s \in S, ~q \in \frakM^\star(Z)\,\} \subseteq \mapm{Z} \times \bfk\mapm{Z}
\end{equation*}
the {\bf term-rewriting system from} $\leq$.
\end{remark}

Let $f\in \frakM(Z)$ with $f \ne 1$. Then $f$ can be uniquely written as a product $f_1\cdots f_n$, where $n\leq 1$ and $f_i\in Z\cup \lc \frakM(Z)\rc$ for
$1\leq i\leq n$. We call $n$ the {\bf  breadth} of $f$, denoted
by $|f|$. If $f = 1$, we define $|f| = 0$.

\begin{defn}
Let $\leq$ be a monomial order on $\frakM(Z)$ and $f, g \in \bfk\mapm{Z}$ be monic.
\begin{enumerate}
\item If there are $w, u, v\in \frakM(Z)$ such that $w=\bar{f}u=v\bar{g}$ with $\max\{\bre{\bar{f}},\bre{\bar{g}}\}<\bre{w}< \bre{\bar{f}}+\bre{\bar{g}}$, we call
$$(f,g)^{u,v}_w:=fu-v g$$
the {\bf  intersection composition of $f$ and $g$ with respect to $w$}.
\item  If there are $w\in \frakM(Z)$ and $q\in \frakM^{\star}(Z)$ such that $w =\bar{f}=q|_{\bar{g}}$,
we call
$$(f,g)^{q}_w:=f-q|_{g}$$ the {\bf including
composition of $f$ and $g$ with respect to $w$}.
\end{enumerate}
\end{defn}

\begin{defn}{\rm
Let $Z$ be a set and $\leq$ a monomial order on $\mapm{Z}$.
\begin{enumerate}
\item An element $f \in \bfk\mapm{Z}$ is called {\bf  trivial modulo $(S, w)$} if
$$f = \sum_ic_iq_i|_{s_i} \text{ with } q_i|_{\overline{s_i}}< w, \text{ where } c_i\in \bfk, q_i \in \frakM^{\star}(Z), s_i\in S.$$

\item Let $S\subseteq \bfk\mapm{Z}$. Then $S$ is called a {\bf  Gr\"{o}bner-Shirshov basis} in $\bfk\frakM(Z)$ with respect to $\leq$ if,
for all pairs $f,g\in S$ monicized with respect to $\leq$, every intersection composition of the form $(f,g)_w^{u,v}$
is trivial modulo $(S,w)$, and every including composition of the form $(f,g)_w^q$ is trivial modulo $(S,w)$.
\label{de:GS}
\end{enumerate}
}\end{defn}

By convention, the polynomial 0 is trivial modulo $(S, w)$ for any $S$ and $w$.
The Composition-Diamond Lemma is the corner stone of the theory of Gr\"{o}bner-Shirshov bases.

\begin{theorem} {\rm (Composition-Diamond Lemma \cite{BCQ,GSZ})}
Let $Z$ be a set, $\leq$ a monomial order on $\mapm{Z}$ and $S \subseteq \bfk\mapm{Z}$. Then the following conditions are equivalent.
\begin{enumerate}
\item $S $ is a Gr\"{o}bner-Shirshov basis in $\bfk\mapm{Z}$.
\label{it:cd1}

\item Let $\eta\!: \bfk\mapm{Z} \to \bfk\mapm{Z}/\Id(S)$ be the quotient homomorphism of\, $\bfk$-spaces. Denote
\begin{equation}
\irr(S):= \frakM(Z) \setminus \{ q|_{\lbar{s}} \mid s\in S \}.
\mlabel{eq:irrs}
\end{equation}
    As a $\bfk$-space, $\bfk\mapm{Z}=\bfk\Irr(S)\oplus \Id(S)$
and $\eta(\Irr(S))$ is a $\bfk$-basis of $\bfk\mapm{Z}/\Id(S)$.
 \label{it:cd4}
\end{enumerate}
\mlabel{thm:cdl}
\end{theorem}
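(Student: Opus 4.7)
The plan is to prove the equivalence in two directions: $(1) \Rightarrow (2)$ by an existence-plus-uniqueness normal form argument on the well-order, and $(2) \Rightarrow (1)$ by exploiting the direct sum decomposition to rewrite any composition as a trivial combination.

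For $(1) \Rightarrow (2)$, I would first establish the spanning property $\bfk\mapm{Z} = \bfk\Irr(S) + \Id(S)$ by a straightforward well-founded induction: given $f \in \bfk\mapm{Z}$, if $\bar f \in \Irr(S)$ subtract $c_f \bar f$ and induct on the leading monomial of the remainder, while if $\bar f = q|_{\bar s}$ for some $s \in S$ monic then $f - c_f\, q|_{s}$ has strictly smaller leading monomial and one continues by well-ordering. The decisive step is the directness $\bfk\Irr(S) \cap \Id(S) = \{0\}$. Assume for contradiction $0 \neq f \in \bfk\Irr(S) \cap \Id(S)$ and choose a presentation $f = \sum_{i} c_i q_i|_{s_i}$ with $q_i \in \frakM^\star(Z)$, $s_i \in S$ (monic), minimising $w := \max_i q_i|_{\bar{s_i}}$ under the monomial order. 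If $\bar f < w$, then at least two summands, say indices $i \neq j$, satisfy $q_i|_{\bar{s_i}} = q_j|_{\bar{s_j}} = w$, and the combinatorics of bracketed words forces the pair $(\bar{s_i}, \bar{s_j})$ to overlap inside $w$ in exactly one of the two configurations of Definition of compositions: either as an including composition (one leading monomial is a subword of the other, realised through some $\star$-bracketed word) or as an intersection composition (the leading monomials overlap with $\max\{|\bar{s_i}|,|\bar{s_j}|\} < |w| < |\bar{s_i}|+|\bar{s_j}|$). Since $S$ is a Gröbner--Shirshov basis, the corresponding composition is trivial modulo $(S, w)$, which lets me rewrite $c_i q_i|_{s_i} + c_j q_j|_{s_j}$ as a $\bfk$-combination of terms $q|_{s}$ with $q|_{\bar s} < w$. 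Iterating this shrinks either $w$ or the number of indices attaining $w$, contradicting minimality of $w$ and ultimately forcing $\bar f = w = q|_{\bar s}$ for some $s \in S$ and $q \in \frakM^\star(Z)$, which contradicts $f \in \bfk\Irr(S)$.

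For $(2) \Rightarrow (1)$, take any composition $h = (f,g)^{u,v}_w$ or $h = (f,g)^q_w$ of monic elements of $S$. By construction $h \in \Id(S)$, and cancellation of the leading terms of $\bar f \cdot u$ with $v \cdot \bar g$ (respectively of $\bar f$ with $q|_{\bar g}$) gives $\bar h < w$. The decomposition from (2) says $\bfk\mapm{Z} = \bfk\Irr(S) \oplus \Id(S)$, hence $\Id(S) \cap \bfk\Irr(S) = 0$. Now apply the spanning-plus-reduction procedure of the previous paragraph to $h \in \Id(S)$: each reduction step replaces the current leading monomial $\bar h'$ (which satisfies $\bar h' \leq \bar h < w$ throughout, since the monomial order is compatible with $q|_{-}$-substitution) by something $q|_{s}$ with $q|_{\bar s} = \bar h' < w$. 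Since any $\bfk\Irr(S)$-remainder would lie in $\Id(S) \cap \bfk\Irr(S) = 0$, the procedure terminates with a presentation $h = \sum_i c_i q_i|_{s_i}$ in which every $q_i|_{\bar{s_i}} < w$, i.e., $h$ is trivial modulo $(S, w)$.

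The main obstacle is the case analysis in the directness step of $(1) \Rightarrow (2)$: showing that whenever two bracketed monomials $q_i|_{\bar{s_i}} = q_j|_{\bar{s_j}} = w$ coincide, the overlap fits exactly one of the two composition patterns. This is where the definitions of intersection and including composition, together with the monomial-order axiom $u < v \Rightarrow q|_u < q|_v$, are indispensable, and it is the combinatorial heart of the Composition--Diamond Lemma. The remainder of the argument is bookkeeping with well-founded induction on $\leq$. Since the result is already in \cite{BCQ,GSZ} for operated algebras, I would cite those proofs for the delicate overlap analysis rather than reproduce it in full.
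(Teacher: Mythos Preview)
The paper does not give its own proof of this theorem; it is quoted verbatim as the Composition--Diamond Lemma with a citation to \cite{BCQ,GSZ}, so there is nothing in the paper to compare your argument against beyond that reference.

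Your sketch follows the standard route used in those references. One point to tighten in the directness step of $(1)\Rightarrow(2)$: when $q_i|_{\bar{s_i}} = q_j|_{\bar{s_j}} = w$, the two placements need not overlap at all --- they may be \emph{separated} in the sense of the paper's later Definition~\ref{defn:bwrel} (and Proposition~\ref{pp:thrrel}) --- so it is not true that the pair is forced into one of the two composition patterns. In the separated case no composition hypothesis is needed: writing $w = p|_{\bar{s_i},\,\bar{s_j}}$ for some $p \in \frakM^{\star_1,\star_2}(Z)$, one has $q_i|_{s_i} - q_j|_{s_j} = p|_{s_i,\,\bar{s_j}} - p|_{\bar{s_i},\,s_j}$, and a short telescoping rewrite expresses this as a sum of terms $q|_s$ with $q|_{\bar s} < w$. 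Since you already defer the delicate overlap analysis to \cite{BCQ,GSZ}, this omission is minor, but you should mention the separated case explicitly rather than asserting that only the two composition configurations occur.
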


By~\cite{GGSZ,GSZ}, differential type OPIs and Rota-Baxter type OPIs (See Section~\ref{ss:thmexam} for definitions), which comprise all the OPIs that motivated Rota to have posed his classification problem except the Reynolds OPI, can be characterized by possessing Gr\"obner-Shirshov bases. This prompts us to introduce the following notions.

\begin{defn}
Let $X$ be a set and $\Phi\subseteq \bfk\frakM(X)$ a system of OPIs. Let $Z$ be a set and $\leq$
a monomial order on $\frakM(Z)$.
\begin{enumerate}
\item  We call $\Phi$ {\bf \gs  on $Z$ with respect to} $\leq$ if $S_\Phi(Z)$ is a Gr\"{o}bner-Shirshov basis in $\bfk \frakM(Z)$ with respect to $\leq$.

\item We call $\Phi$ {\bf \pgs on $Z$ with respect to} $\leq$ if there is a superset $\Phi'\subseteq \bfk\frakM(X)$ of $\Phi$ such that $\Id(S_\Phi(Z)) = \Id(S_{\Phi'}(Z))$ and
$\Phi'$ is \gs on $Z$ with respect to $\leq$.
\mlabel{it:pgsord}

\end{enumerate}
\label{def:pgord}
\end{defn}

\begin{defn}
Let $X$ be a set and $\Phi\subseteq \bfk\frakM(X)$ a system of OPIs.
\begin{enumerate}
\item  We call $\Phi$ {\bf \gs } (resp. {\bf \pgs}) if, for each set $Z$, there is a monomial order $\leq$ on $\frakM(Z)$ such that $\Phi$ is \gs (resp. {\bf \pgs}) on $Z$ with respect to $\leq$.
\item A $\Phi$-operator $P$ is called {\bf \gs} (resp. {\bf \pgs}) if $\Phi$ is.
\end{enumerate}
\label{def:sggood}
\end{defn}

\begin{exam}
A differential type OPI~\cite{GSZ}, defining a differential type operator $d = \lc \,\rc$, is
$$\phi=\phi(x_1,x_2) = \lc x_1 x_2\rc -N(x_1, x_2),$$
with $N(x_1,x_2)\in \bfk\mapm{x_1,x_2}$ satisfying certain conditions, to be recalled in Example~\ref{ex:difftype}. By \cite[Theorem.~5.7]{GSZ}, $S_\phi(Z)$ is a Gr\"{o}bner-Shirshov basis of $\Id(S_\phi(Z))$ with respect to a monomial order. Hence a differential type OPI is \gs. This fact will be proved directly in Example~\ref{ex:difftype}.
\mlabel{ex:diffopi}
\end{exam}

\begin{exam}
A Rota-Baxter type OPI~\cite{GGSZ}, defining a Rota-Baxter type operator, is
$$\phi=\phi(x_1,x_2) = \lc x_1\rc \lc x_2\rc - \lc B(x_1, x_2)\rc,$$
with $B(x_1,x_2)\in \bfk\mapm{x_1,x_2}$ satisfying certain conditions detailed in Example~\ref{ex:rbtype}.
It was shown in~\cite[Corollary~3.13,\,Theorem~4.9]{GGSZ}, and again in~Example~\ref{ex:rbtype},
that $S_\phi(Z)$ is a Gr\"{o}bner-Shirshov basis of $\Id(S_\phi(Z))$ with respect to a monomial order. Hence a Rota-Baxter type OPI is \gs.
\mlabel{ex:rbopi}
\end{exam}

\begin{exam}
As shown below in Theorem~\mref{thm:mrb}, a modified Rota-Baxter type OPI is \gs.
By~\cite[Theorems.~2.41,\,3.10]{GZ}, the system of (two-sided) averaging OPIs defined in Eq.~(\mref{eq:phi12}) is \pgs.
\end{exam}

We now propose another reformulation of Rota's Classification Problem.

\begin{problem} {\bf (Rota's Classification Problem via Gr\"{o}bner-Shirshov bases)} Determine all \gs and \pgs systems of OPIs.
\label{pr:rota}
\end{problem}

\section{Relationship between reformulations of Rota's Classification Problem}
\mlabel{sec:sgoodrew}

In this section, we establish the relationship between reformulations of Rota's Classification Problem.

\subsection{Term-rewriting systems}
We recall some basic results from \mcite{GGSZ} for \term-rewriting systems.
We will need the following Newman's lemma on rewriting systems.

\begin{lemma}{\rm (\cite[Lemma~2.7.2]{BN})}
 A terminating rewriting system is confluent if and only if it is locally confluent. \label{lem:newman}
 \end{lemma}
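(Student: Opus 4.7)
The forward direction is immediate from the definitions: if every fork $g_1 \lastarrow_\Pi f \astarrow_\Pi g_2$ is joinable, then in particular every local fork $g_1 \leftarrow_\Pi f \rightarrow_\Pi g_2$ (which is a special case) is joinable. So the content of the lemma lies entirely in the converse, and my plan is to prove it by Noetherian (well-founded) induction on elements of $V$, using the termination hypothesis to ensure that the relation $\rightarrow_\Pi^+$ is well-founded and hence supports such an induction principle.

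For the main direction, assume $\Pi$ is terminating and locally confluent. I will show that for every $f \in V$, any fork $g_1 \lastarrow_\Pi f \astarrow_\Pi g_2$ is joinable. Fix $f$ and assume inductively that the statement holds for every $f'$ with $f \rightarrow_\Pi^+ f'$. If either of the reductions $f \astarrow_\Pi g_1$ or $f \astarrow_\Pi g_2$ is a zero-step reduction, then $f = g_1$ or $f = g_2$, and the fork is trivially joinable via $h = g_2$ or $h = g_1$ respectively. Otherwise factor each as $f \rightarrow_\Pi f_i \astarrow_\Pi g_i$ for $i = 1, 2$.

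The heart of the argument is a triple application of the hypotheses. First, local confluence applied to the one-step local fork $f_1 \leftarrow_\Pi f \rightarrow_\Pi f_2$ yields some $h_0 \in V$ with $f_1 \astarrow_\Pi h_0$ and $f_2 \astarrow_\Pi h_0$. Next, since $f \rightarrow_\Pi f_1$ strictly descends in the well-founded order, the inductive hypothesis applies at $f_1$: from the fork $g_1 \lastarrow_\Pi f_1 \astarrow_\Pi h_0$ we obtain $h_1$ with $g_1 \astarrow_\Pi h_1$ and $h_0 \astarrow_\Pi h_1$. Similarly, applying the inductive hypothesis at $f_2$ to the fork $g_2 \lastarrow_\Pi f_2 \astarrow_\Pi h_0 \astarrow_\Pi h_1$ yields some $h$ with $g_2 \astarrow_\Pi h$ and $h_1 \astarrow_\Pi h$. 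Chaining, we get $g_1 \astarrow_\Pi h_1 \astarrow_\Pi h$ and $g_2 \astarrow_\Pi h$, establishing $g_1 \downarrow_\Pi g_2$.

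The only delicate point is to justify the Noetherian induction itself: terminating means there is no infinite chain $f_0 \rightarrow_\Pi f_1 \rightarrow_\Pi \cdots$, which is exactly the well-foundedness of $\rightarrow_\Pi^+$, so induction on this strict order is valid. No feature of the $\bfk$-linear structure of $V$ enters the argument; $\Pi$ is used purely as an abstract binary relation, which is why the classical abstract rewriting formulation from \cite{BN} transfers verbatim to our setting. I do not expect any real obstacle here beyond carefully tracking that each invocation of the inductive hypothesis is legitimately applied at an element strictly below $f$ in the termination order.
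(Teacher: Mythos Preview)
Your argument is correct and is the standard proof of Newman's Lemma via well-founded induction on the termination order. Note, however, that the paper does not actually prove this lemma: it simply cites \cite[Lemma~2.7.2]{BN} and moves on, so there is no in-paper proof to compare against. Your write-up is essentially the classical proof one finds in that reference.
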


The next results will also be used later.

\begin{lemma} {\rm (\cite[Proposition~2.18,\,Theorem~2.20]{GGSZ})}
Let $V$ be a $\bfk$-space with a given $\bfk$-basis $W$, and let
$\Pi$ be a simple \term-rewriting system on $V$ with respect to $W$.
\begin{enumerate}

\item $(f-g) \astarrow_\Pi 0$ implies $f \downarrow_\Pi g$  for all $f,g\in V$. \mlabel{it:join}

\item If\, $\Pi$ is confluent, then for all $f, g, h \in V$, $$f \downarrow_\Pi g,\  g \downarrow_\Pi h \implies f \downarrow_\Pi h.$$ \mlabel{it:trans}

\item If\, $\Pi$ is confluent, then
$$ f \downarrow_\Pi g,\  f' \downarrow_\Pi g' \implies (f + f') \downarrow_\Pi (g+g')\quad \forall f, g, f', g' \in V.$$ \mlabel{it:plus}

\item  If\, $\Pi$ is confluent, then, for all $m \geq 1$ and  $f_1, \dots, f_m, g_1, \dots, g_m \in V$, $$f_i \downarrow_\Pi g_i  \quad (1 \leq i \leq m), {\rm\ and\ } \sum_{i=1}^m g_i = 0 \implies \left(\sum_{i=1}^m f_i \right) \astarrow_\Pi 0.$$ \mlabel{it:sumzero}
\end{enumerate}
\mlabel{lem:red}
\end{lemma}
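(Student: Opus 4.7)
The plan is to prove the four claims in order, since (\mref{it:trans}) feeds into (\mref{it:plus}), and (\mref{it:plus}) feeds into (\mref{it:sumzero}). Throughout, the decisive structural fact is simplicity: when a rule $t\to v$ is applied to $f=c_t t\dps(-R_t(f))$, the hypothesis $t\dps v$ forces the coefficient of $t$ to become zero after rewriting, so a one-step rewriting is cleanly described by $f\mapsto f+c_t(v-t)$. This description behaves well under addition, which is the crux of the whole lemma.

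For part (\mref{it:join}), I would induct on the length $n$ of a chain $f-g\astarrow_\Pi 0$. The base $n=0$ is trivial. Otherwise, the first step rewrites $f-g$ at some $t\in\Supp(f-g)$ using a rule $t\to v$. Decomposing $f=\alpha t+f_0$ and $g=\beta t+g_0$ with $t\notin\Supp(f_0)\cup\Supp(g_0)$ gives $\alpha-\beta\ne 0$; the argument then splits according to which of $\alpha,\beta$ is nonzero. When both are nonzero I rewrite $f\to_\Pi f_1=\alpha v+f_0$ and $g\to_\Pi g_1=\beta v+g_0$, check directly (using $t\notin\Supp(v)$) that $f_1-g_1$ equals the one-step rewriting $k$ of $f-g$, and invoke induction to produce a common descendant. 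When exactly one of $\alpha,\beta$ vanishes only one of $f,g$ can be rewritten at $t$, but the same identity holds in the form $f_1-g=k$ (resp.\ $f-g_1=k$), and induction again finishes the step.

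Part (\mref{it:trans}) is a direct diagram chase: picking witnesses $h_1,h_2$ with $f,g\astarrow_\Pi h_1$ and $g,h\astarrow_\Pi h_2$, confluence applied to the fork at $g$ produces $k$ above both $h_1$ and $h_2$, and concatenation gives $f,h\astarrow_\Pi k$. For part (\mref{it:plus}) I would first establish the auxiliary claim that $f\astarrow_\Pi h$ implies $(f+k)\downarrow_\Pi(h+k)$ for every $k\in V$. This reduces by induction on chain length (with transitivity from (\mref{it:trans})) to the one-step case $f\to_\Pi h$ via rule $t\to v$ with coefficient $c_t$: letting $d_t$ be the coefficient of $t$ in $k$, simplicity gives that the coefficient of $t$ in $f+k$ is $c_t+d_t$ while that in $h+k$ is $d_t$. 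A short case split on whether $c_t+d_t=0$ and whether $d_t=0$ shows that either both sums rewrite in one step to a common element, or one rewrites in one step to the other. Granted the claim, the chain
$$
(f+f')\downarrow_\Pi(h+f')\downarrow_\Pi(h+h')\downarrow_\Pi(g+h')\downarrow_\Pi(g+g'),
$$
collapsed by transitivity (\mref{it:trans}), yields (\mref{it:plus}).

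Part (\mref{it:sumzero}) falls to induction on $m$. The base $m=1$ amounts to $f_1\downarrow_\Pi 0$: any common descendant must also be a descendant of $0$, which is a normal form, so the descendant is $0$ and $f_1\astarrow_\Pi 0$. For $m\ge 2$, I use (\mref{it:plus}) to combine the first two pairs into $(f_1+f_2)\downarrow_\Pi(g_1+g_2)$, obtaining $m-1$ pairs whose bottom terms still sum to zero, and apply the induction hypothesis. The genuinely delicate piece of the whole lemma is the combination of part (\mref{it:join}) and the one-step claim inside (\mref{it:plus}); both demand careful bookkeeping of the direct-sum decomposition $f=c_t t\dps(-R_t(f))$ under addition, and both use simplicity essentially. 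The remaining parts are formal consequences.
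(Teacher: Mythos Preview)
Your proof is correct in all four parts; the argument you give for (\mref{it:join}) via induction on chain length with the $\alpha,\beta$ case split, the standard confluence diagram chase for (\mref{it:trans}), the additive auxiliary claim and its case analysis in (\mref{it:plus}), and the reduction of (\mref{it:sumzero}) to the base case ``$0$ is a normal form'' are all sound and use simplicity exactly where it is needed.

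There is, however, nothing to compare against in this paper: the lemma is quoted verbatim from \cite[Proposition~2.18, Theorem~2.20]{GGSZ} and no proof is reproduced here. Your write-up is in the spirit of the arguments one finds in that reference (the same induction on rewriting length and the same additive stability claim), so it would serve perfectly well as a self-contained substitute for the citation.
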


\begin{remark}
If $\Pi$ is confluent and $f \downarrow_\Pi g$, together with the fact $-g \downarrow_\Pi -g$, we get $f-g\astarrow_\Pi 0$ by Lemma~\mref{lem:red}(\mref{it:sumzero}).
\mlabel{re:tozero}
\end{remark}

The following is a stronger condition than locally confluence.

\begin{defn}
Let $V$ be a $\bfk$-spaces with a $\bfk$-basis $W$ and let $\Pi$ be a \simple term-rewriting system on $V$ with respect to $W$.
\begin{enumerate}
\item  \emph{ A local base-fork} is a fork $(k t \to_\Pi k v_1, k t \to_\Pi k v_2)$, where $k \in \nbfk$ and $t\to v_1, t\to v_2 \in \Pi$. The \term-rewriting system $\Pi$ is \emph{locally base-confluent} if for every local base-fork $(k t \to_\Pi k v_1, k t \to_\Pi k v_2)$, we have $k (v_1 - v_2) \astarrow_\Pi 0$.

\item We say that $\Pi$ is \emph{ compatible with a linear order $\leq$ on $W$} if $\bar{v} < t$ for each $t\to v \in \Pi$.
    \mlabel{it:pcomo}
\end{enumerate}
\mlabel{defn:lbcom}
\end{defn}

\begin{lemma}{\rm (\cite[Lemma~2.22]{GGSZ})} Let $V$ be a $\bfk$-space with a $\bfk$-basis $W$ and let $\Pi$ be a term-rewriting system on $V$ which is compatible with a well order $\leq$ on $W$. If $\Pi$ is locally base-confluent, then it is locally confluent.
\mlabel{lem:ltcon}
\end{lemma}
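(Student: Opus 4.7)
My plan is to take an arbitrary local fork $(f \to_\Pi g_1,\, f \to_\Pi g_2)$, arising from rules $t_1 \to v_1$ and $t_2 \to v_2$ with $t_1, t_2 \in \Supp(f)$, and prove $g_1 \downarrow_\Pi g_2$ by reducing the verification to the hypothesis of local base-confluence. First I would observe that compatibility with $\leq$ forces $\bar{v} < t$, hence $t \notin \Supp(v)$, for each rule $t \to v$; so $\Pi$ is automatically simple, and Lemma~\mref{lem:red}(\mref{it:join}) is available, reducing $g_1 \downarrow_\Pi g_2$ to exhibiting some one-step descendants whose difference rewrites to $0$.

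The argument then splits naturally into two cases. In the same-monomial case $t_1 = t_2 =: t$, the coefficient $c := c_t$ of $t$ in $f$ and the direct-sum complement coincide for the two rewrites, giving $g_1 - g_2 = c(v_1 - v_2)$. This is precisely what arises from the local base-fork $(ct \to_\Pi cv_1,\, ct \to_\Pi cv_2)$, so local base-confluence delivers $g_1 - g_2 \astarrow_\Pi 0$ directly, and Lemma~\mref{lem:red}(\mref{it:join}) finishes this case.

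In the distinct-monomial case $t_1 \neq t_2$, I may assume $t_2 < t_1$ by totality of $\leq$. Write $f = c_{t_1} t_1 + c_{t_2} t_2 \dps (-R)$ with $t_1, t_2 \notin \Supp(R)$; the two rewrites then produce $g_1 = c_{t_1} v_1 + c_{t_2} t_2 - R$ and $g_2 = c_{t_1} t_1 + c_{t_2} v_2 - R$. Compatibility gives $\bar{v_2} < t_2 < t_1$, so $t_1 \notin \Supp(v_2)$; hence the rule $t_1 \to v_1$ is still applicable to $g_2$, producing $g_2 \to_\Pi g_2' := c_{t_1} v_1 + c_{t_2} v_2 - R$. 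The remaining difference $g_1 - g_2' = c_{t_2}(t_2 - v_2)$ is killed in one step by the rule $t_2 \to v_2$ (again using $t_2 \notin \Supp(v_2)$ and $c_{t_2} \neq 0$), so $g_1 - g_2' \astarrow_\Pi 0$. By Lemma~\mref{lem:red}(\mref{it:join}) this yields $g_1 \downarrow_\Pi g_2'$, and combining with the step $g_2 \to_\Pi g_2'$ produces a common descendant of $g_1$ and $g_2$, as required.

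The main subtle point is the distinct-monomial case, where one must verify that the intended rule remains \emph{applicable} after the other has already fired. The compatibility hypothesis does essential work here: because $\leq$ is a well-order and each rewrite strictly decreases the leading monomial it touches, the rewrite of the smaller monomial $t_2$ cannot reintroduce the larger $t_1$ in its support. This asymmetry is what lets the two rewrites be interleaved, reducing the multi-monomial local-confluence question to the single-monomial (base) one handled by hypothesis.
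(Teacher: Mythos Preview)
Your argument is correct. Note, however, that the present paper does not supply its own proof of this lemma: it is quoted verbatim from \cite[Lemma~2.22]{GGSZ}, so there is no in-paper proof to compare against. Your write-up is in fact the standard argument for this kind of statement and matches the spirit of the cited source: first observe that compatibility with a well-order forces $t\notin\Supp(v)$ for every rule $t\to v$, so $\Pi$ is simple and Lemma~\ref{lem:red}(\ref{it:join}) applies; then split a local fork according to whether the two rules fire on the same monomial or on distinct ones.

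Two small points worth making explicit in a polished version. In the distinct-monomial case you correctly chose to apply the rule at the \emph{larger} monomial $t_1$ to $g_2$; this is essential, since the reverse attempt (applying the rule at $t_2$ to $g_1$) could fail because $t_2$ may well lie in $\Supp(v_1)$. Second, when you invoke Lemma~\ref{lem:red}(\ref{it:join}) to pass from $(g_1-g_2')\astarrow_\Pi 0$ to $g_1\downarrow_\Pi g_2'$, and then combine with $g_2\to_\Pi g_2'$, you are implicitly using transitivity of $\astarrow_\Pi$ to conclude $g_1\downarrow_\Pi g_2$; this is immediate, but it is cleaner to state it. With those clarifications your proof is complete.
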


\begin{lemma}
Let $V$ be a $\bfk$-space with a given $\bfk$-basis $W$, and let
$\Pi$ be a simple \term-rewriting system on $V$ with respect to $W$. Let $f, g \in V$. If $f\astarrow_\Pi g$, then $kf\astarrow_\Pi kg $ for any $k\in \bfk$.
\mlabel{lem:cons}
\end{lemma}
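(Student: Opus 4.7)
The plan is to argue by induction on the number of one-step rewritings in a chain realizing $f\astarrow_\Pi g$, after first handling the degenerate scalar.

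First I would dispense with $k=0$: in that case $kf=0=kg$, and the reflexivity of $\astarrow_\Pi$ gives $0\astarrow_\Pi 0$ immediately. So I may assume $k\in\nbfk$.

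Next I would reduce to the one-step case. Since $\astarrow_\Pi$ is the reflexive-transitive closure of $\to_\Pi$, there exist $h_0,h_1,\dots,h_n\in V$ with $f=h_0\to_\Pi h_1\to_\Pi\cdots\to_\Pi h_n=g$. If $n=0$, then $kf=kg$ and the conclusion holds by reflexivity. Assuming inductively that $kh_1\astarrow_\Pi kh_n = kg$, it suffices to prove the one-step statement: if $f\to_\Pi h_1$ and $k\in\nbfk$, then $kf\to_\Pi kh_1$, after which concatenation with the inductive chain yields $kf\astarrow_\Pi kg$.

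The one-step case is where all the content sits. By Definition~\ref{def:ARSbasics}(\ref{item:Trule}), there is a rule $t\to v\in\Pi$ and a decomposition $f=c_t t\dps(-R_t(f))$ with $c_t\in\nbfk$, and $h_1=c_t v-R_t(f)$. Scaling by $k\in\nbfk$ gives
\begin{equation*}
kf = (kc_t)\,t - k R_t(f).
\end{equation*}
I need to verify that this is still a valid direct-sum decomposition so that the rule $t\to v$ applies. For this I use two facts: (i) since $\Pi$ is simple, $t\dps v$, which is irrelevant at this step but will matter implicitly through the original decomposition, and (ii) multiplication by a nonzero scalar preserves supports, so $\Supp(-kR_t(f))=\Supp(-R_t(f))$ does not contain $t$. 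Hence $kf=(kc_t)t\dps(-kR_t(f))$ is a genuine direct-sum decomposition with leading coefficient $kc_t\in\nbfk$, and applying the rule $t\to v$ yields the one-step rewriting
\begin{equation*}
kf \to_\Pi (kc_t)v - kR_t(f) = k\bigl(c_t v - R_t(f)\bigr) = kh_1,
\end{equation*}
as required. Combining with the inductive hypothesis completes the proof.

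I do not anticipate a serious obstacle; the only subtle point is that one-step rewriting is defined relative to a specific direct-sum expression of the input, so one must check that scaling by a nonzero $k$ preserves this structural feature. The $k=0$ case is handled separately because it collapses the direct sum, but reflexivity makes it harmless.
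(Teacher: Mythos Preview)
Your proof is correct and follows essentially the same approach as the paper: handle $k=0$ separately, then induct on the length of the rewriting chain, with the base case reduced to checking that scaling a direct-sum decomposition $f=c_t t\dps(-R_t(f))$ by a nonzero $k$ yields another valid direct-sum decomposition (using that $\bfk$ is a field so $kc_t\neq 0$). Your remark that simplicity of $\Pi$ is not actually used in this argument is accurate; the paper states the hypothesis but likewise makes no use of it.
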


\begin{proof}
If $f=g$ or $k=0$, then $kf =kg$ and $kf \astarrow_\Pi kg$. Suppose $f\neq g$ and $k\neq 0$. Let $n\geq 1$ be the minimum step that $f$ rewrites to $g$ and
$$f=:f_0\tpi f_1\tpi \cdots \tpi f_n:=g.$$
We prove the result by induction on $n$. If $n=1$, we may write
$$f = ct\dps (-R_t(f))\, \text{ and }\, g = cv - R_t(f),$$
where $c\in \nbfk$ and $t\to v\in \Pi$. Then
$$kf = kct\dps (-kR_t(f))\, \text{ and }\, kg = kcv - kR_t(f).$$
Since $k,c\neq 0$ and $\bfk$ is a field by our hypothesis, $kc\neq 0$ and so $kf \tpi kg$.
Assume that the result is true for $n \leq m$ and consider the case of $n=m+1$. Then by the induction hypothesis, we have $kf_0 \astarrow_\Pi kf_1$ and $kf_1\astarrow_\Pi kf_n$. Hence by the transitivity of $\astarrow_\Pi$ we have $kf = kf_0 \astarrow_\Pi kf_n = kg$, as required.
\end{proof}

The following concepts are adapted from general abstract rewriting
systems~\cite[Definition~1.1.6]{BKVT}.

\begin{defn}
Let $V$ be a $\bfk$-spaces with a $\bfk$-basis $W$ and let $\Pi$ be a \simple term-rewriting system on $V$ with respect to $W$. Let $Y\subseteq W$ and $\Pi_{\bfk Y} \subseteq Y\times \bfk Y$. We call $\Pi_{\bfk Y}$ a {\bf sub-term-rewriting system} of $\Pi$ on $\bfk Y$ with respect to $Y$, denoted by $\Pi_{\bfk Y} \leq \Pi$, if
\begin{enumerate}

\item $\Pi_{\bfk Y}$ is the restriction of $\Pi$, i.e., for any $f,g\in \bfk Y$, $f\to_{\Pi_{\bfk Y}} g \Leftrightarrow f\to_{\Pi} g$. \mlabel{it:restr}

\item $\bfk Y$ is closed under $\Pi$, i.e., for any $f\in \bfk Y$ and any $g\in V$,
$f\to_{\Pi} g$ implies $g\in \bfk Y$. \mlabel{it:closed}
\end{enumerate}
\mlabel{defn:subre}
\end{defn}

The following result characterizes the sub-term-rewriting system when $\Pi_{\bfk Y} =\Pi \cap (Y\times \bfk Y)$.

\begin{prop}
Let $V$ be a $\bfk$-space with a $\bfk$-basis $W$ and let $\Pi$ be a \simple term-rewriting system on $V$ with respect to $W$. Let $Y\subseteq W$ and $\Pi_{\bfk Y} :=\Pi \cap (Y\times \bfk Y)$. Then $\Pi_{\bfk Y}$ is a sub-term-rewriting system of $\Pi$ on $\bfk Y$ with respect to $Y$ if and only if $\bfk Y$ is closed under $\Pi$ in the sense of Definition~\mref{defn:subre}({\mref{it:closed}}).
\mlabel{pp:subre}
\end{prop}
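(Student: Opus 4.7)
The forward direction is immediate: if $\Pi_{\bfk Y}$ is a sub-term-rewriting system of $\Pi$ on $\bfk Y$, then condition~(\mref{it:closed}) of Definition~\mref{defn:subre} is, by definition, the assertion that $\bfk Y$ is closed under $\Pi$.

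For the backward direction, assume $\bfk Y$ is closed under $\Pi$ and set $\Pi_{\bfk Y}:=\Pi\cap(Y\times \bfk Y)$. Both parts of Definition~\mref{defn:subre} must be verified. Condition~(\mref{it:closed}) is precisely the hypothesis, so the task reduces to the restriction property~(\mref{it:restr}): for any $f,g\in \bfk Y$, one has $f\to_{\Pi_{\bfk Y}} g \Leftrightarrow f\to_{\Pi} g$. The forward implication is trivial, since $\Pi_{\bfk Y}\subseteq \Pi$ and any decomposition witnessing a one-step rewriting in $\Pi_{\bfk Y}$ also witnesses one in $\Pi$.

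The substance lies in the reverse implication. Suppose $f,g\in \bfk Y$ and $f\to_{\Pi} g$. By Definition~\mref{def:ARSbasics}(\mref{item:Trule}), there is a rule $t\to v\in \Pi$ with $t\in \Supp(f)$ such that $f = c_t t \dps (-R_t(f))$ and $g = c_t v - R_t(f)$, where $c_t\in\nbfk$. Since $\Supp(f)\subseteq Y$, one has $t\in Y$. The key observation is that the rule $t\to v$ also applies to the basis element $t$ itself, written trivially as $t = 1\cdot t \dps 0$, producing the one-step rewriting $t\to_{\Pi} v$; invoking the closure hypothesis on this rewriting yields $v\in \bfk Y$. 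Therefore $(t,v)\in \Pi\cap (Y\times \bfk Y) = \Pi_{\bfk Y}$. Since $R_t(f) = c_t t - f \in \bfk Y$, the very same decomposition $f = c_t t \dps (-R_t(f))$ lies entirely in $\bfk Y$ and witnesses $f\to_{\Pi_{\bfk Y}} g$.

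There is no serious obstacle here; the only small trick is producing $v\in \bfk Y$, which is handled by evaluating the rule $t\to v$ at the basis element $t$ and invoking closure. The remainder is a direct unwinding of the definitions of one-step rewriting and of $\Pi_{\bfk Y}$.
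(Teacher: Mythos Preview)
Your proof is correct and follows the same overall structure as the paper's: reduce to showing $v\in\bfk Y$ so that the rule $(t,v)$ lies in $\Pi_{\bfk Y}$. The one substantive difference is in how you obtain $v\in\bfk Y$. You apply the closure hypothesis directly to the one-step rewriting $t\to_\Pi v$ of the basis element $t\in Y$. The paper instead argues from the datum $g\in\bfk Y$: since $g=cv+f_1$ with $f_1\in\bfk Y$, one gets $cv\in\bfk Y$, and then invokes that $\bfk$ is a field (so $c\in\nbfk$ is invertible) to conclude $v\in\bfk Y$. Your route is slightly cleaner in that it uses the closure hypothesis where the paper's argument for this step does not, and in particular your argument does not depend on $\bfk$ being a field.
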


\begin{proof}
($\Rightarrow$) This direction follows from Definition~\mref{defn:subre}.

($\Leftarrow$) With Item~(\mref{it:closed}) of Definition~\mref{defn:subre} being our hypothesis, we only need to show that Item~(\mref{it:restr}) is valid, that is, $\Pi_{\bfk Y}$ is the restriction of $\Pi$ to $\bfk Y$.
Let $f,g\in \bfk Y$ with $f\to_{\Pi_{\bfk Y}} g$. Since $\Pi_{\bfk Y} =\Pi \cap (Y\times \bfk Y) \subseteq \Pi$, we have $f\to_{\Pi} g$. Conversely, suppose $f\to_{\Pi} g$. Write $f = ct\dps f_1$ and $g = cv+ f_1$, where $c\in \nbfk$, $t\in Y$, $v\in V$, $f_1\in \bfk Y$ and $t\to v\in \Pi$.
Since $g\in \bfk Y$ and $f_1\in \bfk Y$, we have $cv\in \bfk Y$. Since $W$ is a \bfk-basis of $V$ and $Y\subseteq W$, we may write $$v = \sum_i c_i y_i + \sum_j d_j x_j,\,\text{ where } c_i,d_j\in \bfk, y_i\in Y, x_j\in W\setminus Y.$$
Then $$ cv = \sum_i cc_i y_i + \sum_j cd_j x_j \in \bfk Y\,\text{ and }\,  \sum_j cd_j x_j \in \bfk Y,$$
and so $cd_j = 0$ for each $j$. Since $\bfk$ is a field and $c\neq 0$, we get $d_j = 0$ for each $j$, that is, $v\in \bfk Y.$
Thus $t\to v\in \Pi_{\bfk Y}$ and so $ ct\dps f_1 \to_{\Pi_{\bfk Y}} cv + f_1$, as required.
\end{proof}

The \term-rewriting system $\Pi_{S_\Phi(Z)}$ from a monomial order is simple. To show this, we need the following fact.

\begin{lemma}
Let $Z$ be a set and $\leq$ a monomial order on $\frakM(Z)$. If $\stars{q}{\fraku} = \stars{q}{\frakv}$ with $q\in \frakM^{\star}(Z)$ and $\fraku, \frakv \in  \frakM(Z)$, then $\fraku = \frakv$.
\mlabel{lem:mstars}
\end{lemma}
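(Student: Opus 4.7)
The plan is to argue by contradiction using the compatibility property of the monomial order (Eq.~(\ref{eq:morder}) in Definition~\ref{de:morder}) together with the fact that a well-order is in particular a linear order. The statement is essentially a cancellation principle: the substitution map $\fraku \mapsto q|_\fraku$ is injective on $\frakM(Z)$ because the monomial order refines it in a strictly monotone way.

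Concretely, suppose for contradiction that $\fraku \neq \frakv$. Since $\leq$ is a well-order on $\frakM(Z)$, it is in particular a total order, so one of $\fraku < \frakv$ or $\frakv < \fraku$ holds. By symmetry, assume $\fraku < \frakv$. Then the defining property of a monomial order, namely
\[
\fraku < \frakv \;\Longrightarrow\; q|_\fraku < q|_\frakv,
\]
yields $q|_\fraku < q|_\frakv$, and in particular $q|_\fraku \neq q|_\frakv$, contradicting the hypothesis $\stars{q}{\fraku} = \stars{q}{\frakv}$. Hence $\fraku = \frakv$.

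There is really no obstacle here; the lemma is a direct restatement of strict monotonicity of the substitution operation with respect to $\leq$. The only thing worth double-checking is that $\leq$ being a well-order indeed gives trichotomy on $\frakM(Z)$ so that the contradiction hypothesis splits into the two symmetric cases above, which is standard.
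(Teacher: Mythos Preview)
Your proof is correct and considerably shorter than the paper's own argument. You use the defining property of a monomial order directly: strict monotonicity of $u \mapsto q|_u$ forces injectivity, so $q|_\fraku = q|_\frakv$ with $\fraku \neq \frakv$ is immediately impossible once trichotomy gives $\fraku < \frakv$ or $\frakv < \fraku$.

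The paper instead argues by induction on the order of $q|_\fraku$, performing a structural case analysis on the first symbol of $q$ (a letter in $Z$, the symbol $\star$, or a bracket), and in each case strips off the prefix to reduce to a smaller instance. The monomial order is invoked only to justify that the induction parameter decreases, e.g.\ $q|_\fraku > p|_\fraku$ when $q = xp$. This approach essentially re-derives injectivity of substitution by hand, whereas your argument observes that injectivity is an immediate consequence of the compatibility condition~(\ref{eq:morder}) already built into Definition~\ref{de:morder}. Your route is both more direct and more transparent about \emph{why} the lemma holds; the paper's route has the minor virtue of making the structural mechanism explicit, but at the cost of several redundant cases.
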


\begin{proof}
We prove the result by induction on the order of $\stars{q}{\fraku}\geq \fraku$.
For the initial step, we have $\stars{q}{\fraku} = \fraku$. So $q = \star$ and $\fraku = \stars{q}{\fraku} = \stars{q}{\frakv} = \frakv$.
For the induction step, depending on the first symbol occurring in $q$ is a variable in $Z$, or a $\star$, or a bracket, we have the following cases to consider.

\noindent{\bf Case 1.} $q = xp$ for some $x\in Z$ and $p\in \frakM^{\star}(Z)$.
Then
$$x\stars{p}{\fraku} = \stars{q}{\fraku} = \stars{q}{\frakv} = x \stars{p}{\frakv},$$
and so $\stars{p}{\fraku} = \stars{p}{\frakv}$.
Since $\leq$ is a monomial order, we have $\stars{q}{\fraku} > \stars{p}{\fraku}$. By the induction hypothesis and $\stars{p}{\fraku} = \stars{p}{\frakv}$, we have $\fraku = \frakv$.

\noindent{\bf Case 2.} $q = \star w$ and $w\in \frakM(Z)$. Then $\fraku w =\stars{q}{\fraku} = \stars{q}{\frakv} = \frakv w$ and so $\fraku = \frakv$.

\noindent{\bf Case 3.} The first symbol in $q$ is a bracket. In this case, we have two subcases.

\noindent{\bf Case 3.1.} $q = \lc p\rc w$ for some  $p\in \frakM^\star(Z)$ and $w\in \frakM(Z)$. Then
$$\lc \stars{p}{\fraku} \rc w  = \stars{q}{\fraku} = \stars{q}{\frakv} = \lc \stars{p}{\frakv} \rc w $$
and so $\stars{p}{\fraku} = \stars{p}{\frakv}$. Since $\leq$ is a monomial order, we have $\stars{q}{\fraku}  > \stars{p}{\fraku} $. By the induction hypothesis and $\stars{p}{\fraku} = \stars{p}{\frakv}$, we get $\fraku = \frakv$.

\noindent{\bf Case 3.2.} $q = \lc w\rc p$ for some $w\in \frakM(Z)$ and $p\in \frakM^{\star}(Z)$. Then
$$\lc w\rc \stars{p}{\fraku} = \stars{q}{\fraku} = \stars{q}{\frakv} = \lc w\rc \stars{p}{\frakv}.$$
Thus $\stars{p}{\fraku} = \stars{p}{\frakv} $.
Again since $\leq$ is a monomial order, we get $\stars{q}{\fraku}  > \stars{p}{\fraku} $. By the induction hypothesis and $\stars{p}{\fraku} = \stars{p}{\frakv}$, we obtain $\fraku = \frakv$.
This completes the proof.
\end{proof}

\begin{lemma}
Let $Z$ be a set and $\leq$ a monomial order on $\frakM(Z)$. The $\Pi_{S_\Phi(Z)}$ from $\leq$ is a simple \term-rewriting system on $\bfk\mapm{Z}$.
\label{lemma:1nonrec}
\end{lemma}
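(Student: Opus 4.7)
The plan is to unravel what being simple means for the rewriting rules of $\Pi_{S_\Phi(Z)}$ and then to reduce the required disjointness of supports to the injectivity of the substitution map $q|_{(-)}$ just established in Lemma~\ref{lem:mstars}. A generic rule in $\Pi_{S_\Phi(Z)}$ has the form $q|_{\bar{s}} \to q|_{R(s)}$, where $s\in S_\Phi(Z)$ has been monicized with respect to its leading monomial $\bar{s}$, $q\in \frakM^{\star}(Z)$, and by construction $s = \bar{s}\dps (-R(s))$. In particular $\bar{s}\notin \Supp(R(s))$, and moreover every $w\in \Supp(R(s))$ satisfies $w < \bar{s}$ in the monomial order.

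To verify simplicity I would check that $\Supp(q|_{\bar{s}})\cap \Supp(q|_{R(s)}) = \emptyset$ for each such rule. The left-hand side is the singleton $\{q|_{\bar{s}}\}$, since $q|_{\bar{s}} \in \frakM(Z)$ is a single bracketed monomial. Writing $R(s) = \sum_i c_i w_i$ with pairwise distinct $w_i \in \Supp(R(s))$ and $c_i \in \nbfk$, linearity of substitution gives $q|_{R(s)} = \sum_i c_i\, q|_{w_i}$, and Lemma~\ref{lem:mstars} forces the bracketed monomials $q|_{w_i}$ to be pairwise distinct as well, so $\Supp(q|_{R(s)}) = \{q|_{w_i}\}_i$. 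If some $q|_{w_i}$ coincided with $q|_{\bar{s}}$, then Lemma~\ref{lem:mstars} would force $w_i = \bar{s}$, contradicting $\bar{s}\notin \Supp(R(s))$. Hence the two supports are disjoint and every rule is simple.

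I do not anticipate any real obstacle: all of the substance is packaged into the injectivity Lemma~\ref{lem:mstars}, which was proved just above by induction on the order of $\stars{q}{\fraku}$. The only point to keep track of is that the monicization in Definition~\ref{def:redsys} is what makes the clean decomposition $s = \bar{s}\dps (-R(s))$ available, so that $\bar{s}$ genuinely lies outside $\Supp(R(s))$; once this is in hand, the proof is a one-step application of that lemma together with the definition of $R(s)$.
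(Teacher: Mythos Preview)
Your proof is correct and follows essentially the same approach as the paper: both reduce the disjointness $q|_{\bar{s}}\dps q|_{R(s)}$ to the injectivity statement of Lemma~\ref{lem:mstars}, obtaining the contradiction $\bar{s}\in\Supp(R(s))$ (equivalently, $\bar{s}$ equals some strictly smaller monomial). Your extra observation that the $q|_{w_i}$ are pairwise distinct is correct but not strictly needed, since $\Supp(q|_{R(s)})\subseteq\{q|_{w_i}\}_i$ already suffices for the argument.
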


\begin{proof}
We only need to show that $q|_{\bar{\phi(\fraku)}} \dps q|_{\re{\phi(\fraku)}}$ for any
$q\in \frakM^\star(Z)$ and $\phi(\fraku)\in S_\Phi(Z)$. If $\re{\phi(\fraku)} = 0$, there is nothing to prove. Suppose $\re{\phi(\fraku)} \neq 0$ and write
$$\re{\phi(\fraku)} = \sum_{i=1}^m c_i \sumre{\phi(\fraku)}{i},$$
where $c_i\in \nbfk$ and $\sumre{\phi(\fraku)}{i}$, $1\leq i\leq m$, are mutually distinct monomials of $\re{\phi(\fraku)}$. If  $q|_{\bar{\phi(\fraku)}} \dps q|_{\re{\phi(\fraku)}}$ fails, from Definition~\mref{def:dps}, there is some $1\leq i\leq m$ such that $\stars{q}{\bar{\phi(\fraku)}} = \stars{q}{\sumre{\phi(\fraku)}{i}}$. So $\bar{\phi(\fraku)} = \sumre{\phi(\fraku)}{i}$ by Lemma~\mref{lem:mstars}, contradicting $\bar{\phi(\fraku)} > \sumre{\phi(\fraku)}{i}$.
\end{proof}

The following result gives a sufficient condition for terminating.

\begin{lemma}{\rm (\mcite{GGSZ})}
Let $Z$ be a set, let $\leq$ be a monomial order on $\frakM(Z)$, and let $S \subseteq \bfk\frakM(Z)$ be monic. Then $\Pi_S$ from $\leq$ is terminating.
\mlabel{lem:terminating}
\end{lemma}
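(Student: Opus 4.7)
The plan is to exhibit a well-founded measure on $\bfk\frakM(Z)$ that strictly decreases under every one-step rewriting, thereby forcing termination. The natural candidate is the support $\Supp(f)\subseteq \frakM(Z)$, compared via the Dershowitz--Manna multiset extension $\prec$ of the monomial well-order $\leq$. Since $\leq$ is a well-order on $\frakM(Z)$, the multiset extension $\prec$ on the finite multisets over $\frakM(Z)$ is itself well-founded, so once the strict decrease is established there can be no infinite chain.

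First I would analyze what happens in a single rewriting step. Suppose $f\to_{\Pi_S} g$ via a rule $q|_{\bar{s}}\to q|_{R(s)}$ with $s\in S$ monic and $q\in\frakM^\star(Z)$, and set $t:=q|_{\bar{s}}$. Monicity of $s$ gives $s=\bar{s}\dps(-R(s))$, so every monomial $u\in\Supp(R(s))$ satisfies $u<\bar{s}$. Applying the defining property~(\ref{eq:morder}) of a monomial order, $q|_u<q|_{\bar{s}}=t$ for each such $u$; hence every monomial in $\Supp(q|_{R(s)})$ is strictly less than $t$.

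Next I would translate this into a decrease of multisets. Writing $f=c_t t\dps(-R_t(f))$ with $c_t\neq 0$ and $g=c_t q|_{R(s)}-R_t(f)$, simpleness of $\Pi_S$ (Lemma~\ref{lemma:1nonrec}) guarantees $t\notin\Supp(q|_{R(s)})$, so $t\notin\Supp(g)$. Thus $\Supp(g)$ is obtained from $\Supp(f)$ by deleting the element $t$ and adjoining a finite collection of monomials all strictly less than $t$, possibly cancelling against existing elements of $\Supp(R_t(f))$. In the Dershowitz--Manna ordering this is a strict decrease $\Supp(g)\prec\Supp(f)$; any cancellations can only remove further elements from the multiset and hence strengthen, not weaken, the decrease.

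Finally, an infinite chain $f_0\to_{\Pi_S}f_1\to_{\Pi_S}\cdots$ would produce an infinite descending sequence $\Supp(f_0)\succ\Supp(f_1)\succ\cdots$ of finite multisets over $(\frakM(Z),\leq)$, contradicting well-foundedness of $\prec$. Therefore $\Pi_S$ terminates. The only mildly delicate bookkeeping lies in the cancellations between $c_t q|_{R(s)}$ and $-R_t(f)$ when forming $\Supp(g)$, but because cancellations only shrink multisets they do not threaten the strict decrease; the remainder of the argument is routine once the monomial-order axiom is in hand.
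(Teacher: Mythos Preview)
Your argument is correct and is the standard termination proof for rewriting systems compatible with a well-order: each one-step rewrite removes a monomial $t$ from the support and introduces only monomials strictly below $t$, so the support strictly decreases in the Dershowitz--Manna multiset extension of the well-order $\leq$, which is well-founded. The paper itself does not give a proof of this lemma; it merely cites \cite{GGSZ}, where essentially the same multiset argument appears. So there is nothing substantively different to compare.

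Two small remarks. First, your appeal to Lemma~\ref{lemma:1nonrec} for simpleness is both unnecessary and, strictly speaking, inapplicable: that lemma is stated only for $\Pi_{S_\Phi(Z)}$, not for $\Pi_S$ with an arbitrary monic $S$. You do not need it, since you have already shown in the preceding paragraph that every monomial of $q|_{R(s)}$ is strictly less than $t$, which immediately gives $t\notin\Supp(q|_{R(s)})$. Second, you are treating supports as sets rather than multisets; this is fine, since finite subsets of a well-ordered set still form a well-founded order under the Dershowitz--Manna comparison, but it is worth saying explicitly.
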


\subsection{\Gs OPIs and convergent OPIs}
In this subsection, we study the relationship between a \gs system of OPIs and a convergent system of OPIs.
In terms of $\star$-bracketed words, the operated ideals in $\bfk\mapm{Z}$ can be characterized~\cite{BCQ,GSZ} as follows.

\begin{lemma}{\rm (\cite[Lemma 3.2]{GSZ})}
Let $Z$ be a set and $S \subseteq \bfk\mapm{Z}$. Then
\begin{equation}\hspace{10pt}\Id(S) = \left\{\, \sum_{i=1}^n c_i q_i|_{s_i} \medmid n\geq 1
{\rm\ and\ } c_i\in \nbfk,
q_i\in \frakM^{\star}(Z), s_i\in S {\rm\ for\ } 1\leq i\leq n\,\right\}.
\label{eq:repgen}
\end{equation}
\mlabel{lem:opideal}
\end{lemma}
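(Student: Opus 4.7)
The plan is to prove the two inclusions separately, writing $I$ for the set on the right-hand side. This is the operated analogue of the standard characterization of an ideal generated by a subset, so the structure follows the usual template: elements of $\Id(S)$ are obtained by applying to elements of $S$ the operations available in an operated ideal, namely $\bfk$-linear combinations, left and right multiplication by elements of $\bfk\mapm{Z}$, and the unary operator $\lc\ \rc$. The content of the lemma is that every such composite operation can be reorganized into the claimed form $c\,q|_s$ with $q\in\frakM^\star(Z)$.

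For the inclusion $I\subseteq\Id(S)$, since $\Id(S)$ is a $\bfk$-subspace it suffices to show $q|_s\in\Id(S)$ for every $q\in\frakM^\star(Z)$ and $s\in S$. I would do this by induction on the length of $q$, viewed as a bracketed word in $\mapm{Z\cup\{\star\}}$. The base case $q=\star$ gives $q|_s=s\in S\subseteq\Id(S)$. For the inductive step, split according to the first factor of $q$: either a letter $x\in Z$, or the symbol $\star$ itself, or a bracketed expression $\lc p\rc$. In each case $q|_s$ decomposes as a product inside $\bfk\mapm{Z}$ with either $p'|_s$ or $\lc p|_s\rc$ appearing in a strictly shorter position, and the closure of $\Id(S)$ under multiplication and under $\lc\ \rc$, combined with the induction hypothesis, gives $q|_s\in\Id(S)$.

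For the reverse inclusion $\Id(S)\subseteq I$, I would show that $I$ is itself an operated ideal of $\bfk\mapm{Z}$ containing $S$; the minimality of $\Id(S)$ then yields the inclusion. Clearly $s=1\cdot\star|_s\in I$ for every $s\in S$, so $S\subseteq I$. The closure of $I$ under $\bfk$-linear combinations is immediate by concatenating sums (allowing $0$ as $q|_s-q|_s$, since the stated form requires $n\geq 1$ and $c_i\in\nbfk$). Closure under multiplication and under the operator reduces, by $\bfk$-linearity and the distributivity of substitution, to the three substitution identities $u\cdot(q|_s)=(uq)|_s$, $(q|_s)\cdot u=(qu)|_s$, and $\lc q|_s\rc=\lc q\rc|_s$ for $u\in\frakM(Z)$ and $q\in\frakM^\star(Z)$, where $uq$, $qu$, $\lc q\rc\in\frakM^\star(Z)$ are the evident $\star$-bracketed words built from $q$.

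I do not expect any serious obstacle. The only thing to verify carefully is the three substitution identities in the previous paragraph, but these are immediate from Definition~\ref{def:starbw}, which defines substitution into a $\star$-bracketed word by syntactic replacement of the symbol $\star$. The lemma is essentially a bookkeeping statement about the inductive generation of operated ideals.
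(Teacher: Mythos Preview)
The paper does not supply its own proof of this lemma; it is quoted verbatim from \cite[Lemma~3.2]{GSZ} and used as a black box. Your argument is correct and is the standard one: showing the right-hand side is contained in $\Id(S)$ by structural induction on $q$, and conversely checking that the right-hand side is itself an operated ideal containing $S$ so that minimality applies. One small point worth recording explicitly is the edge case $S=\emptyset$, where the displayed set (with $n\geq 1$) is empty while $\Id(\emptyset)=\{0\}$; this is a harmless defect of the statement rather than of your proof, and your device of writing $0=q|_s+(-1)q|_s$ handles $0\in I$ whenever $S\neq\emptyset$.
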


\begin{lemma}
Let $Z$ be a set, and let $\leq$ be a linear order on $\frakM(Z)$. Let $S \subseteq \bfk\frakM(Z)$ be monicized with respect to $\leq$, and let $\Pi_S$ be the \term-rewriting system from $\leq$. If $f\astarrow_{\pis} g$ for $f,g\in \bfk \frakM(Z)$, then $f-g\in \Id(S)$.
\mlabel{lem:inideal}
\end{lemma}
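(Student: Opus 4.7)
The plan is to split the argument into the one-step case and a telescoping induction on the number of rewriting steps, exploiting the fact that $\Id(S)$ is a $\bfk$-submodule of $\bfk\frakM(Z)$. Since $\astarrow_{\pis}$ is by definition the reflexive-transitive closure of $\to_{\pis}$, there is a chain $f = f_0 \to_{\pis} f_1 \to_{\pis} \cdots \to_{\pis} f_n = g$ for some $n \geq 0$. The case $n = 0$ gives $f - g = 0 \in \Id(S)$, so the whole statement will follow from handling one step and summing.

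For a single step $f \to_{\pis} g$, I would unpack the definitions as follows. By Definition~\mref{def:redsys}\,(\mref{it:fix1}) there exist $s \in S$ and $q \in \frakM^{\star}(Z)$ such that the rule $q|_{\bar{s}} \to q|_{R(s)}$ lies in $\pis$, where $R(s) = \bar{s} - s$ because $s$ is monic with respect to $\leq$. By Definition~\mref{def:ARSbasics}\,(\mref{item:Trule}), the one-step rewriting $f \to_{\pis} g$ means
$$ f \;=\; c\,q|_{\bar{s}} \dps h, \qquad g \;=\; c\,q|_{R(s)} + h, $$
for some $c \in \nbfk$ and some $h \in \bfk\frakM(Z)$. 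Using the linearity of the substitution $q|_{-}$ built into Definition~\mref{def:starbw}\,(3), I would then compute
$$ f - g \;=\; c\bigl(q|_{\bar{s}} - q|_{R(s)}\bigr) \;=\; c\, q|_{\bar{s} - R(s)} \;=\; c\, q|_{s}, $$
and this lies in $\Id(S)$ by Lemma~\mref{lem:opideal}.

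To close the induction, write
$$ f - g \;=\; \sum_{i=0}^{n-1} (f_i - f_{i+1}). $$
Each summand lies in $\Id(S)$ by the one-step case, and $\Id(S)$ is closed under $\bfk$-linear combinations, so $f - g \in \Id(S)$ as required.

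There is no substantial obstacle; the proof is essentially a routine unwinding of the rewriting-system definitions. The only points requiring mild care are honoring the direct-sum convention of Definition~\mref{def:dps} so that the residual piece $h$ cancels cleanly in $f - g$, and correctly identifying the ``rest'' $R(s)$ appearing in the rewriting rule with $\bar{s} - s$, which is exactly where the monicization hypothesis on $S$ is used.
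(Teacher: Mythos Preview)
Your proof is correct and follows essentially the same approach as the paper: the one-step case is handled identically (writing $f=c\,q|_{\bar{s}}\dps h$, $g=c\,q|_{R(s)}+h$, and computing $f-g=c\,q|_s\in\Id(S)$), and your telescoping sum is just an unwound form of the paper's induction on the length of the rewriting chain. The paper does not explicitly invoke Lemma~\mref{lem:opideal} at the end of the one-step case, but otherwise the arguments coincide.
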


\begin{proof}
If $f =g$, then $f-g = 0\in \Id(S)$. Suppose $f\neq g$.
Let $n\geq 1$ be the minimum number such that $f$ rewrites to $g$ by $n$ steps. We prove the result by induction on $n$. If $n=1$, then $f\to_{\pis} g$. Write
$$f = c q|_{\bar{s}} \dps f_1 \to_{\Pi_S}  c q|_{R(s)} + f_1 = g,$$
where $c\in \nbfk$, $s\in S$ and $f_1\in \bfk \frakM(Z)$.
Then
$$f-g = c q|_{\bar{s}} - c q|_{R(s)} = cq|_{\bar{s} - R(s)} = cq|_{s} \in \Id(S).$$
Assume that the result is true for $n=m\geq 1$ and consider the case of $n=m+1\geq 2$. Then we have $f\to_{\pis} h \astarrow_{\pis} g$ for some $f\neq h\in \bfk\frakM(Z)$. By the induction hypothesis, $f-h\in \Id(S)$ and $h-g\in \Id(S)$. Thus
$f-g\in \Id(S)$, as required.
\end{proof}

\begin{lemma}
Let $Z$ be a set, and let $\leq$ be a linear order on $\frakM(Z)$. Let $S \subseteq \bfk\frakM(Z)$ be monicized with respect to $\leq$, and let $\Pi_S$ be the \term-rewriting system from $\leq$.
\begin{enumerate}
\item If $\,\pis$ is confluent, then $u\in \Id(S)$ if and only if $u\astarrow_{\pis} 0$. \mlabel{it:idealto0}

\item  If $\,\pis$ is confluent, then $\Id(S)\cap \bfk \Irr(S) =0$. \mlabel{it:confcap}

\item If $\,\pis$ is terminating and $\Id(S)\cap \bfk \Irr(S) =0$, then $\pis$ is confluent. \mlabel{it:capconf}

\item If $\,\pis$ is terminating, then $\bfk\frakM(Z) = \Id(S) +  \bfk \Irr(S)$, where $\irr(S)= \frakM(Z) \setminus \{ q|_{\lbar{s}} \mid s\in S \}$.
\mlabel{it:tsum}
\end{enumerate}
\mlabel{lem:useful}
\end{lemma}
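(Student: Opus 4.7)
The four items are tightly linked: I will prove (1) first and deduce (2) from it; parts (3) and (4) will follow from Lemma~\ref{lem:inideal} together with termination, and (3) also invokes the conclusion of (2).

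For part (1), the implication $u \astarrow_{\pis} 0 \Rightarrow u \in \Id(S)$ is immediate from Lemma~\ref{lem:inideal} applied with $g = 0$. For the converse, given $u \in \Id(S)$, Lemma~\ref{lem:opideal} provides a representation $u = \sum_{i=1}^n c_i q_i|_{s_i}$ with $c_i \in \nbfk$, $q_i \in \frakM^{\star}(Z)$, $s_i \in S$. Fix $i$ and set $t := q_i|_{\bar{s_i}}$, $v := q_i|_{R(s_i)}$, so that $t \to v \in \pis$. The simple property (Lemma~\ref{lemma:1nonrec}) guarantees $t \notin \Supp(v)$, and combined with the monic-ness of $s_i$ this means that $c_i q_i|_{s_i} = c_i t \dps (-c_i v)$ is a legitimate direct-sum decomposition; applying the rewriting rule according to Definition~\ref{def:ARSbasics} yields the one-step rewriting $c_i q_i|_{s_i} \to_{\pis} c_i v - c_i v = 0$. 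Thus $c_i q_i|_{s_i} \downarrow_{\pis} 0$ for each $i$, and Lemma~\ref{lem:red}(\ref{it:sumzero}) applied with $g_i = 0$ gives $u = \sum_i c_i q_i|_{s_i} \astarrow_{\pis} 0$.

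Part (2) is a short consequence of (1): if $u \in \Id(S) \cap \bfk\Irr(S)$ then $u \astarrow_{\pis} 0$, but $\Supp(u) \subseteq \Irr(S) = \frakM(Z) \setminus \{q|_{\bar{s}} \mid s\in S\}$ is disjoint from $\mathrm{Dom}(\pis)$, so no rule of $\pis$ applies to $u$ and $u$ is already a normal form, forcing $u = 0$. For part (3), take any fork $g \lastarrow_{\pis} f \astarrow_{\pis} h$ and use termination to produce normal forms $g', h'$ with $g \astarrow_{\pis} g'$ and $h \astarrow_{\pis} h'$. Four applications of Lemma~\ref{lem:inideal} put $f-g$, $f-h$, $g-g'$, $h-h'$ in $\Id(S)$, whence $g' - h' \in \Id(S)$; on the other hand, by the argument of part (2), $g', h' \in \bfk\Irr(S)$, so $g' - h' \in \Id(S) \cap \bfk\Irr(S) = 0$ by hypothesis, giving $g' = h'$ and hence joinability, so $\pis$ is confluent. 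For part (4), termination gives every $u \in \bfk\frakM(Z)$ a normal form $u'$ with $u \astarrow_{\pis} u'$; then $u' \in \bfk\Irr(S)$, and $u - u' \in \Id(S)$ by Lemma~\ref{lem:inideal}, so $u = (u - u') + u' \in \Id(S) + \bfk\Irr(S)$.

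The one delicate step is the single-step rewriting $c_i q_i|_{s_i} \to_{\pis} 0$ in part (1). It hinges on unwinding the two properties of $\pis$ (being simple and generated by monic elements) to see that the coefficient $c_t$ of $t = q_i|_{\bar{s_i}}$ in $c_i q_i|_{s_i}$ is exactly $c_i$ and that $R_t(c_i q_i|_{s_i}) = c_i v$, so the one-step result $c_t v - R_t(\cdot) = c_i v - c_i v$ collapses to $0$. Once this elementary computation is settled, the rest of the proof is a clean assembly of results already established in the excerpt.
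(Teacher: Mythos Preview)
Your proof is correct and follows essentially the same approach as the paper's: part~(1) via Lemma~\ref{lem:opideal} and Lemma~\ref{lem:red}(\ref{it:sumzero}), part~(2) from part~(1) using that elements of $\bfk\Irr(S)$ are normal forms, part~(3) by reducing both branches of a fork to normal forms and invoking Lemma~\ref{lem:inideal}, and part~(4) by termination plus Lemma~\ref{lem:inideal}. One minor remark: your appeal to Lemma~\ref{lemma:1nonrec} for simplicity is stated there under the hypothesis of a \emph{monomial} order, whereas here only a linear order is assumed; the paper's own proof uses the same direct-sum decomposition $c_iq_i|_{s_i} = c_i q_i|_{\bar{s_i}} \dps (-c_i q_i|_{R(s_i)})$ without explicit justification, so this is not a difference in approach but a shared implicit step.
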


\begin{proof}
Note that $\bfk\irr(S)$ is precisely the set of normal forms for $\Pi_S$.

(\mref{it:idealto0}) If $u\astarrow_{\pis} 0$, then $u\in \Id(S)$ from Lemma~\mref{lem:inideal}.
Conversely, let $u\in \Id(S)$. By Eq.~(\mref{eq:repgen}), we have
$$ u = \sum_{i=1}^k c_i q_i|_{s_i}, \, \text{ where }\, c_i\in \nbfk, s_i\in S, q_i\in \frakM^\star(Z), 1\leq i\leq k.$$
For each $s_i = \lbar{s_i} \dps (-\re{s_i})$ with $1\leq i\leq k$, we have
$$c_iq_i|_{s_i} = c_i \stars{q_i}{\lbar{s_i}} \dps ( -c_i \stars{q_i}{\re{s_i}}) \to_{\pis} c_i \stars{q_i}{\re{s_i}} - c_i \stars{q_i}{\re{s_i}} =0 \, \text{ and so }\, c_iq_i|_{s_i}\downarrow_{\pis} 0.$$
Since $\pis$ is confluent,
by Lemma~\mref{lem:red}(\mref{it:sumzero}), we have $u = \sum\limits_{i=1}^k c_i q_i|_{s_i} \astarrow_{\pis} 0$.

(\mref{it:confcap}) Suppose $\Id(S)\cap \bfk \Irr(S)\neq 0$. Let $0\neq w\in \Id(S)\cap \bfk \Irr(S)$. Since $w\in \bfk \Irr(S)$, $w$ is in normal form. On the other hand, from $w\in \Id(S)$ and Item~(\mref{it:idealto0}), we have $w\astarrow_{\pis} 0 $. So $w$ has two normal forms $w$ and $0$, contradicting that $\pis$ is confluent.

(\mref{it:capconf}) Suppose to the contrary that $\pis$ is not confluent. Since $\pis$ is terminating, there is $w\in \bfk\frakM(Z)$ such that $w$ has two distinct normal forms, say $u$ and $v$. Thus $u,v\in \bfk\Irr(S)$ and so $u-v\in \bfk\Irr(S)$. From Lemma~(\mref{lem:inideal}), $w-u\in \Id(S)$ and $w-v\in \Id(S)$.
Hence $0\neq u-v\in\Id(S) \cap \bfk\Irr(S)$, a contradiction.

(\mref{it:tsum}) Let $w\in \bfk\frakM(Z)$. Since $\pis$ is terminating, there is $u\in \bfk \Irr(S)$ such that $w\astarrow_\Pi u$. From Lemma~\mref{lem:inideal}, we have $w-u\in \Id(S)$ and so $w\in \Id(S) + \bfk \Irr(S)$.
\end{proof}

\begin{theorem}
Let $Z$ be a set, and let $\leq$ be a monomial order on $\frakM(Z)$. Let $S \subseteq \bfk\frakM(Z)$ be monicized with respect to $\leq$, and let $\Pi_S$ be the \term-rewriting system from $\leq$. Then the following statements are equivalent.
\begin{enumerate}
\item $\pis$ is convergent. \mlabel{it:sconv}

\item $\pis$ is confluent. \mlabel{it:sconf}

\item $\Id(S) \cap \bfk \Irr(S) = 0$. \mlabel{it:scap}

\item $\Id(S) \oplus \bfk \Irr(S) =\bfk \frakM(Z)$. \mlabel{it:ssum}

\item $S$ is a Gr\"{o}bner-Shirshov basis in $\bfk\frakM(Z)$ with respect to $\leq$. \mlabel{it:sgsbasis}

\end{enumerate}
\mlabel{thm:convsum}
\end{theorem}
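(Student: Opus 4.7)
The plan is to deduce the chain of equivalences by assembling implications already collected in Lemma~\ref{lem:useful} together with the Composition-Diamond Lemma, the single new input being that $\leq$ is a monomial order. Applying Lemma~\ref{lem:terminating} to $\leq$ and the standing hypothesis that $S$ is monic gives termination of $\Pi_S$ once and for all, and this serves as the bridge between the rewriting-theoretic and the ideal-theoretic conditions throughout.

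First I would dispatch (\ref{it:sconv}) $\Leftrightarrow$ (\ref{it:sconf}): given termination, convergence is by definition equivalent to confluence. Next, (\ref{it:sconf}) $\Leftrightarrow$ (\ref{it:scap}) is obtained from Lemma~\ref{lem:useful}(\ref{it:confcap}) in the forward direction and Lemma~\ref{lem:useful}(\ref{it:capconf}) in the reverse direction, the latter using the termination just established.

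For (\ref{it:scap}) $\Leftrightarrow$ (\ref{it:ssum}), Lemma~\ref{lem:useful}(\ref{it:tsum}) combined with termination yields $\bfk\frakM(Z) = \Id(S) + \bfk\Irr(S)$ unconditionally; this sum is then a direct sum precisely when $\Id(S) \cap \bfk\Irr(S) = 0$, by elementary linear algebra.

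Finally, (\ref{it:ssum}) $\Leftrightarrow$ (\ref{it:sgsbasis}) is essentially Theorem~\ref{thm:cdl} (the Composition-Diamond Lemma): once the direct sum decomposition $\bfk\mapm{Z} = \Id(S) \oplus \bfk\Irr(S)$ holds, the quotient map $\eta$ restricts to a \bfk-linear isomorphism $\bfk\Irr(S) \cong \bfk\mapm{Z}/\Id(S)$, so the basis clause in Theorem~\ref{thm:cdl}(\ref{it:cd4}) asserting that $\eta(\Irr(S))$ is a basis of the quotient is automatic from the direct sum. Since every step is a direct citation of a prior result, no substantive obstacle is anticipated; the main care needed is to verify that the monomial order hypothesis feeds correctly into Lemma~\ref{lem:terminating} to make termination available throughout, and to record the trivial linear-algebraic equivalence of direct sum and zero intersection in the (\ref{it:scap}) $\Leftrightarrow$ (\ref{it:ssum}) step.
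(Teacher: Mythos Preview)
Your proposal is correct and follows essentially the same approach as the paper: establish termination once via Lemma~\ref{lem:terminating}, then obtain (\ref{it:sconv})$\Leftrightarrow$(\ref{it:sconf}) from the definition, (\ref{it:sconf})$\Leftrightarrow$(\ref{it:scap}) from Lemma~\ref{lem:useful}(\ref{it:confcap}),(\ref{it:capconf}), (\ref{it:scap})$\Leftrightarrow$(\ref{it:ssum}) from Lemma~\ref{lem:useful}(\ref{it:tsum}) plus the trivial linear-algebra observation, and (\ref{it:ssum})$\Leftrightarrow$(\ref{it:sgsbasis}) from Theorem~\ref{thm:cdl}. The paper's proof is slightly terser but cites exactly the same lemmas in the same roles.
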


\begin{proof}
Since $\leq $ is a monomial order on $\frakM(Z)$, $\pis$ is terminating by Lemma~\mref{lem:terminating}. So Item~(\mref{it:sconv}) and Item~(\mref{it:sconf}) are equivalent.
The equivalence of Item~(\mref{it:sconf}) and Item~(\mref{it:scap})
follows from Items~(\mref{it:confcap}) and~(\mref{it:capconf}) in Lemma~\mref{lem:useful}.

Clearly, Item~(\mref{it:ssum}) implies Item~(\mref{it:scap}). The converse employs Item~(\mref{it:tsum}) in Lemma~~\mref{lem:useful}. Finally, the equivalence of Item~(\mref{it:ssum}) and Item~(\mref{it:sgsbasis}) is obtained by Theorem~\mref{thm:cdl}.
\end{proof}

Now we are ready to give the relationship between the reformulations of Rota's Classification Problem.

\begin{theorem}
Let $\Phi \subseteq \bfk\frakM(X)$ be a system of OPIs.
\begin{enumerate}
\item For any set $Z$ and any monomial order $\leq$ on $\frakM(Z)$, $\Phi$ is \gs on $Z$ with respect to $\leq$ if and only if $\Phi$ is convergent on $Z$ with respect to the orientation $\lbar{S}_\Phi(Z) := \{\lbar{\phi(\fraku)} \mid \phi(\fraku)\in S_\Phi(Z) \}$ from $\leq$.
\mlabel{it:loiff}

\item
If $\Phi$ is \gs, then $\Phi$ is convergent.
\mlabel{it:geiff}

\item If $\Phi$ is \pgs, then $\Phi$ is potentially convergent.
\mlabel{it:geif}
\end{enumerate}
\mlabel{thm:sgoodtr}
\end{theorem}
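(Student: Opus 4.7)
The overall plan is to derive all three statements from Theorem~\ref{thm:convsum} applied to $S=S_\Phi(Z)$, with the real content concentrated in Part~(\ref{it:loiff}); Parts~(\ref{it:geiff}) and~(\ref{it:geif}) will follow by quantifying over $Z$ and unwinding the definitions.

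For Part~(\ref{it:loiff}), my first step is a direct notational matching: by Definition~\ref{def:redsys}(\ref{it:ori3}), the term-rewriting system associated with $S_\Phi(Z)$ with respect to the orientation $\bar{S}_\Phi(Z)$ from $\leq$ is literally the system $\Pi_{S_\Phi(Z)}$ from $\leq$ appearing in Theorem~\ref{thm:convsum}. With this identification in place, ``$\Phi$ is convergent on $Z$ with respect to $\bar{S}_\Phi(Z)$'' is by Definition~\ref{def:cwpp}(\ref{it:pwrtp}) synonymous with ``$\Pi_{S_\Phi(Z)}$ is convergent'', while ``$\Phi$ is \gs on $Z$ with respect to $\leq$'' is by Definition~\ref{def:pgord} synonymous with ``$S_\Phi(Z)$ is a Gr\"obner-Shirshov basis in $\bfk\frakM(Z)$ with respect to $\leq$''. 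The equivalence of items~(\ref{it:sconv}) and~(\ref{it:sgsbasis}) of Theorem~\ref{thm:convsum} then immediately delivers the biconditional.

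Part~(\ref{it:geiff}) will follow by feeding Part~(\ref{it:loiff}) into the quantifier over $Z$: if $\Phi$ is \gs, then for every set $Z$ some monomial order $\leq$ on $\frakM(Z)$ witnesses that $\Phi$ is \gs on $Z$ with respect to $\leq$, hence by Part~(\ref{it:loiff}) convergent on $Z$ with respect to the induced orientation $\bar{S}_\Phi(Z)$, and so $\Phi$ is convergent.

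For Part~(\ref{it:geif}) the plan is analogous but carried out through a superset. The \pgs hypothesis provides, for each $Z$, a superset $\Phi'$ of $\Phi$ with $\Id(S_\Phi(Z))=\Id(S_{\Phi'}(Z))$ such that $\Phi'$ is \gs on $Z$ with respect to some monomial order $\leq$. Part~(\ref{it:loiff}) applied to $\Phi'$ yields that $\Phi'$ is convergent on $Z$ with respect to the orientation $\bar{S}_{\Phi'}(Z)$ from $\leq$; since $\Phi\subseteq \Phi'$ we automatically have $\bar{S}_\Phi(Z)\subseteq \bar{S}_{\Phi'}(Z)$, which is exactly the containment condition demanded by Definition~\ref{def:cwpp}. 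Hence $\Phi$ is \pcs on $Z$, and ranging over $Z$ shows that $\Phi$ is \pcs. I expect no substantive obstacle here: the only delicate point is matching the ambient spaces for the $\Phi'$ that appear in the definitions of \pgs and \pcs, and this is routine bookkeeping once Part~(\ref{it:loiff}) has identified the two rewriting systems involved.
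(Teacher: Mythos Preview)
Your proposal is correct and follows essentially the same route as the paper: Part~(\ref{it:loiff}) is obtained by applying Theorem~\ref{thm:convsum} with $S=S_\Phi(Z)$, and Parts~(\ref{it:geiff}) and~(\ref{it:geif}) are then derived by quantifying over $Z$ and unwinding Definitions~\ref{def:sggood}, \ref{def:pgord} and~\ref{def:cwpp}. If anything, your argument for Part~(\ref{it:geif}) is slightly more explicit than the paper's, since you spell out the containment $\lbar{S}_\Phi(Z)\subseteq \lbar{S}_{\Phi'}(Z)$ required by Definition~\ref{def:cwpp} and flag the ambient-space mismatch between the $\Phi'$ of Definition~\ref{def:pgord}(\ref{it:pgsord}) (lying in $\bfk\frakM(X)$) and that of Definition~\ref{def:cwpp} (lying in $\bfk\frakM(Z)$), which the paper simply passes over.
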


\begin{proof}
(\mref{it:loiff}) Item~(\mref{it:loiff}) follows from applying Theorem~\mref{thm:convsum} to $S = S_\Phi(Z)$.

(\mref{it:geiff}) Suppose that $\Phi$ is \gs. By Definition~\mref{def:sggood}, for any set $Z$, there is a monomial order $\leq$ on $\frakM(Z)$ such that
$\Phi$ is \gs on $Z$ with respect to $\leq$. By Item~(\mref{it:loiff}), $\Phi$ is convergent on $Z$ with respect to the orientation $\lbar{S}_\Phi(Z)$ from $\leq$ and so is convergent.

(\mref{it:geif}) Suppose $\Phi$ is \pgs. From Definition~\mref{def:sggood}, for any set $Z$, there
is a monomial order on $\frakM(Z)$ such that $\Phi$ is \pgs on $Z$ with respect to $\leq$. By Definition~\mref{def:pgord}, there is a superset $\Phi'\subseteq \bfk\frakM(X)$ of $\Phi$ such that $\Id(S_\Phi(Z)) = \Id(S_{\Phi'}(Z))$ and $\Phi'$ is \gs on $Z$ with respect to $\leq$.
In view of Item~(\mref{it:loiff}), $\Phi'$ is convergent on $Z$ with respect to the the orientation $\lbar{S}_{\Phi'}(Z)$ from $\leq$. Hence $\Phi$ is potentially convergent.
\end{proof}

\begin{coro}
Let $\Phi$ be the system of (two-sided) averaging OPIs defined in Eq.~(\mref{eq:phi12}).
Then $\Phi$ is not \gs.
\mlabel{coro:avera}
\end{coro}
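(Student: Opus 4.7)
The plan is to derive this corollary as an immediate consequence of the already-established Proposition~\ref{pp:anotsg} together with Theorem~\ref{thm:sgoodtr}(\ref{it:geiff}). Proposition~\ref{pp:anotsg} has shown, by exhaustively examining the four possible orientation pairs for $\phi_1$ and $\phi_2$ over $Z=\{z_1,z_2\}$ and exhibiting two distinct normal forms of $w=\lc\lc z_1\rc\lc z_2\rc\rc$ in each case, that the system $\Phi = \{\phi_1,\phi_2\}$ of OPIs defining the two-sided averaging operator is not convergent. Theorem~\ref{thm:sgoodtr}(\ref{it:geiff}) asserts that any \gs system of OPIs must be convergent, so its contrapositive applied to $\Phi$ yields the claim at once.

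To make this self-contained, I would phrase the one-line argument as follows. Suppose for contradiction that $\Phi$ is \gs. Then by Definition~\ref{def:sggood}, for the particular set $Z=\{z_1,z_2\}$ there exists a monomial order $\leq$ on $\frakM(Z)$ such that $\Phi$ is \gs on $Z$ with respect to $\leq$. By Theorem~\ref{thm:sgoodtr}(\ref{it:loiff}), $\Phi$ must then be convergent on $Z$ with respect to the orientation $\lbar{S}_\Phi(Z)$ arising from $\leq$. However, the four cases in the proof of Proposition~\ref{pp:anotsg} cover every possible choice of leading monomial for $\phi_1$ and $\phi_2$ (which is exactly what any monomial order can produce as the orientation of $S_\Phi(Z)$ on $Z$), and each fails to be confluent. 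This contradiction forces $\Phi$ not to be \gs.

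There is no genuine obstacle here; the work has already been done in Proposition~\ref{pp:anotsg} and Theorem~\ref{thm:sgoodtr}, and the corollary functions mainly to record the conceptual point that Rota's Classification Problem via convergent rewriting systems is at least as permissive as via \gs bases, so non-convergence rules out being \gs. The only thing to be mindful of when writing it up is to invoke the correct half of Theorem~\ref{thm:sgoodtr}, namely part~(\ref{it:geiff}) rather than the potentially \gs variant~(\ref{it:geif}), since $\Phi$ will in fact be shown to be \pgs (and hence \pcs) elsewhere in the paper.
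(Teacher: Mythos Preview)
Your proposal is correct and follows precisely the paper's own argument: invoke Proposition~\ref{pp:anotsg} to conclude that $\Phi$ is not convergent, then apply the contrapositive of Theorem~\ref{thm:sgoodtr}(\ref{it:geiff}) to deduce that $\Phi$ is not \gs. The additional unpacking via part~(\ref{it:loiff}) and the explicit reference to the four orientation cases is accurate but more than the paper records.
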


\begin{proof}
By Proposition~\mref{pp:anotsg}, $\Phi$ is not convergent. From Theorem~\mref{thm:sgoodtr}~(\mref{it:geiff}), $\Phi$ is not \gs.
\end{proof}

\begin{remark}
By~\cite[Theorems~2.41, 3.10]{GZ}, the system of averaging OPIs $\Phi$ in Corollary~\mref{coro:avera} can be extended to a set of OPIs that is Gr\"{o}bner-Shirshov. Thus $\Phi$ is \pgs and hence is \pcs.
\mlabel{re:apconv}
\end{remark}

\section{A sufficient condition for \gs  OPIs} \mlabel{sec:aclass}
In this section, we provide a sufficient condition for an OPI to be \gs. In Section~\ref{ss:thmexam} we give the statement of the theorem and show that previously known examples of \gs OPIs can be easily verified by this theorem. As another application, we prove that the modified Rota-Baxter OPI is \gs. The proof of the theorem is given in Section~\ref{ss:proof}.

\subsection{Statement of the main theorem and examples}
\label{ss:thmexam}

Like the differential operator and Rota-Baxter operator, many operators are defined by a single OPI. In this subsection, we consider a single OPI $\phi$ and supply a method to prove that $\phi$ is \gs.

Let $\phi = \phi(x_1, \ldots, x_k) = \lbar{\phi} - R(\phi) \in \bfk\mapm{X}$ be an OPI. In the rest of this paper, we write $\phi(\frakx)$ for $\phi(x_1, \ldots, x_k)$ in short.
We call $\phi(\frakx)$ {\bf multiple linear} (or {\bf totally linear}) if $\phi(\frakx)$ is linear in each variable $x_i$, $1\leq i\leq k$.
Let $Z$ be a set. We say that an element $f\in \bfk\frakM(Z)$ is in {\bf $\phi$-normal form} if no monomial of $f$ contains any subword of the form $\bar{\phi(\fraku)}$ with $\fraku\in \frakM(Z)^k$.

\begin{theorem}
Let $\phi(\frakx)\in \bfk\frakM(X)$ be a multi-linear OPI such that $\re{\phi(\frakx)}$ is in $\phi$-normal form.
Suppose that, for any set $Z$, there is a monomial order $\leq$ on $\frakM(Z)$, such that the following two conditions hold:
\begin{enumerate}
\setlength{\itemsep}{3pt}

\item
if $\phi(\fraku), \phi(\frakv)\in S_\phi(Z)$ are such that
$\bar{\phi(\fraku)} = ab$ and $\bar{\phi(\frakv)} = bc$ for some $a,b,c\in \frakM(Z)$ and $\fraku,\frakv\in \frakM(Z)^k$, then $\re{\phi(\fraku)}c \downarrow_{\phi} a\re{\phi(\frakv)}$,
where $\Pi_\phi:= \Pi_{S_\phi(Z)}$ is the \term-rewriting system from $\leq$.
\mlabel{it:rb3}

\item
if $\lbar{\phi(\fraku)} = q|_{\lbar{\phi(\frakv)}}$ for some $\star\neq q\in \frakM^\star(Z)$ and $\fraku, \frakv \in \frakM(Z)^k$, then $\bar{\phi(\frakv)}$ is a subword of some $u_i$, $1\leq i\leq k$.
\mlabel{it:incl}
\end{enumerate}
Then $\phi(\frakx)$ is \gs, as is its defined operator.
\mlabel{thm:sgoodtype}
\end{theorem}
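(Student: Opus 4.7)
My plan is to show, for each set $Z$, that $S_\phi(Z)$ is a Gr\"obner--Shirshov basis in $\bfk\mapm{Z}$ with respect to the monomial order $\leq$ furnished by the hypothesis. By Theorem~\ref{thm:convsum} this is equivalent to convergence of the associated term-rewriting system $\Pi_\phi$. Termination is immediate from Lemma~\ref{lem:terminating} since $\leq$ is a monomial order. Moreover, each rule of $\Pi_\phi$ strictly lowers the leading monomial, so $\Pi_\phi$ is compatible with $\leq$; combining Newman's lemma (Lemma~\ref{lem:newman}) with Lemma~\ref{lem:ltcon} reduces the task to \emph{local base-confluence}. Thus it suffices, given a monomial $w$ together with two rewriting rules
\begin{equation*}
 w = q_1|_{\bar{\phi(\fraku_1)}} \to q_1|_{R(\phi(\fraku_1))}, \qquad
 w = q_2|_{\bar{\phi(\fraku_2)}} \to q_2|_{R(\phi(\fraku_2))},
\end{equation*}
to prove that the two right-hand sides $v_1,v_2$ satisfy $v_1-v_2 \astarrow_\phi 0$.

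I will split this analysis into three geometric cases according to how the marked subwords $\bar{\phi(\fraku_1)}$ and $\bar{\phi(\fraku_2)}$ are positioned in $w$. In the \emph{disjoint} case, rewriting the two positions independently in either order produces the same polynomial, so joinability is immediate and $v_1-v_2 \astarrow_\phi 0$ by Lemma~\ref{lem:red}. In the \emph{intersection-overlap} case, in which $\bar{\phi(\fraku_1)} = ab$ and $\bar{\phi(\fraku_2)} = bc$ with $a,b,c$ all nonempty, hypothesis~(a) directly furnishes $R(\phi(\fraku_1))\,c \downarrow_\phi a\,R(\phi(\fraku_2))$; transporting this joinability through the common outer context using Lemma~\ref{lem:red} delivers the required joinability of $v_1$ and $v_2$.

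The \emph{inclusion} case is the main obstacle. Here $\bar{\phi(\fraku_1)} = q_0|_{\bar{\phi(\fraku_2)}}$; when $q_0=\star$ the two leading terms coincide and local base-confluence is immediate, so assume $q_0\ne\star$. Hypothesis~(b) then forces $\bar{\phi(\fraku_2)}$ to lie inside a single coordinate of $\fraku_1$, say $u_{1,j} = p|_{\bar{\phi(\fraku_2)}}$. Setting $u_{1,j}' := p|_{R(\phi(\fraku_2))} \in \bfk\mapm{Z}$, my plan is to show that both $v_1 = q_1|_{R(\phi(\fraku_1))}$ and $v_2 = q_2|_{R(\phi(\fraku_2))}$ reduce under $\Pi_\phi$ to the common target
\begin{equation*}
 T \;:=\; q_1|_{R(\phi)(u_{1,1},\ldots,u_{1,j}',\ldots,u_{1,k})}.
\end{equation*}
On the outer side, $v_1$ contains the subword $\bar{\phi(\fraku_2)}$ at one copy for each occurrence of $x_j$ in $R(\phi)$, and--by the hypothesis that $R(\phi(\frakx))$ is in $\phi$-normal form--\emph{only} there; rewriting each such occurrence and collecting via multi-linearity of $R(\phi)$ yields $T$. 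On the inner side, $v_2 = q_1|_{\bar{\phi}(u_{1,1},\ldots,u_{1,j}',\ldots,u_{1,k})}$ by multi-linearity of $\bar{\phi}$ expands into a sum of monomials, each being the leading term of a concrete substitution of $\phi$; rewriting each in one $\Pi_\phi$-step and recollecting via multi-linearity of $R(\phi)$ again yields $T$.

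The technical heart of the argument is this inclusion case. Its execution requires (i) a minor verification that substitution commutes with taking leading monomials, so that the auxiliary rewriting rules invoked above genuinely belong to $\Pi_\phi$; (ii) using Lemma~\ref{lem:cons} and Lemma~\ref{lem:red} to interchange reduction with the $\bfk$-linear structure at each step; and (iii) careful bookkeeping of the multi-linear expansions on the outer and inner sides to confirm they terminate at the same element $T$. The $\phi$-normal-form hypothesis on $R(\phi(\frakx))$ is essential at step (iii), since it is exactly what rules out stray reducible subwords of $R(\phi(\fraku_1))$ coming from outside the $u_{1,j}$-slots, which would otherwise break the matching between the outer and inner reductions.
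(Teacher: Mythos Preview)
Your overall plan---reducing to local base-confluence and splitting into separated, intersecting, and nested placements---is exactly the decomposition the paper uses. However, your execution has a genuine circularity gap.

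The parts of Lemma~\ref{lem:red} that you invoke to ``interchange reduction with the $\bfk$-linear structure'' (additivity and transitivity of $\downarrow_\phi$, and the passage from $v_1\downarrow_\phi v_2$ to $v_1-v_2\astarrow_\phi 0$) all \emph{require $\Pi_\phi$ to be confluent}, which is precisely what you are trying to prove. Concretely: in the intersecting case you obtain $q|_{R(\phi(\fraku_1))c}\downarrow_\phi q|_{aR(\phi(\fraku_2))}$, but local base-confluence demands the stronger conclusion $v_1-v_2\astarrow_\phi 0$, and Remark~\ref{re:tozero} does not deliver this without confluence. In the separated and nested cases the problem is worse: you must rewrite a subword inside \emph{each} monomial of a $\bfk$-linear combination, but after one step the resulting polynomial may exhibit cancellations, and there is no guarantee the remaining target monomials are still present with the right coefficients. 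The paper resolves this by a minimal-counterexample (Noetherian induction) argument: assume the set $\frakC$ of monomials at which local base-confluence fails is nonempty, take its $\leq$-least element $w$, and restrict to the sub-term-rewriting system $\Pi_{\bfk Y}$ on $Y=\{u\in\frakM(Z)\mid u<w\}$. By minimality of $w$, $\Pi_{\bfk Y}$ \emph{is} confluent, so Lemma~\ref{lem:red} applies there; since all the intermediate elements you manipulate lie in $\bfk Y$ (they are strictly below $w$), the argument goes through inside $\Pi_{\bfk Y}$ and yields the desired contradiction. Your proposal is missing this induction layer.

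A smaller but separate gap: in the inclusion case with $q_0=\star$ you assert that ``the two leading terms coincide and local base-confluence is immediate.'' But $\bar{\phi(\fraku_1)}=\bar{\phi(\fraku_2)}$ does \emph{not} imply $\fraku_1=\fraku_2$ (for the differential-type OPI, $\lc xy\cdot z\rc=\lc x\cdot yz\rc$), so $R(\phi(\fraku_1))$ and $R(\phi(\fraku_2))$ can differ. The paper handles this by invoking hypothesis~(\ref{it:rb3}) with $a=c=1$ and $b=\bar{\phi(\fraku_1)}=\bar{\phi(\fraku_2)}$; you should do the same.
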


We postpone the proof of Theorem~\ref{thm:sgoodtype} to Section~\ref{ss:proof} and first give some remarks and examples. 

\begin{remark}
Condition~(\mref{it:rb3}) is a necessary condition for $\phi(\frakx)$ to be a \gs  OPI. Indeed, let $\phi(\fraku), \phi(\frakv)\in S_\phi(Z)$ with
$\bar{\phi(\fraku)} = ab$ and $\bar{\phi(\frakv)} = bc$ for some $a,b,c\in \frakM(Z)$. Since $\phi(\frakx)$ is \gs , $S_\phi(Z)$ is a Gr\"{o}bner-Shirshov basis by Definition~\mref{def:sggood}. By Theorem~\mref{thm:convsum}, the \term-rewriting system $\Pi_\phi = \Pi_{S_\phi(Z)}$ from $\leq$ is confluent.
So for the local fork
$$(abc = \bar{\phi(\fraku)}c \to_{\phi} \re{\phi(\fraku)}c,\  abc = a\bar{\phi(\frakv)} \to_{\phi} a \re{\phi(\frakv)}),$$
we have $  \re{\phi(\fraku)}c \downarrow_\phi a \re{\phi(\frakv)}$.
\end{remark}

\begin{remark}
As a counter-example of condition~(\ref{it:incl}), consider $\phi(x) = \lc\lc x \rc\rc$, $q = \lc \star\rc$, $\fraku = \lc x\rc$ and $\frakv = x$. Then
$$\lbar{\phi(\fraku)} = \lc\lc \fraku\rc\rc = \lc\lc\lc x\rc\rc\rc =\stars{q}{\lc\lc x\rc\rc} = \stars{q}{\lbar{\phi(\frakv)}}.$$
But $\lbar{\phi(\frakv)} = \lc\lc x\rc\rc$ is not a subword of $\fraku = \lc x\rc$.

However, Item~(\mref{it:incl}) is not a necessary condition for $\phi(\frakx)$ to be a \gs  OPI. For example, let $\leq$ be a monomial order on $\frakM(Z)$ and $\phi(x) = \lc\lc x\rc\rc$. Then we have a \term-rewriting system from $\leq$
$$\Pi_{S_\phi(Z)} = \{\stars{q}{\lc\lc\fraku\rc\rc} \to 0 \mid q\in \frakM^\star(Z), \fraku\in \frakM(Z) \},$$
which is confluent. By Theorem~\mref{thm:convsum}, $\phi(x)$ is a \gs OPI. But as explained just above, $\phi(x)$ does not satisfy condition~(\mref{it:incl}).

\end{remark}

\begin{exam} {\bf (Differential type OPI)} \label{ex:difftype}
A differential type OPI~\mcite{GSZ}, defining a differential type operator, is
$$\phi(x_1,x_2) = \lc x_1 x_2\rc  - N(x_1,x_2),$$
where
\begin{enumerate}
\item $N(x_1, x_2)$ is multi-linear in $x_1$ and $x_2$;
\item $N(x_1, x_2)$ is in $\phi(x_1, x_2)$-normal form;
\item For any set $Z$ and $u,v,w \in \frakM(Z)\setminus\{1\}$,
\begin{equation}
N(uv, w) - N(u, vw) \astarrow_\phi 0.
\mlabel{eq:diff0}
\end{equation}

\end{enumerate}

We verify that,  with respect the monomial order $\leq$ defined in~\cite{GSZ}, $\phi(x_1,x_2)$ satisfies the conditions~(\mref{it:rb3}) and~(\mref{it:incl}) in Theorem~\mref{thm:sgoodtype} and therefore is a \gs OPI. This gives another proof of~\cite[Theorem~5.7]{GSZ}. We begin with verifying the first condition. Let $\Pi_\phi = \Pi_{S_\phi(Z)}$ be the \term-rewriting system from $\leq$. Note that
$$\lbar{\phi(u_1,u_2)} = \lc u_1 u_2\rc\, \text{ and }\, \re{\phi(u_1, u_2)} = N(u_1, u_2)\,\text{ for }\, u_1, u_2\in \frakM(Z).$$
Let $\phi(u_1, u_2)$ and $\phi(v_1, v_2)$ be in $S_\phi(Z)$ such that
$$\bar{\phi(u_1, u_2)} = ab \, \text{ and }\, \bar{\phi(v_1, v_2)} = bc \, \text{ for some }\, u_1, u_2, v_1, v_2\in \frakM(Z)\setminus\{1\}, a,b,c\in \frakM(Z).$$
Then
$$\bar{\phi(u_1, u_2)}=\lc u_1  u_2\rc = ab\, \text{ and }\, \bar{\phi(v_1, v_2)}=\lc v_1 v_2\rc = bc.$$
So
\begin{equation}
a=c=1\, \text{ and }\, b= \lc u_1u_2\rc = \lc v_1v_2\rc.
\mlabel{eq:diffuv}
\end{equation}
Note that
\begin{equation}
\begin{aligned}
\bar{\phi(u_1, u_2)} &= \lc u_1  u_2\rc \to_\phi N(u_1, u_2)=\re{\phi(u_1, u_2)} = \re{\phi(u_1, u_2)}c,\\
\bar{\phi(v_1, v_2)} &= \lc v_1 v_2\rc \to_\phi N(v_1, v_2)=\re{\phi(v_1, v_2)} = a\re{\phi(v_1, v_2)}.
\end{aligned}
\mlabel{eq:diffnuv}
\end{equation}
From Eq.~(\mref{eq:diffuv}), we have $u_1u_2 = v_1v_2$. If $u_1 = v_1$, then $u_2 = v_2$ and $\phi(u_1,u_2) = \phi(v_1, v_2)$. So
$$\re{\phi(u_1, u_2)} \downarrow_\phi \re{\phi(v_1, v_2)},\quad \re{\phi(u_1, u_2)}c \downarrow_\phi a\re{\phi(v_1, v_2)},$$
by the fact that $a=c=1$ in Eq.~(\mref{eq:diffuv}). Suppose $u_1\neq v_1$. Since $u_1u_2 = v_1v_2$, either $u_1 = v_1v$ or $v_1 = u_1v$ for some $v\in \frakM(Z)\setminus\{1\}$. In the former case, we have $u_1u_2 = v_1vu_2 = v_1v_2$ and so $ vu_2= v_2$. From Eqs.~(\mref{eq:diff0}) and~(\mref{eq:diffnuv}),
$$\re{\phi(u_1, u_2)}c - a\re{\phi(v_1, v_2)} = N(u_1, u_2) -  N(v_1, v_2) =N(v_1v, u_2) -  N(v_1, vu_2)   \astarrow_\phi 0.$$
Using Lemma~\mref{lem:red}(\mref{it:join}),
$$\re{\phi(u_1, u_2)}c \downarrow_\phi a\re{\phi(v_1, v_2)}.$$
In the latter case of $v_1 = u_1v$, we get $u_2=vv_2$ and
$$a\re{\phi(v_1, v_2)} - \re{\phi(u_1, u_2)}c  = N(v_1, v_2) - N(u_1, u_2) =N(u_1v, v_2) -  N(u_1, vv_2)   \astarrow_\phi 0.$$
So $$a\re{\phi(v_1, v_2)} \downarrow_\phi \re{\phi(u_1, u_2)}c.$$
To verify condition~(\mref{it:incl}) in Theorem~\ref{thm:sgoodtype}, let
$$\lc u_1 u_2\rc = \bar{\phi(u_1, u_2)} = q|_{\bar{\phi(v_1, v_2)}} = q|_{\lc v_1 v_2\rc}$$
for some $\star\neq q\in \frakM^\star(Z)$ and $u_1, u_2, v_1,v_2\in \frakM(Z)\setminus\{1\}.$
Since $q\neq \star$, $\lc u_1u_2\rc \neq \lc v_1 v_2\rc$ and so $\lc v_1  v_2\rc$ is a subword of $u_1u_2$. Since the breadth of $\lc v_1v_2\rc$ is 1, $\lc v_1  v_2\rc$
is a subword of $u_1$ or $u_2$, as needed.
\label{ex:diff}
\end{exam}

\begin{exam} {\bf (Rota-Baxter type OPI)} 
A Rota-Baxter type OPI \mcite{GGSZ}, defining a Rota-Baxter type operator, is
$$\phi(x_1,x_2) = \lc x_1\rc\lc x_2\rc - \lc B(x_1,x_2)\rc,$$
where $B(x_1,x_2)$ satisfies
\begin{enumerate}
\item $B(x_1, x_2)$ is multi-linear in $x_1$ and $x_2$;
\item $B(x_1, x_2)$ is in $\phi(x_1, x_2)$-normal form;
\item The \term-rewriting system $\Pi_{S_\phi(Z)}$ is terminating;
\item For any set $Z$ and $u,v,w \in \frakM(Z)$,
\begin{equation}
B(B(u,v), w) - B(u, B(v,w)) \astarrow_\phi 0.
\mlabel{eq:rbuvw}
\end{equation}
\end{enumerate}
We show that $\phi(x_1,x_2)$ satisfies the two conditions in Theorem~\mref{thm:sgoodtype} with respect the monomial order $\leq_\db$ defined in~\cite{GGSZ} and therefore is a \gs OPI. This gives another proof of~\cite[Theorem~4.9]{GGSZ}. Let $\Pi_\phi = \Pi_{S_\phi(Z)}$ be the \term-rewriting system from $\leq_\db$. To verify condition~(\mref{it:rb3}) in Theorem~\mref{thm:sgoodtype},
note that
$$\lbar{\phi(u_1, u_2)} = \lc u_1\rc\lc u_2\rc\,\text{ and }\, \re{\phi(u_1, u_2)} = \lc B(u_1,u_2)\rc\,\text{ for }\, u_1, u_2\in \frakM(Z).$$
Let $\phi(u_1, u_2)$ and $\phi(v_1, v_2)$ be in $S_\phi(Z)$ such that
$$\bar{\phi(u_1, u_2)} = ab \, \text{ and }\, \bar{\phi(v_1, v_2)} = bc\, \text{ for some } u_1, u_2, v_1, v_2, a,b,c\in \frakM(Z).$$
Then
$$\bar{\phi(u_1, u_2)} = \lc u_1\rc \lc u_2\rc = ab \, \text{ and }\,  \bar{\phi(v_1, v_2)} = \lc v_1\rc \lc v_2\rc = bc.$$
Thus
\begin{equation}
a =\lc u_1\rc, b = \lc u_2\rc = \lc v_1\rc, c = \lc v_2\rc\, \text{ and }\, u_2 = v_1.
\mlabel{eq:uv}
\end{equation}
So
\begin{align*}
\re{\phi(u_1, u_2)}c &= \lc B(u_1, u_2)\rc\lc v_2\rc \to_{\phi} \lc B(B(u_1, u_2), v_2)\rc,\\
a \re{\phi(v_1, v_2)} &= \lc u_1\rc \lc B(u_2, v_2)\rc \to_{\phi} \lc B(u_1, B(u_2, v_2))\rc.
\end{align*}
It follows from Eq.~(\mref{eq:rbuvw}) that
\begin{align*}
&\re{\phi(u_1, u_2)}c - a \re{\phi(v_1, v_2)} = \lc B(B(u_1, u_2), v_2)-  B(u_1, B(u_2, v_2))\rc \astarrow_\phi 0.
\end{align*}
By Lemma~\mref{lem:red}(\mref{it:join}), we have
$$\re{\phi(u_1, u_2)}c \downarrow_\phi a \re{\phi(v_1, v_2)}.
$$
Hence condition~(\mref{it:rb3}) in Theorem~\mref{thm:sgoodtype} holds. For condition~(\mref{it:incl}) in Theorem~\mref{thm:sgoodtype}, let $$\lc u_1\rc \lc u_2\rc = \bar{\phi(u_1, u_2)} = q|_{\bar{\phi(v_1, v_2)}} = q|_{\lc v_1\rc \lc v_2\rc}$$
for some $\star\neq q\in \frakM^\star(Z)$ and $u_1, u_2, v_1,v_2\in \frakM(Z)$. Since $q\neq \star$, $\lc u_1\rc \lc u_2\rc \neq \lc v_1\rc \lc v_2\rc$ and so
$\lc v_1\rc \lc v_2\rc$ is a subword of $\lc u_1\rc$ or $\lc u_2\rc$. Since the breadth of $\lc u_1\rc$ is 1 and the breadth of $\lc v_1\rc \lc v_2\rc$ is 2, $\lc u_1\rc \neq \lc v_1\rc \lc v_2\rc$. Similarly, $\lc u_2\rc \neq \lc v_1\rc \lc v_2\rc$. Hence $\lc v_1\rc \lc v_2\rc$ is a subword of $u_1$ or $u_2$,
as required.
\mlabel{ex:rbtype}
\end{exam}

We finally give an application to an OPI that has been been considered in the context of Rota's Classification Problem before.
The {\bf modified Rota-Baxter OPI} of weight $\lambda$ is
$$\phi(x_1,x_2) = \lc x_1\rc \lc x_2\rc - \lc x_1\lc x_2\rc\rc - \lc \lc x_1\rc x_2\rc -\lambda x_1x_2,\,\text{ where } \lambda \in \bfk. $$
When $\lambda=-\mu^2$, this gives~\mcite{Fard}
$$ P(x_1)P(x_2)=P(x_1P(x_2))+P(P(x_1)x_2) - \mu^2 x_1x_2,$$
as an associative analog of the modified classical Yang-Baxter equation on Lie algebras~\cite{Sha}.
Note the subtle difference between this operator and the Rota-Baxter operator.

\begin{theorem}
The modified Rota-Baxter OPI is \gs.
\mlabel{thm:mrb}
\end{theorem}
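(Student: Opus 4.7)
The plan is to invoke Theorem~\mref{thm:sgoodtype} applied to $\phi(x_1,x_2)=\lc x_1\rc\lc x_2\rc - \lc x_1\lc x_2\rc\rc - \lc\lc x_1\rc x_2\rc-\lambda x_1x_2$. Multi-linearity in $x_1,x_2$ is immediate. For any set $Z$, I would take the monomial order $\leq_\db$ from Example~\mref{ex:rbtype}, under which $\bar{\phi(u_1,u_2)}=\lc u_1\rc\lc u_2\rc$ and
\[
R(\phi(u_1,u_2))=\lc u_1\lc u_2\rc\rc+\lc\lc u_1\rc u_2\rc+\lambda u_1u_2,
\]
which is in $\phi$-normal form since none of its three monomials contains the subword $\lc A\rc\lc B\rc$.

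The inclusion condition~(\mref{it:incl}) follows from the same breadth argument as in Example~\mref{ex:rbtype}: if $\lc u_1\rc\lc u_2\rc=q|_{\lc v_1\rc\lc v_2\rc}$ with $q\neq\star$, then $\lc v_1\rc\lc v_2\rc$ has breadth $2$ while $\lc u_1\rc$ and $\lc u_2\rc$ each have breadth $1$, forcing $\lc v_1\rc\lc v_2\rc$ to sit inside $u_1$ or $u_2$.

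The substantive step is the intersection condition~(\mref{it:rb3}). The overlap equations $\lc u_1\rc\lc u_2\rc=ab$ and $\lc v_1\rc\lc v_2\rc=bc$ force $a=\lc u_1\rc$, $b=\lc u_2\rc=\lc v_1\rc$, $c=\lc v_2\rc$, and in particular $u_2=v_1$. By Lemma~\mref{lem:red}(\mref{it:join}), it will suffice to establish
\[
D:=R(\phi(u_1,u_2))\lc v_2\rc-\lc u_1\rc R(\phi(u_2,v_2))\;\astarrow_\phi\;0.
\]
My plan is to expand $D$ as a six-term sum and then apply the rewriting rule $\lc A\rc\lc B\rc\to\lc A\lc B\rc\rc+\lc\lc A\rc B\rc+\lambda AB$ to each of the four monomials of $D$ that contain a $\lc A\rc\lc B\rc$ subword. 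In this first pass, the three $\lambda$-monomials $u_1\lc u_2\rc v_2$, $\lc u_1\rc u_2v_2$, $u_1u_2\lc v_2\rc$ appear with both signs and cancel, as do $\pm\lc\lc u_1\rc u_2\lc v_2\rc\rc$. Of the six surviving deeper bracketed monomials, exactly two, namely $\lc u_1\lc u_2\rc\lc v_2\rc\rc$ (with $+$) and $\lc\lc u_1\rc\lc u_2\rc v_2\rc$ (with $-$), still contain a $\lc\cdot\rc\lc\cdot\rc$ pattern. A second application of the rule to these two terms will produce exactly the remaining four monomials, together with two copies of $\lambda\lc u_1u_2v_2\rc$ of opposite signs, so the whole expression collapses to $0$.

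The main obstacle is the bookkeeping of this two-round cancellation. Because the weight term $\lambda x_1x_2$ sits outside the outer bracket, unlike in the Rota--Baxter type case where it sits inside a single bracket, a single application of the rule on both sides does not produce an immediate associativity-style identity analogous to~(\mref{eq:rbuvw}); one must trigger a second rewrite on the two bracket-products that persist after the first pass and track all generated monomials carefully. Once the computation is verified, Theorem~\mref{thm:sgoodtype} immediately yields that $\phi$ is \gs.
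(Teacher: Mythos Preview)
Your proposal is correct and follows essentially the same route as the paper: apply Theorem~\mref{thm:sgoodtype} with the monomial order $\leq_\db$, note that $\bar{\phi}=\lc x_1\rc\lc x_2\rc$ and $R(\phi)$ is in $\phi$-normal form, dispose of condition~(\mref{it:incl}) by the breadth argument from Example~\mref{ex:rbtype}, and verify condition~(\mref{it:rb3}) by the explicit two-round rewriting computation. The only cosmetic difference is that the paper rewrites $R(\phi(u_1,u_2))\lc v_2\rc$ and $\lc u_1\rc R(\phi(u_2,v_2))$ separately (three $\to_\phi$ steps each) to the same nine-term expression, exhibiting $\downarrow_\phi$ directly, whereas you form the difference $D$, rewrite it to $0$, and then invoke Lemma~\mref{lem:red}(\mref{it:join}); the underlying cancellations are identical.
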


\begin{proof}
For the proof, we verify that the OPI satisfies the conditions in  Theorem~\ref{thm:sgoodtype} for the monomial order $\leq_\db$ defined in \mcite{GGSZ}. Let $\Pi_\phi=\Pi_{S_\phi(Z)}$ be the
\term-rewriting system from $\leq_\db$. With the order, we have
$$\bar{\phi(u_1, u_2)} = \lc u_1\rc \lc u_2\rc\, \text{ and }\,  \re{\phi(u_1,u_2)} = \lc u_1\lc u_2\rc\rc + \lc \lc u_1\rc u_2\rc + \lambda u_1u_2\,\text{ for }\, u_1, u_2\in \frakM(Z).$$
Since $\phi(u_1, u_2)$ has the same leading monomial as the one for Rota-Baxter type operators, by the same argument as for Example~\mref{ex:rbtype}, condition~(\mref{it:incl}) in Theorem~\mref{thm:sgoodtype} holds. Now we show that condition~(\mref{it:rb3}) is also fulfilled.
With notations in Example~\mref{ex:rbtype} and from Eq.~(\mref{eq:uv}), we have
$$R(\phi(u_1, u_2))c = ( \lc u_{1}\lc u_{2}\rc\rc  + \lc \lc u_{1}\rc u_{2}\rc +\lambda u_{1}u_{2}) \lc v_{2}\rc $$
and
$$ a R(\phi(v_1, v_2)) =\lc u_{1}\rc R(\phi(u_2, v_2)) = \lc u_{1}\rc (\lc u_{2}\lc v_{2}\rc\rc + \lc \lc u_{2}\rc v_{2}\rc +\lambda u_{2}v_{2}).$$
On the one hand, we have
\begin{align*}
&R(\phi(u_1, u_2))c \\
=&\ \lc u_{1}\lc u_{2}\rc\rc \lc v_{2}\rc + \lc \lc u_{1}\rc u_{2}\rc \lc v_{2}\rc +\lambda u_{1}u_{2}\lc v_{2}\rc = \lc u_{1}\lc u_{2}\rc\rc \lc v_{2}\rc \dps (\lc \lc u_{1}\rc u_{2}\rc \lc v_{2}\rc +\lambda u_{1}u_{2}\lc v_{2}\rc)\\
\to_\phi&\ \lc u_{1}\lc u_{2}\rc \lc v_{2}\rc\rc + \lc \lc u_{1}\lc u_{2}\rc\rc v_{2}\rc +\lambda u_{1}\lc u_{2}\rc v_{2} + \lc \lc u_{1}\rc u_{2}\rc \lc v_{2}\rc +\lambda u_{1}u_{2}\lc v_{2}\rc\\
=&\ \lc \lc u_{1}\rc u_{2}\rc \lc v_{2}\rc \dps( \lc u_{1}\lc u_{2}\rc \lc v_{2}\rc\rc + \lc \lc u_{1}\lc u_{2}\rc\rc v_{2}\rc +\lambda u_{1}\lc u_{2}\rc v_{2}+\lambda u_{1}u_{2}\lc v_{2}\rc)\\
\to_\phi&\ \lc \lc u_{1}\rc u_{2}\rc \lc v_{2}\rc \rc + \lc \lc \lc u_{1}\rc u_{2}\rc v_{2}\rc +\lambda \lc u_{1}\rc u_{2}v_{2} + \lc u_{1}\lc u_{2}\rc \lc v_{2}\rc\rc  \\
&\ + \lc \lc u_{1}\lc u_{2}\rc\rc v_{2}\rc +\lambda u_{1}\lc u_{2}\rc v_{2} +\lambda u_{1}u_{2}\lc v_{2}\rc\\
=&\ \lc u_{1}\lc u_{2}\rc \lc v_{2}\rc\rc \dps (\lc \lc u_{1}\rc u_{2}\rc \lc v_{2}\rc \rc + \lc \lc \lc u_{1}\rc u_{2}\rc v_{2}\rc +\lambda \lc u_{1}\rc u_{2}v_{2}  \\
&\ + \lc \lc u_{1}\lc u_{2}\rc\rc v_{2}\rc +\lambda u_{1}\lc u_{2}\rc v_{2} +\lambda u_{1}u_{2}\lc v_{2}\rc)\\
\to_{\phi}&\ \lc u_{1}\lc u_{2}\lc v_{2}\rc\rc\rc + \lc u_{1}\lc \lc u_{2}\rc v_{2}\rc\rc +\lambda\lc u_{1}u_{2}v_{2}\rc +\lc \lc u_{1}\rc u_{2}\rc \lc v_{2}\rc \rc + \lc \lc \lc u_{1}\rc u_{2}\rc v_{2}\rc   \\
&\ +\lambda \lc u_{1}\rc u_{2}v_{2} + \lc \lc u_{1}\lc u_{2}\rc\rc v_{2}\rc +\lambda u_{1}\lc u_{2}\rc v_{2} +\lambda u_{1}u_{2}\lc v_{2}\rc.
\end{align*}
On the other hand, we have
\begin{align*}
&a R(\phi(v_1, v_2)) \\
=&\ \lc u_{1}\rc \lc u_{2}\lc v_{2}\rc\rc + \lc u_{1}\rc \lc \lc u_{2}\rc v_{2}\rc +\lambda \lc u_{1}\rc u_{2}v_{2} = \lc u_{1}\rc \lc u_{2}\lc v_{2}\rc\rc \dps (\lc u_{1}\rc \lc \lc u_{2}\rc v_{2}\rc +\lambda \lc u_{1}\rc u_{2}v_{2})\\
\to_\phi&\ \lc u_{1}\lc u_{2}\lc v_{2}\rc\rc\rc + \lc \lc u_{1}\rc u_{2}\lc v_{2}\rc\rc +\lambda u_{1}u_{2}\lc v_{2}\rc + \lc u_{1}\rc \lc \lc u_{2}\rc v_{2}\rc +\lambda \lc u_{1}\rc u_{2}v_{2}\\
=&\ \lc u_{1}\rc \lc \lc u_{2}\rc v_{2}\rc \dps (\lc u_{1}\lc u_{2}\lc v_{2}\rc\rc\rc + \lc \lc u_{1}\rc u_{2}\lc v_{2}\rc\rc +\lambda u_{1}u_{2}\lc v_{2}\rc +\lambda \lc u_{1}\rc u_{2}v_{2})\\
\to_\phi&\ \lc u_{1}\lc\lc u_{2}\rc v_{2}\rc \rc + \lc \lc u_{1}\rc \lc u_{2}\rc v_{2}\rc +\lambda u_{1}\lc u_{2}\rc v_{2} + \lc u_{1}\lc u_{2}\lc v_{2}\rc\rc\rc \\
&\ + \lc \lc u_{1}\rc u_{2}\lc v_{2}\rc\rc +\lambda u_{1}u_{2}\lc v_{2}\rc +\lambda \lc u_{1}\rc u_{2}v_{2}\\
=&\ \lc \lc u_{1}\rc \lc u_{2}\rc v_{2}\rc \dps (\lc u_{1}\lc\lc u_{2}\rc v_{2}\rc \rc  +\lambda u_{1}\lc u_{2}\rc v_{2} + \lc u_{1}\lc u_{2}\lc v_{2}\rc\rc\rc \\
&\ + \lc \lc u_{1}\rc u_{2}\lc v_{2}\rc\rc +\lambda u_{1}u_{2}\lc v_{2}\rc +\lambda \lc u_{1}\rc u_{2}v_{2})\\
\to_\phi&\ \lc \lc u_{1}\lc u_{2}\rc \rc v_{2}\rc + \lc \lc \lc u_{1}\rc  u_{2}\rc v_{2}\rc
+\lambda \lc u_{1}u_{2}v_{2}\rc + \lc u_{1}\lc\lc u_{2}\rc v_{2}\rc \rc  +\lambda u_{1}\lc u_{2}\rc v_{2} \\
&\  + \lc u_{1}\lc u_{2}\lc v_{2}\rc\rc\rc + \lc \lc u_{1}\rc u_{2}\lc v_{2}\rc\rc +\lambda u_{1}u_{2}\lc v_{2}\rc +\lambda \lc u_{1}\rc u_{2}v_{2}.
\end{align*}
Hence
$$\re{\phi(a,b)}c \downarrow_\phi a\re{\phi(b,c)}$$
and so condition~(\mref{it:rb3}) is verified. This completes the proof.
\end{proof}

As a consequence, we obtain a construction of free modified Rota-Baxter algebras.
For a set $Z$, denote
\begin{equation*}
\rbw(Z): = \frakM(Z) \setminus \{ q|_{\lc u\rc \lc v\rc} \mid u,v\in \frakM(Z) \} =\frakM(Z) \setminus \{ q|_{\lbar{s}} \mid s\in S_\phi(Z) \}=:\Irr(S_\phi(Z)),
\end{equation*}
where $S_\phi(Z)$ is defined in Eq.~(\mref{eq:genphi}).

\begin{coro}
Let $Z$ be a set.
We have the following module isomorphism
$$ \bfk\frakM(Z)/\Id(S_\phi(Z)) \cong \bfk \rbw(Z).$$
More precisely,
$$ \bfk\frakM(Z)  = \Id(S_\phi(Z)) \oplus \bfk \rbw(Z)  .$$
\mlabel{thm:pbwbasis}
\end{coro}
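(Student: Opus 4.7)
The plan is to deduce this corollary as a direct consequence of Theorem~\ref{thm:mrb} together with the Composition-Diamond Lemma (Theorem~\ref{thm:cdl}) applied to the set $S_\phi(Z)$, where $\phi$ is the modified Rota-Baxter OPI.

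First, I would unwind the definitions. By Theorem~\ref{thm:mrb}, the modified Rota-Baxter OPI $\phi(x_1,x_2)$ is \gs, meaning that for every set $Z$ there exists a monomial order on $\frakM(Z)$ (namely the order $\leq_\db$ used in the proof of Theorem~\ref{thm:mrb}) with respect to which $S_\phi(Z)$ is a Gröbner-Shirshov basis in $\bfk\frakM(Z)$. This is precisely hypothesis~(\ref{it:cd1}) of Theorem~\ref{thm:cdl} applied to the subset $S = S_\phi(Z)$.

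Next, I would invoke Theorem~\ref{thm:cdl} directly: since $S_\phi(Z)$ is a Gröbner-Shirshov basis, the equivalent statement~(\ref{it:cd4}) yields the direct sum decomposition
\begin{equation*}
\bfk\frakM(Z) = \Id(S_\phi(Z)) \oplus \bfk\Irr(S_\phi(Z)),
\end{equation*}
and the canonical projection $\eta\colon \bfk\frakM(Z) \to \bfk\frakM(Z)/\Id(S_\phi(Z))$ restricts to a $\bfk$-linear bijection from $\bfk\Irr(S_\phi(Z))$ onto the quotient.

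Finally, I would verify the bookkeeping identification $\rbw(Z) = \Irr(S_\phi(Z))$. With respect to $\leq_\db$, the leading monomial of each generator $\phi(u_1,u_2)\in S_\phi(Z)$ is $\lc u_1\rc\lc u_2\rc$, as noted in the proof of Theorem~\ref{thm:mrb}. Hence
\begin{equation*}
\{\, q|_{\lbar{s}} \mid s\in S_\phi(Z)\,\} = \{\, q|_{\lc u\rc\lc v\rc} \mid u,v\in \frakM(Z), q\in \frakM^\star(Z)\,\},
\end{equation*}
which exactly matches the defining set for $\rbw(Z)$ given just before the corollary. Substituting $\rbw(Z)$ for $\Irr(S_\phi(Z))$ in the decomposition yields both the direct sum and the module isomorphism in the statement. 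There is no genuine obstacle here; the only thing to be careful about is matching the defining expression for $\rbw(Z)$ with $\Irr(S_\phi(Z))$, which is immediate from the explicit form of the leading monomial under $\leq_\db$.
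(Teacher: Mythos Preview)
Your proposal is correct and follows exactly the same approach as the paper, which simply cites Theorems~\ref{thm:cdl} and~\ref{thm:mrb}. Your additional unpacking of why $\rbw(Z)=\Irr(S_\phi(Z))$ via the leading monomials under $\leq_\db$ is accurate and merely spells out what the paper leaves implicit in the display preceding the corollary.
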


\begin{proof}
This follows from Theorems~\ref{thm:cdl} and~\ref{thm:mrb}.
\end{proof}

\subsection{The proof of Theorem~\ref{thm:sgoodtype}}
\label{ss:proof}

Before starting the proof of Theorem~\ref{thm:sgoodtype}, we recall the following concepts~\cite{ZheG}.

\begin{defn} Let $Z$ be a set.
The particular location of the subword $u$ in the word
$w$ under the substitution $q|_u$ is called the {\bf  placement of
$u$ in $w$ by $q$}, denoted by $(u,q)$ for distinction.
\end{defn}

 A subword $u$ may appear at multiple locations (and hence have distinct placements using distinct $q$'s) in a bracketed word $w$. For example, there are two placements of $x$ in $w = x\lc x\rc\in \frakM(x)$, given by $(x,q_1)$ and $(x,q_2)$ where $q_1=\star\lc x\rc$ and $q_2=x\lc \star\rc$.

\begin{defn}
Let $Z$ be a set and $w\in \mapm{Z}$ such that
\begin{equation*}
\stars{q_1}{u_1}=w=\stars{q_2}{u_2} \, \text{ for some }\,u_1, u_2\in \mapm{Z}, q_1, q_2\in \frakM^\star(Z).
\end{equation*}
\noindent
The two \plas $(u_1,q_1)$ and $(u_2,q_2)$ are called
\begin{enumerate}
\item
{\bf  separated} if there exist $p \in \frakM^{\star_1,\star_2}(Z)$ and $a,b \in \mapm{Z}$ such that $\stars{q_1}{\star_1}=p|_{\star_1,\,b}$, $\stars{q_2}{\star_2}=p|_{a,\,\star_2}$, and $w=p|_{a,\,b}$;
\label{item:bsep}
\item
{\bf  nested} if there exists $q \in \frakM^{\star}(Z)$ such that either $q_2=\stars{q_1}q$ or $q_1=\stars{q_2}q$;
\label{item:bnes}
\item
 {\bf  intersecting} if there exist $q \in \frakM^{\star}(Z)$ and  $a, b, c \in \frakM(Z)\backslash\{1\}$ such that $w=q|_{abc}$ and either
\begin{enumerate}
\item
$ q_1=q|_{\star c}$ and $q_2=q|_{a\star}$; or
\item
$q_1=q|_{a\star}$ and $q_2=q|_{\star c}$.
\end{enumerate}
\label{item:bint}
\end{enumerate}
\label{defn:bwrel}
\end{defn}

\begin{prop}~{\rm (\cite[Theorem~4.11]{ZheG})}
Let $Z$ be a set and $w\in\frakM(Z)$. Any two \plas $(u_1, q_1)$ and $(u_2,q_2)$ in $w$ are either separated or nested or intersecting.
\label{pp:thrrel}
\end{prop}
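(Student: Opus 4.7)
The plan is to prove the trichotomy by structural induction on $w \in \frakM(Z)$, using the stratification $\frakM(Z) = \bigcup_{n \geq 0} \frakM_n(Z)$ so that the induction runs on the minimal $n$ with $w \in \frakM_n(Z)$. The base case $n=0$ is immediate: if $w$ is an element of $M(Z)$ (no brackets), the analysis reduces to intervals of letters, and the three relations for words over $Z$ are classical.

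Before the induction step, I would establish a structural lemma: every $w \in \frakM(Z)$ decomposes uniquely as $w = w_1 w_2 \cdots w_m$ with each $w_i \in Z \cup \lc \frakM(Z)\rc$, and every placement $(u, q)$ with $\stars{q}{u} = w$ has exactly one of the following forms. \emph{Top-level type:} $u = w_a w_{a+1} \cdots w_b$ for some $1 \leq a \leq b \leq m$, with $q = w_1 \cdots w_{a-1}\, \star\, w_{b+1} \cdots w_m$. \emph{Internal type:} there is a unique index $k$ with $w_k = \lc v_k\rc$ and a placement $(u, q')$ in $v_k$, with $q = w_1 \cdots w_{k-1}\, \lc q'\rc\, w_{k+1} \cdots w_m$. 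This dichotomy follows from the uniqueness of the free-monoid decomposition on $Z \cup \lc \frakM(Z)\rc$ and from Lemma~\ref{lem:mstars}, which guarantees that the position of $\star$ in $q$ is recoverable from $w$ and $u$.

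Given $(u_1, q_1)$ and $(u_2, q_2)$, I would then split into three combinations. If both are top-level with intervals $[a_1, b_1]$ and $[a_2, b_2]$, then disjoint intervals give \emph{separated} (the witness $p \in \frakM^{\star_1, \star_2}(Z)$ is obtained by inserting $\star_1, \star_2$ at the two index ranges); one interval contained in the other gives \emph{nested}; and strictly overlapping intervals give \emph{intersecting}, with the $a, b, c$ of Definition~\ref{defn:bwrel}(\ref{item:bint}) read off from the overlap pattern $w_{a_1} \cdots w_{a_2-1}$, $w_{a_2} \cdots w_{b_1}$, $w_{b_1+1} \cdots w_{b_2}$. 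If one placement is top-level with interval $[a_1, b_1]$ and the other is internal inside $w_k = \lc v_k\rc$, then $k \notin [a_1, b_1]$ yields separated and $k \in [a_1, b_1]$ yields nested; crucially the intersecting case cannot occur, because the matched brackets of $w_k$ form a rigid boundary that a top-level subword cannot straddle partially. If both are internal, then placements inside different $w_{k_1}, w_{k_2}$ with $k_1 \neq k_2$ are separated, while placements inside the same $w_k = \lc v_k\rc$ reduce to the same problem for $v_k \in \frakM_{n-1}(Z)$, where the induction hypothesis applies; the resulting relation lifts to $w$ by wrapping the witnessing $p$ or $q$ with $w_1 \cdots w_{k-1}\, \lc \cdot \rc\, w_{k+1} \cdots w_m$.

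The main obstacle I anticipate is not the combinatorics of the trichotomy but the bookkeeping needed to exhibit the explicit witnesses $p \in \frakM^{\star_1, \star_2}(Z)$ and $q \in \frakM^\star(Z)$ demanded by Definition~\ref{defn:bwrel} in every sub-case and to verify mutual exclusivity. The decomposition lemma and Lemma~\ref{lem:mstars} are what make the case split exhaustive and non-overlapping: once the position data $[a_i, b_i]$ or the bracket-index $k_i$ attached to each placement is shown to be uniquely determined by $(u_i, q_i)$, each sub-case is a direct unpacking of Definition~\ref{defn:bwrel}, and the "brackets as rigid boundary" observation rules out the pathological top-level/internal intersecting case.
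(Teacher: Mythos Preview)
The paper does not give its own proof of this proposition: it is quoted from \cite[Theorem~4.11]{ZheG} and invoked as a black box in the proof of Theorem~\ref{thm:sgoodtype}. There is therefore no in-paper argument to compare your proposal against.

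Your outline is nonetheless sound and is the natural structural-induction route to the result. The unique factorization $w = w_1 \cdots w_m$ in the free monoid $M(Z \cup \lc \frakM(Z)\rc)$ and the top-level/internal dichotomy for placements give exactly the case split you describe, and each sub-case resolves as you indicate; in particular your observation that a top-level placement and an internal placement can never be intersecting (the bracket pair of $w_k$ is an indivisible letter at the top level) is the key point that keeps the case analysis finite. One small cleanup: you cite Lemma~\ref{lem:mstars} to argue that the position data attached to a placement are uniquely determined, but that lemma as stated in this paper carries a monomial-order hypothesis which Proposition~\ref{pp:thrrel} does not assume. You do not actually need it: since a placement is already the pair $(u,q)$ with $q$ given, the top-level/internal type and the indices $[a,b]$ or $k$ are read directly off the unique free-monoid decomposition of $q$ itself in $M\big((Z \cup \{\star\}) \cup \lc \frakM(Z^\star)\rc\big)$, with no appeal to an order. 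Drop the reference to Lemma~\ref{lem:mstars} and the sketch stands.
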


Now we are ready for the proof of Theorem~\mref{thm:sgoodtype}.

\begin{proof}[Proof of Theorem~\mref{thm:sgoodtype}]
Let $\phi=\phi(\frakx)$ and
$\Pi_\phi$ the \term-rewriting system from $\leq$. We prove the result by showing that $\phi$ is \gs with respect to $\leq$.
By Theorem~\mref{thm:sgoodtr}~(\mref{it:loiff}), it suffices to prove that $\phi$ is convergent on $Z$ with respect to the orientation from $\leq$, that is, $\Pi_\phi$ is convergent by Definition~\mref{def:cwpp}~(\mref{it:pwrtp}).

Since $\leq$ is a monomial order on $\frakM(Z)$, $\Pi_\phi$ is terminating by Lemma~\mref{lem:terminating}. From Lemma~\mref{lem:newman}, we are left to show that $\Pi_\phi$ is locally confluent.
Since
$$\bar{R(\phi(\fraku))} < \bar{\phi(\fraku)}\,\text{ and }\, \stars{q}{\bar{\re{\phi(\fraku)}}}<  \stars{q}{\bar{\phi(\fraku)}}\,\text{ for } q\in \frakM^\star(Z), \phi(\fraku)\in S_\phi(Z),$$
$\Pi_\phi$ is compatible with $\leq$.
Using Lemma~\mref{lem:ltcon}, it suffices to show $\Pi_\phi$ is locally base-confluent, that is, for any local base-fork
$(dw\to_{\phi} dv_1, dw\to_{\phi} dv_2)$, we have $dv_1 - dv_2 \astarrow_\phi 0$. Suppose to the contrary that $\Pi_\phi$ is not locally base-confluent. Then $\frakC \neq \emptyset$, where
$$
\frakC = \left\{ w\in \frakM(Z) \left | \begin{array}{l}\text{ there is a local fork base-fork }(d w\to_{\phi} d v_1, d w\to_{\phi} dv_2) \\
\text{ for some $d\in \nbfk$ such that } dv_1 - dv_2 \not\astarrow_\phi 0\end{array} \right. \right \}.
$$
Since $\leq$ is a well-order, $\frakC$ has the least element with respect to $\leq$, say $w$.
Thus there are some $q_1, q_2\in \frakM^\star(Z)$, $\fraku, \frakv\in \frakM(Z)^k$ and $d\in \nbfk$ such that
\begin{equation}
\begin{aligned}
w &= \stars{q_1}{\bar{\phi(\fraku)}} =  \stars{q_2}{\bar{\phi(\frakv)}} \in \frakM(Z),\,
d w \to_{\phi} d\stars{q_1}{\re{\phi(\fraku)}},\\
dw &\to_{\phi} d\stars{q_2}{\re{\phi(\frakv)}},\, \text{ and }\, d\stars{q_1}{\re{\phi(\fraku)}} - d\stars{q_2}{\re{\phi(\frakv)}}\not\astarrow_\phi 0.
\mlabel{eq:contra}
\end{aligned}
\end{equation}
Let
$$Y:= \{u\in \frakM(Z) \mid u < w\}\,\text{ and }\, \Pi_{\bfk Y} =\Pi_\phi \cap (Y\times \bfk Y).$$
Since $\leq$ is a monomial order, we have
\begin{equation}
\bar{\stars{q_1}{\re{\phi(\fraku)}}} =  \stars{q_1}{\bar{\re{\phi(\fraku)}}}<  \stars{q_1}{\bar{\phi(\fraku)}} = w, \quad \, \bar{ \stars{q_2}{\re{\phi(\frakv)}}}  =  \stars{q_2}{\bar{\re{\phi(\frakv)}}}<  \stars{q_2}{\bar{\phi(\frakv)}} = w
\mlabel{eq:less}
\end{equation}
and
\begin{equation}
\stars{q_1}{\re{\phi(\fraku)}},\, \stars{q_2}{\re{\phi(\frakv)}}\in \bfk Y.
\mlabel{eq:less1}
\end{equation}
So $Y\neq \emptyset$. For any $f\astarrow_\phi g$ with $f\in \bfk Y$, since $\leq$ is compatible with $\Pi_\phi$, we get $\bar{g}\leq \bar{f}$ and so $g\in \bfk Y$. Thus $\Pi_{\bfk Y}$ is closed under $\Pi_\phi$. By Proposition~\mref{pp:subre}, we conclude that $\Pi_{\bfk Y} \leq \Pi_\phi$ is a sub-\term-rewriting system of $\Pi_\phi$. For any local base-fork $(ey\to_{\Pi_{\bfk Y}} ev_1, ey \to_{\Pi_{\bfk Y}} e v_2)$ of $\Pi_{{\bfk Y}}$ with $e\in \nbfk$, $y\in Y$ and $v_1, v_2\in \bfk Y$, it induces a local base-fork $(ey\to_{\phi} ev_1, ey \to_{\phi} e v_2)$ of $\Pi_\phi$. Since $y\in Y$, we have $y<w$ and $y\notin \frakC$ by the minimality of $w$. So $ev_1 - ev_2\astarrow_{\phi} 0$ by the definition of $\frakC$. Since $\Pi_{\bfk Y}\leq \Pi_\phi$ and $ev_1 - ev_2\in \bfk Y$, we have $ev_1 - ev_2\astarrow_{\Pi_{\bfk Y}} 0$. Thus $\Pi_{\bfk Y}$ is locally base-confluent and so is confluent by Lemmas~\mref{lem:newman} and~\mref{lem:ltcon}.

Since $\phi(\frakx)$ is multi-linear, we may write
$$\re{\phi(\frakx)} = \sum_{t=1}^m r_t\stars{p_t}{\frakx}:= \sum_{t=1}^m r_t \stars{p_t}{x_1, \ldots, x_k},$$
where $r_t\in \nbfk$, $p_t\in \frakM^{\star k} (Z)$ and $\stars{p_t}{x_1, \ldots, x_k}$, $1\leq t\leq m$,
are mutually distinct monomials.
Then
\begin{equation}
\begin{aligned}
\re{\phi(\fraku)} &= \sum_{t=1}^m r_t  \stars{p_t}{\fraku} := \sum_{t=1}^m r_t  \stars{p_t}{u_1, \ldots, u_k},\\
\re{\phi(\frakv)} &= \sum_{t=1}^m r_t  \stars{p_t}{\frakv} := \sum_{t=1}^m r_t  \stars{p_t}{v_1, \ldots, v_k},
\end{aligned}
\mlabel{eq:sumuv}
\end{equation}
and by Eq.~(\mref{eq:less1}),
\begin{equation}
\stars{q_1}{\stars{p_t}{\fraku}},\, \stars{q_1}{\stars{p_t}{\frakv}} \in \bfk Y \,\text{ for } 1\leq t\leq m.
\mlabel{eq:uvless}
\end{equation}
By Proposition~\ref{pp:thrrel},  these two \plas $(\bar{\phi(\fraku)},q_1)$ and $(\bar{\phi(\frakv}),q_2)$ in $w$ have three possible relative locations.

\smallskip

\noindent {\bf Case I: Separate placements}. By Definition~\ref{defn:bwrel}, there exists $p\in \frakM^{\star_1,\star_2}(Z)$ such that
$$\stars{q_1}{\star_1}=p|_{\star_1,\,\bar{\phi(\frakv)}}\,\text{ and }\,
\stars{q_2}{\star_2}=p|_{\bar{\phi(\fraku)},\,\star_2}.
$$
So
\begin{align}
\bfk Y \ni \stars{q_1}{\re{\phi(\fraku)}} =  p|_{\re{\phi(\fraku)},\,\bar{\phi(\frakv)}}
= \sum_{t=1}^m   r_t p|_{\stars{p_t}{\fraku},\,\bar{\phi(\frakv)}},
\mlabel{eq:seq0}
\end{align}
where the last step employs Eq.~(\mref{eq:sumuv}). For each $1\leq t\leq m$,
$$ p|_{\stars{p_t}{\fraku},\,\bar{\phi(\frakv)}} \to_{\Pi_{\bfk Y}} p|_{\stars{p_t}{\fraku},\,\re{\phi(\frakv)}}\,\text{ and so }\, p|_{\stars{p_t}{\fraku},\,\bar{\phi(\frakv)}} \downarrow_{\Pi_{\bfk Y}} p|_{\stars{p_t}{\fraku},\,\re{\phi(\frakv)}}.$$
Since $\Pi_{\bfk Y}$ is confluent,
\begin{align}
\stars{q_1}{\re{\phi(\fraku)}}  =  \sum_{t=1}^m  r_t p|_{\stars{p_t}{\fraku},\,\bar{\phi(\frakv)}} \downarrow_{\Pi_{\bfk Y}} \sum_{t=1}^m  r_t p|_{\stars{p_t}{\fraku},\,\re{\phi(\frakv)}} = \sum_{t,s=1}^m r_t r_s p|_{\stars{p_t}{\fraku},\,\stars{p_s}{\frakv}},
\mlabel{eq:seq1}
\end{align}
where the first equation follows from Eq.~(\mref{eq:seq0}), the confluence step from Lemmas~\mref{lem:cons} and~\mref{lem:red}(\mref{it:plus}), and the next equation from Eq.~(\mref{eq:sumuv}).
On the other hand,
\begin{align}
\stars{q_2}{\re{\phi(\frakv)}} = p|_{\bar{\phi(\fraku)},\, \re{\phi(\frakv)} }
=\sum_{s=1}^m  r_s p|_{\bar{\phi(\fraku)},\, \stars{p_s}{\frakv} }
\downarrow_{\Pi_{\bfk Y}} \sum_{s=1}^m  r_s p|_{\re{\phi(\fraku)},\, \stars{p_s}{\frakv} }
=\sum_{t,s=1}^m r_t r_s p|_{\stars{p_t}{\fraku},\,\stars{p_s}{\frakv}}.
\mlabel{eq:seq2}
\end{align}
Since $\Pi_{\bfk Y}$ is confluent, by Lemma~\mref{lem:red}(\mref{it:trans}), Eqs.~(\ref{eq:seq1}) and (\ref{eq:seq2}) we obtain
$$\stars{q_1}{\re{\phi(\fraku)}} \downarrow_{\Pi_{\bfk Y}} \stars{q_2}{\re{\phi(\frakv)}}.$$
Then it follows from Eq.~(\ref{eq:less1}) and Remark~\mref{re:tozero} that
$$  \stars{q_1}{\re{\phi(\fraku)}}-  \stars{q_2}{\re{\phi(\frakv)}}  \astarrow_{\Pi_{\bfk Y}} 0.
$$
By $\Pi_{\bfk Y} \leq \Pi_\phi$ being a sub-term-rewriting system and Lemma~\mref{lem:cons}, we have
$$\stars{q_1}{\re{\phi(\fraku)}}-  \stars{q_2}{\re{\phi(\frakv)}}  \astarrow_{\phi} 0\, \text{ and }\, d\stars{q_1}{\re{\phi(\fraku)}}-  d\stars{q_2}{\re{\phi(\frakv)}}  \astarrow_{\phi} 0,$$
contradicting Eq.~(\mref{eq:contra}).

\smallskip

\noindent {\bf Case II: Intersecting placements}.
By the symmetry of (i) and (ii) in Item~(\ref{item:bint}) of Definition~\ref{defn:bwrel}, we may assume that Item~(\ref{item:bint})\,(i) holds and hence $q_1 \ne q_2$. So there exist $q \in \frakM^{\star}(Z)$ and  $a, b, c \in \frakM(Z)\backslash\{1\}$ such that $w=q|_{abc}$,
$q_1=q|_{\star c}$ and $q_2=q|_{a\star}$. Then
$$\stars{q_1}{\re{\phi(\fraku)}} = q|_{\re{\phi(\fraku)} c}\, \text{ and }\, \stars{q_2}{\re{\phi(\frakv)}} = q|_{a \re{\phi(\frakv)} }.$$
So from Eq.~(\mref{eq:less}),
$$ \bar{q|_{\re{\phi(\fraku)} c}} = \bar{\stars{q_1}{\re{\phi(\fraku)}}}< w\, \text{ and }\,
\bar{q|_{a \re{\phi(\frakv)} }} = \bar{ \stars{q_2}{\re{\phi(\frakv)}}} < w.$$
This implies that
$$ q|_{\re{\phi(\fraku)} c},  q|_{a \re{\phi(\frakv)} }\in \bfk Y\, \text{ and }\, \re{\phi(\fraku)} c, a \re{\phi(\frakv)}\in \bfk Y.$$
Together with $\re{\phi(\fraku)} c \downarrow_\phi a \re{\phi(\frakv)}$ and Theorem~\mref{thm:sgoodtype}.(\mref{it:rb3}), we have
$$\re{\phi(\fraku)} c \downarrow_{\Pi_{\bfk Y}} a \re{\phi(\frakv)}\, \text{ and }\, \re{\phi(\fraku)} c - a \re{\phi(\frakv)} \astarrow_{\Pi_{\bfk Y}} 0,$$
where the last step employs the fact that $\Pi_{\bfk Y} \leq \Pi_\phi$ is confluent and Remark~\mref{re:tozero}.
Thus
$$q|_{\re{\phi(\fraku)} c} - q|_{a \re{\phi(\frakv)} } = q|_{\re{\phi(\fraku)} c - a \re{\phi(\frakv)} } \astarrow_{\Pi_{\bfk Y}} 0,
\,\text{ that is, }\, \stars{q_1}{\re{\phi(\fraku)}} -  \stars{q_2}{\re{\phi(\frakv)}} \astarrow_{\Pi_{\bfk Y}} 0.$$
By $\Pi_{\bfk Y}\leq \Pi_\phi$ and Lemma~\mref{lem:cons},
$$\stars{q_1}{\re{\phi(\fraku)}} -  \stars{q_2}{\re{\phi(\frakv)}} \astarrow_{\phi} 0\,\text{ and }\, d\stars{q_1}{\re{\phi(\fraku)}} -  d\stars{q_2}{\re{\phi(\frakv)}} \astarrow_{\phi} 0,$$
contradicting Eq.~(\mref{eq:contra}).

\smallskip

\noindent {\bf Case III: Nested placements}.
By symmetry, we may suppose that there is $q \in \frakM^\star(Z)$ such that $\stars{q_1}q = q_2$.
Let us first consider $q = \star$.
Then $q_1 = q_2$. Since $ \stars{q_1}{\bar{\phi(\fraku)}} = \stars{q_2}{\bar{\phi(\frakv)}}$, by Lemma~\mref{lem:mstars}, we get $\bar{\phi(\fraku)} = \bar{\phi(\frakv)}$.
Taking $a=c=1$ and $b= \lbar{\phi(\fraku)}=\lbar{\phi(\frakv)}$ in Theorem~\mref{thm:sgoodtype}~(\mref{it:rb3}), we have
$$\re{\phi(\fraku)} \downarrow_\phi \re{\phi(\frakv)},\, \stars{q_1}{\re{\phi(\fraku)}} \downarrow_\phi \stars{q_2}{\re{\phi(\frakv)}} \,\text{ and }\, \stars{q_1}{\re{\phi(\fraku)}} \downarrow_{\Pi_{\bfk Y}} \stars{q_2}{\re{\phi(\frakv)}},$$
where the second confluence follows from $q_1 = q_2$ and the last confluence from Eq.~(\mref{eq:less1}).
Since $\Pi_{\bfk Y}$ is confluent, it follows from Remark~\mref{re:tozero} and Lemma~\mref{lem:cons} that
$$\stars{q_1}{\re{\phi(\fraku)}}- \stars{q_2}{\re{\phi(\frakv)}} \astarrow_{\Pi_{\bfk Y}} 0\,
\text{ and }\, d\stars{q_1}{\re{\phi(\fraku)}}- d\stars{q_2}{\re{\phi(\frakv)}} \astarrow_{\Pi_{\bfk Y}} 0,$$
contradicting Eq.~(\mref{eq:contra}).

Consider next $q \ne \star$. So $q_1\neq q_2$ and $\bar{\phi(\fraku)} \ne \bar{\phi(\frakv)}$. From
$$\stars{q_1}{\bar{\phi(\fraku)}}
= \stars{q_2}{\bar{\phi(\frakv)}}=  \stars{ q_1}{\stars{q}{\bar{\phi(\frakv)}}},$$
we have $\bar{\phi(\fraku)} = q|_{\bar{\phi(\frakv)}}$ by Lemma~\mref{lem:mstars}. Using Theorem~\mref{thm:sgoodtype}(\mref{it:incl}),
there are some $u_i$ with $1\leq i\leq k$ and $q'\in \frakM^\star(Z)$ such that $u_i = \stars{q'}{\bar{\phi(\frakv)}}$. Write
\begin{equation}
\bar{\phi(\fraku)} = \stars{p}{u_1, \ldots, u_k}\, \text{ for some } p\in \frakM^\star(Z).
\mlabel{eq:pbar}
\end{equation}
Then
\begin{align*}
\stars{q}{\bar{\phi(\frakv)}} = \bar{\phi(\fraku)} =  \stars{p}{u_1, \ldots, u_k}
= \stars{p}{u_1, \ldots, u_{i-1},\; \stars{q'}{\bar{\phi(\frakv)}},\; u_{i+1}, \ldots,  u_k}
= \stars{(\stars{p}{u_1, \ldots, u_{i-1},q', u_{i+1}, \ldots,  u_k})}{\bar{\phi(\frakv)}}.
\end{align*}
This implies that
\begin{equation}
q = \stars{p}{u_1, \ldots, u_{i-1},\; q',\; u_{i+1}, \ldots,  u_k} = \bar{\phi(u_1, \ldots, u_{i-1}, q', u_{i+1}, \ldots, u_k)},
\mlabel{eq:qq}
\end{equation}
where the second step employs Eq.~(\mref{eq:pbar}).
From Eq.~(\mref{eq:sumuv}), we may write
\begin{equation}
\stars{q_1}{\re{\phi(\fraku)}} = \sum_{t=1}^m r_t \stars{q_1}{\stars{p_t}{\fraku}},
\mlabel{eq:qu}
\end{equation}
where \begin{align}
\fraku = (u_1, \ldots, u_k) = (u_1, \cdots, u_{i-1}, \stars{q'}{\bar{\phi(\frakv)}}, u_{i+1}, \ldots, u_k).
\mlabel{eq:equ}
\end{align}
Write
\begin{equation}
\fraku' = (u_1, \ldots, u_{i-1}, q'|_{\re{\phi(\frakv)}}, u_{i+1}, \ldots, u_k)\,\text{ and }\, \fraku'_{s}= (u_1, \ldots, u_{i-1},\stars{q'}{\stars{p_s}{\frakv}}, u_{i+1}, \ldots, u_k)
\mlabel{eq:uprime}
\end{equation}
for $1\leq s\leq m$. Then
$$\stars{q_1}{\stars{p_t}{\fraku}} \to_{\Pi_{\bfk Y}} \stars{q_1}{\stars{p_t}{\fraku'}} = \sum_{s=1}^m r_s \stars{q_1}{\stars{p_t}{\fraku'_{s} }} \, \text{ for } \, 1\leq t\leq m,$$
where the first rewriting step follows from Eqs.~(\mref{eq:uvless}) and~(\mref{eq:equ}), and the equation from Eq.~(\mref{eq:sumuv}).
This implies that
\begin{equation*}
\stars{q_1}{\stars{p_t}{\fraku}} \downarrow_{\Pi_{\bfk Y}} \sum_{s=1}^m r_s \stars{q_1}{\stars{p_t}{\fraku'_{s} }} \, \text{ for }\, 1\leq t\leq m.
\end{equation*}
By Lemmas~\mref{lem:cons} and~\mref{lem:red}(\mref{it:trans}),
\begin{equation}
\sum_{t=1}^m  r_t \stars{q_1}{\stars{p_t}{\fraku}} \downarrow_{\Pi_{\bfk Y}} \sum_{t, s=1}^m r_tr_s\, \stars{q_1}{\stars{p_t}{\fraku'_{s} }}.
\mlabel{eq:nest1}
\end{equation}
On the other hand,
\begin{equation}
\begin{aligned}
& \stars{q_2}{\re{\phi(\frakv)}}  = \stars{q_1}{q|_{\re{\phi(\frakv)}}} = \sum_{s=1}^m r_s
\stars{q_1}{q|_{p_s|_v}}
=\sum_{s=1}^m r_s \stars{q_1}{\bar{\phi(u_1, \cdots, u_{i-1},\, \stars{q'}{\stars{p_s}{\frakv}},\, u_{i+1}, \cdots, u_k) }} = \sum_{s=1}^m r_s \stars{q_1}{\bar{\phi(\fraku'_s) }},
\end{aligned}
\mlabel{eq:qv}
\end{equation}
where the first equation follows from $q_2= \stars{q_1}{q}$, the second from Eq.~(\mref{eq:sumuv}), the third from Eq.~(\mref{eq:qq}) and the fourth from Eq.~(\mref{eq:uprime}).
Since $\leq$ is a monomial order and $\stars{p_s}{\frakv} < \bar{\phi(v)}$, we have
$$\stars{q'}{\stars{p_s}{\frakv}} < \stars{q'}{\bar{\phi(v)}} \,\text{ and }\, \bar{\phi(\fraku'_s)} < \bar{\phi(\fraku)} \, \text{ for } \, 1\leq s\leq m,$$
where the second inequality employs Eqs~(\mref{eq:equ}) and~(\mref{eq:uprime}). This implies that
\begin{align*}
\stars{q_1}{\bar{\phi(\fraku'_s)}} < \stars{q_1}{\bar{\phi(\fraku)}} = w\, \text{ and}\, \stars{q_1}{\bar{\phi(\fraku'_s)}}\in \bfk Y \, \text{ for } \, 1\leq s\leq m.
\end{align*}
So
$$ \stars{q_1}{\bar{\phi(\fraku'_s) }}
\to_{\Pi_{\bfk Y}} \stars{q_1}{\re{\phi(\fraku'_s)}} = \sum_{t=1}^m r_t \stars{q_1}{\stars{p_t}{\fraku'_s}} \, \text{ for } \, 1\leq s\leq m$$
and
\begin{equation*}
\stars{q_1}{\bar{\phi(\fraku'_s) }} \downarrow_{\Pi_{\bfk Y}} \sum_{t=1}^m r_t \stars{q_1}{\stars{p_t}{\fraku'_s}}\, \text{ for }\, 1\leq s \leq m.
\end{equation*}
Again applying Lemmas~\mref{lem:cons} and~\mref{lem:red}(\mref{it:trans}), we have
\begin{equation}
\sum_{s=1}^m r_s \stars{q_1}{\bar{\phi(\fraku'_s) }} \downarrow_{\Pi_{\bfk Y}} \sum_{t,s =1}^m r_t r_s \stars{q_1}{\stars{p_t}{\fraku'_s}}.
\mlabel{eq:nest2}
\end{equation}
Since $\Pi_{\bfk Y}$ is confluent, by Lemma~\mref{lem:red}~(\mref{it:trans}), Eqs.~(\mref{eq:nest1}) and~(\mref{eq:nest2}) we obtain
$$\sum_{t=1}^m  r_t \stars{q_1}{\stars{p_t}{\fraku}}  \downarrow_{\Pi_{\bfk Y}} \sum_{s=1}^m r_s \stars{q_1}{\bar{\phi(\fraku'_s) }}.$$
Then Remark~\mref{re:tozero} yields,
$$ \sum_{t=1}^m r_t \stars{q_1}{\stars{p_t}{\fraku}} -\sum_{s=1}^m  r_s \stars{q_1}{\bar{\phi(\fraku'_s) }} \astarrow_{\Pi_{\bfk Y}} 0 .$$
By Eqs.~(\mref{eq:qu}) and~(\mref{eq:qv}), this is equivalent to
$$\stars{q_1}{\re{\phi(\fraku)}}  - \stars{q_2}{\re{\phi(\frakv)}}  \astarrow_{\Pi_{\bfk Y}} 0.$$
Hence from Lemma~\mref{lem:cons} and $\Pi_{\bfk Y}\leq \Pi_\phi$, we conclude
$$d\stars{q_1}{\re{\phi(\fraku)}}  - d\stars{q_2}{\re{\phi(\frakv)}}  \astarrow_{\Pi_{\bfk Y}} 0\, \text{ and }\, d\stars{q_1}{\re{\phi(\fraku)}}  - d\stars{q_2}{\re{\phi(\frakv)}}  \astarrow_{\phi} 0,$$
contradicting Eq.~(\mref{eq:contra}).

Thus $\frakC \neq \emptyset$ leads to contradiction in all possible cases. This completes the proof of Theorem~\ref{thm:sgoodtype}.
\end{proof}

\smallskip

\noindent {\bf Acknowledgements}: This work was supported by the and the National Science Foundation of US (Grant No. DMS~1001855) and National Natural Science Foundation of China (Grant No. 11201201, 11371177 and 11371178).

\end{document}